\documentclass[12pt]{article}
\usepackage[bookmarks=false, backref=page]{hyperref}
\usepackage[left=2.00cm, right=2.00cm, top=2.50cm, bottom=2.50cm]{geometry}
\usepackage{setspace}
\usepackage{abstract}
\usepackage{cite}
\usepackage{amsthm}
\usepackage{amsmath}
\usepackage{amsfonts}
\usepackage{amssymb}
\usepackage{mathrsfs}
\usepackage{color}
\usepackage{graphicx}
\usepackage{multirow}
\usepackage{booktabs}
\usepackage{indentfirst}
\usepackage{arydshln}
\usepackage{hyperref}
\allowdisplaybreaks[4]
\numberwithin{equation}{section}
\usepackage{authblk}
\makeatletter

\newcommand{\Rmnum}[1]{\expandafter\@slowromancap\romannumeral #1@}
\makeatother

\newtheorem{theorem}{Theorem}[section]

\newtheorem{lemma}[theorem]{Lemma}

\newtheorem{proposition}[theorem]{Proposition}
\newtheorem{remark}{Remark}

    \newcommand{\BQ}{{\mathbb {Q}}} \newcommand{\BR}{{\mathbb {R}}}

     \newcommand{\BZ}{{\mathbb {Z}}}

    \newcommand{\rank}{{\mathrm{rank}}}
\newcommand{\sgn}{{\mathrm{sgn}}}

    \DeclareFontFamily{U}{wncy}{}
\DeclareFontShape{U}{wncy}{m}{n}{<->wncyr10}{}
\DeclareSymbolFont{mcy}{U}{wncy}{m}{n}
\DeclareMathSymbol{\Sha}{\mathord}{mcy}{"58}

\author{Tong Wei and Shuai Zhai}
\title{On the coefficients of the Taylor expansion of $L$-functions of elliptic curves}

\begin{document}
\date{}
\maketitle

\vspace{-1cm}

\begin{abstract}
In this paper, we investigate the coefficients of the Taylor expansion of the complex $L$-series of any elliptic curve over $\BQ$. We prove that, in the family of quadratic twists by all the discriminants $d$, these coefficients are nonvanishing under GRH when $d$ is sufficiently large. Unconditionally, we obtain a general lower bound for the number of nonvanishing coefficients in the family of quadratic twists, through a series of results from the moments of the central values of the derivatives of quadratic twists of modular $L$-function.
\end{abstract}

\section{Introduction}

Let $E$ be an elliptic curve defined over $\BQ$ with conductor $q$. Let $L(E, s)$ be the complex $L$-series of $E$. The $L$-function $L(E, s)$ is known to possess a holomorphic continuation to the whole complex plane. The CM case follows from the work of Hecke and Tate, while the general case for elliptic curves over $\mathbb{Q}$ was established by Wiles \cite{Wiles}, Taylor--Wiles \cite{Taylor}, and Breuil--Conrad--Diamond--Taylor \cite{Breuil}. Thus, there is a Taylor expansion of $L(E, s)$ at $s=1$ with the form
\begin{equation}\label{Taylor}
L(E, s)=\sum_{i=r}^{\infty} c_i (s-1)^i=c_r(s-1)^r+c_{r+1}(s-1)^{r+1} + \cdots,
\end{equation}
where $r$ is a non-negative integer which is defined to be the analytic rank of $E$. The Birch and Swinnerton-Dyer conjecture asserts that $r=\rank(E(\BQ))$. Moreover, the conjecture predicts a deep connection between $c_r$ and the arithmetic invariants of $E$. For recent progress on this conjecture, we refer the reader to a nice survey article by Burungale--Skinner--Tian \cite{BST}.

Thanks to the breakthrough of Yun--Zhang \cite{YZ1,YZ2}, an extension of the Gross--Zagier and Waldspurger formulae for higher derivatives of more general complete $L$-functions over function fields are given. This development provides a geometric explanation for all the coefficients appearing in the Taylor expansion of these $L$-functions over function fields. However, over number fields, there is still no any arithmetic explanation for the other coefficients of \eqref{Taylor}, say $c_{r+j}$ for integers $j \geq 1$. In the current paper, we investigate the coefficients of \eqref{Taylor} in the family of quadratic twists through an analytic view. For each discriminant $d$ of a quadratic extension of $\BQ$, we write $E^{(d)}$ for the twist of $E$ by this quadratic extension. As usual, we have the following Taylor expansion of $L(E^{(d)}, s)$ at $s=1$ with the form
\begin{equation}\label{Taylor2}
L(E^{(d)}, s)=\sum_{i=r_d}^{\infty} c^{(d)}_i (s-1)^i=c^{(d)}_{r_d}(s-1)^{r_d}+c^{(d)}_{r_d+1}(s-1)^{r_d+1} + \cdots,
\end{equation}
where $r_d$ is the analytic rank of $E^{(d)}$. When $d$ is varying, $r_d$ could be $0, 1, 2, \cdots$. For example, when the root number of the functional equation of $L(E^{(d)},s)$ is $+1$ (or $-1$), Goldfeld \cite{G} proposed a nice conjecture on how often $r_d$ is $0$, and $1$, i.e., the behaviour of non-vanishing of $c^{(d)}_0$ (or $c^{(d)}_1$). It is conjectured that, for every elliptic curve defined over $\BQ$, amongst the set of all its quadratic twists with root number $+1$, there is a subset of density one where the central $L$-value of the twist is non-zero, and amongst the set of all its quadratic twists with root number $-1$, there is a subset of density one where the central $L$-value of the twist has a zero of order equal to 1. For any elliptic curve $E$ over $\BQ$, we prove the following result.

\begin{theorem}\label{thm3}
Assuming the Riemann Hypothesis for $L(E^{(d)}, s)$, for any positive integer $h$, if $\lvert d \rvert \geq \frac{2\pi}{\sqrt{q}} e^{h^3 e^{\gamma+1}}$, then
$$
c_{m}^{(d)} \neq 0
$$
for all $r_d \leq m \leq h$, where $\gamma$ denotes the Euler constant. Furthermore, the sign of $c_{m}^{(d)}$ is given by $\omega_{E^{(d)}}(-1)^m$, where $\omega_{E^{(d)}}$ is the root number of $E^{(d)}$.
\end{theorem}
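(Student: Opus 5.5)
We argue via the Hadamard product of the completed $L$-function, in the spirit of results on Taylor coefficients of $\Lambda$-functions under RH. Let $N_d$ be the conductor of $E^{(d)}$; for $d$ coprime to $q$, $N_d = q d^2$, and in general $N_d \asymp q d^2$. We define
\[
\Lambda(s)=\Big(\frac{\sqrt{N_d}}{2\pi}\Big)^{s}\Gamma(s)L(E^{(d)},s),
\qquad \Lambda(s)=\omega_{E^{(d)}}\Lambda(2-s).
\]
It is convenient to set $z=s-1$ and $\xi(z)=\Lambda(1+z)$. Then $\xi$ is entire of order $1$, and $\xi(-z)=\omega_{E^{(d)}}\xi(z)$.

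Under RH the zeros of $\xi$ are $z=\pm i\gamma_j$ with $\gamma_j>0$, together with a zero of order $r_d$ at $z=0$. Since $\xi$ has a definite parity, the Hadamard factor $e^{Bz}$ is trivial, and we get
\[
\xi(z)=C z^{r_d}\prod_{j}\Big(1+\frac{z^2}{\gamma_j^2}\Big).
\]
Expanding the product shows that every Taylor coefficient of $\xi$ at $0$, of degree $r_d+2k$, is $C$ times a positive number. The odd-offset coefficients vanish.

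The first key step fixes the sign of $C$. For real $z>1$ we have $\xi(z)>0$, because $L(E^{(d)},s)>0$ for $s>3/2$ by absolute convergence of the Euler product. Hence $C>0$. The parity of $r_d$ matches the root number, so $(-1)^{r_d}=\omega_{E^{(d)}}$.

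The second step returns to $L$. Write
\[
L(E^{(d)},1+z)=\xi(z)\,G(z)^{-1},\qquad G(z)=A^{1+z}\Gamma(1+z),\quad A=\frac{\sqrt{N_d}}{2\pi}.
\]
Then
\[
\frac{1}{G(z)}=A^{-1}\exp\big(-z\log A-\log\Gamma(1+z)\big)=A^{-1}\exp\big(-(\log A-\gamma)z-\tfrac{\zeta(2)}{2}z^2+\tfrac{\zeta(3)}{3}z^3-\cdots\big).
\]
We set $\lambda=\log A-\gamma$. For large $|d|$ the dominant part of $1/G$ is $e^{-\lambda z}$, whose coefficients $(-\lambda)^n/n!$ alternate in sign and grow like $\lambda^n$.

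The coefficient $c^{(d)}_m$ is a convolution of the nonnegative coefficients $a_{r_d+2k}$ of $\xi$ with the coefficients $b_n$ of $1/G$. We aim to show that for $m\le h$ the sign of every contributing term is $(-1)^{m-r_d}$. For the $\xi$-part, $a_{r_d+2k}$ carries the sign $(-1)^{2k}=1$. For the $G$-part we want $\operatorname{sgn} b_n=(-1)^n$ for all $n\le h$. Granting this, the sign of $c^{(d)}_m$ is $(-1)^{m-r_d}=(-1)^m\omega_{E^{(d)}}$. The term $a_{r_d}b_{m-r_d}$ is nonzero, so $c^{(d)}_m\neq0$.

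This reduces everything to a statement about $\Gamma$ alone: $\operatorname{sgn} b_n=(-1)^n$ for all $n\le h$ once $\lambda$ is large. This is the main obstacle. We write
\[
\exp(-\lambda z)\cdot \exp(-\log\Gamma(1+z)-\gamma z),
\]
where the second factor has coefficients $e_k$ bounded by roughly $1$ in absolute value, since $1/\Gamma(1+z)$ is entire with modest coefficients. Then
\[
(-1)^n b_n A=\sum_{k}\frac{\lambda^{n-k}}{(n-k)!}(-1)^k e_k .
\]
The $k=0$ term is $\lambda^n/n!$. The remainder is bounded by $\sum_{k\ge1}|e_k|\lambda^{n-k}/(n-k)!$, which is at most $\frac{\lambda^n}{n!}\sum_{k\ge1}\big(n/\lambda\big)^k|e_k|$. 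This is smaller than the main term once $\lambda\gg n$.

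To reach the paper's stated threshold, $\log|d|\ge h^3e^{\gamma+1}+\log(2\pi/\sqrt q)$, we need cruder but explicit bounds on $e_k$. We would bound them by Cauchy's estimate on a circle of radius $R\approx h$, using $|1/\Gamma(1+z)|\le e^{|z|\log|z|}$-type bounds. This gives $|e_k|\le (e^{\gamma+1}h^2)^k/h^{k}$ or similar. The condition $\lambda\ge h^3e^{\gamma+1}$ then makes the tail sum less than $1$. The choice of $R$ and the resulting constant are where the care is needed.

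We should also double check $\lambda=\log|d|+\log(\sqrt q/2\pi)-\gamma$. When $N_d\neq qd^2$ the conductor only increases, so the inequality persists, and the stated bound on $|d|$ gives exactly $\lambda+\gamma\ge h^3e^{\gamma+1}$.
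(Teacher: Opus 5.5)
Your argument is correct when $(d,q)=1$, and it is essentially the paper's proof. The paper also writes $L(E^{(d)},s)=\Lambda(E^{(d)},s)\,A^{-s}\Gamma(s)^{-1}$ with $A=\sqrt q\,|d|/2\pi$. It takes $\Lambda^{(r_d+2j)}(E^{(d)},1)>0$ from Yun--Zhang's super-positivity lemma, whose proof is precisely your Hadamard-product argument. It then shows via Fa\`a di Bruno that the Taylor coefficients of $A^{-s}\Gamma(s)^{-1}$ at $s=1$ alternate in sign up to order $h$.

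The step you call the main obstacle is easy and needs no circle of radius $\approx h$. The paper's crude bound (drop the constraint in the Fa\`a di Bruno sum), applied to your $e_k$, gives $|e_k|\le\exp\bigl(\sum_{j=2}^{k}\zeta(j)/j\bigr)\le e\,k$. Since $e_1=0$, your tail is then at most $e\sum_{k\ge2}k\,4^{-k}=7e/36<1$ as soon as $\lambda\ge 4h$. The hypothesis gives $\lambda\ge h^3e^{\gamma+1}-\gamma\ge 4h$, so you are done. In fact your route shows that $\log(\sqrt q|d|/2\pi)\ge 4h+\gamma$ already suffices, which is much better than the $h^3$ threshold.

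One claim in your last paragraph is false and should be removed. Twisting can \emph{lower} the conductor at primes dividing $(d,q)$, so neither $N_d\asymp qd^2$ nor ``the conductor only increases'' holds. For example, let $E_0$ be the curve 11a and $E=E_0^{(17)}$. Then $q=11\cdot 17^2$, and $E^{(17)}\cong E_0$ has conductor $11$. The size condition of the theorem holds for $h=1$, $d=17$ (the bound is about $14.1$). But the functional equation gives $\Lambda'(E_0,1)=0$, hence $c^{(17)}_1=L'(E_0,1)=L(E_0,1)\bigl(\gamma-\log\frac{\sqrt{11}}{2\pi}\bigr)>0$, whereas the predicted sign is $\omega_{E^{(17)}}(-1)^1=-1$.

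So your proof, like the paper's, establishes the statement only for $(d,q)=1$; the paper silently assumes this by using $\sqrt q|d|$ in the completed $L$-function and the root number $-\eta\chi_d(-q)$. Alternatively, it works for all $d$ once $\sqrt q|d|$ is replaced by $\sqrt{N_d}$. The coprimality condition is genuinely needed in the statement as written.
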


\begin{remark}
As we have mentioned, $r_d=0, 1$ for $100\%$ quadratic twists is predicted by Goldfeld's conjecture. Recently, Goldfeld's conjecture has been proved by a series of remarkable work of Smith \cite{Smith1}\cite{Smith2}\cite{Smith3} by assuming the Birch--Swinnerton-Dyer conjecture. Unconditionally, by the celebrated work of Burungale--Tian \cite{BT}, Goldfeld conjecture for the case $r_d=0$ has been proved for the congruent number elliptic curve.
\end{remark}
\begin{remark}
It is worth noting that for relatively small values of $d$, the sign of $c_{m}^{(d)}$ may not necessarily satisfy the relation $\omega_{E^{(d)}}(-1)^m$. Nevertheless, we conjecture that $c_{m}^{(d)}$ is always nonvanishing for all $d$. As an illustration, we provide two examples for the congruent number elliptic curve $E: y^2=x^3-x$.
\begin{itemize}
\item $E^{(5)}: y^2=x^3-5^2x$, $\omega_{E^{(5)}}=-1$, $r_5=1$, the Taylor expansion of $L(E^{(5)}, s)$ at $s=1$ is
\begin{fontsize}{9pt}{2pt}
\begin{align*}
&0.00 + 2.23*(s-1) - 2.07*(s-1)^2
+ 0.55*(s-1)^3+ 0.97*(s-1)^4 - 1.57*(s-1)^5 + 1.32*(s-1)^6\\
& - 0.75*(s-1)^7 + 0.29*(s-1)^8
- 0.05*(s-1)^9 - 0.03*(s-1)^{10}+ 0.04*(s-1)^{11}+ O((s-1)^{12}).
\end{align*}
\end{fontsize}
\item $E^{(193)}: y^2=x^3-193^2x$, $\omega_{E^{(193)}}=1$, $r_{193}=0$, the Taylor expansion of $L(E^{(193)}, s)$ at $s=1$ is
\begin{fontsize}{9pt}{2pt}
\begin{align*}
&0.75 - 3.46*(s-1) + 28.45*(s-1)^2
- 105.81*(s-1)^3 + 262.52*(s-1)^4 - 488.96*(s-1)^5 + 718.88*(s-1)^6 \\
& - 849.12*(s-1)^7 + 794.71*(s-1)^8
- 541.34*(s-1)^9 + 157.44*(s-1)^{10} + 241.02*(s-1)^{11} +O((s-1)^{12}).
\end{align*}
\end{fontsize}
\end{itemize}
\end{remark}

We will prove Theorem \ref{thm3} in Section 2, by applying the super-positivity of $L$-values of Yun--Zhang \cite{YZ1}. Unconditionally, by applying a series of results on moments of modular $L$-functions we prove the following nonvanishing results for $c^{(d)}_i$.

\begin{theorem}\label{thm1}
We have
$$\#\{|d| \leq X | c^{(d)}_i \neq 0 \} \gg \frac{X}{\log X}$$
for any integer $i \geq 0$.
\end{theorem}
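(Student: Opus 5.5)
The plan is to run a first/second moment argument in a thinned family of twists and apply Cauchy--Schwarz. Note that $c^{(d)}_i = L^{(i)}(E^{(d)},1)/i!$, so $c^{(d)}_i\neq 0$ exactly when $L^{(i)}(E^{(d)},1)\neq 0$.

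\textbf{Choice of family.} I restrict to fundamental discriminants $d$ in fixed residue classes modulo $4q$. In such classes the root number $\omega_{E^{(d)}}$ is constant, and I choose the class so that $\omega_{E^{(d)}}=(-1)^i$. With the opposite sign the functional equation forces $L^{(i)}(E^{(d)},1)=0$ identically, so the parity must match. This family is empty only if the root number cannot be adjusted, and twisting by suitable $d$ flips it, so a suitable class always exists.

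\textbf{Moments.} Write $L=L(E^{(d)},s)$ and let $\Phi$ be a smooth weight supported in $[1,2]$. I would invoke the known moment results for derivatives of quadratic twists of modular $L$-functions:
\begin{equation*}
\sum_{d}{}^{\flat}\, L^{(i)}(E^{(d)},1)\,\Phi(|d|/X) \sim C_1 X(\log X)^{i},
\end{equation*}
\begin{equation*}
\sum_{d}{}^{\flat}\, \bigl(L^{(i)}(E^{(d)},1)\bigr)^2\,\Phi(|d|/X) \ll X(\log X)^{2i+1},
\end{equation*}
with $C_1\neq 0$. The first moment is proved by the approximate functional equation: differentiate $i$ times, which produces a factor $(\log(|d|\sqrt{q}/2\pi))^{i}$ times a smoothed sum over $n$, and the main term comes from $n$ a square. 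The second moment is the harder input. For it I would cite Soundararajan--Young type results, or Petrow's asymptotics for derivatives; these are unconditional for the upper bound of the right order in the derivative case, or else one uses a mollified version. Note that the $\sum^\flat$ over fundamental discriminants requires sieving out squares via $\mu^2$, handled in the standard way.

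\textbf{Conclusion.} By Cauchy--Schwarz,
\begin{equation*}
\#\{|d|\le X : c^{(d)}_i\neq 0\}\ \ge\ \frac{\bigl(\sum L^{(i)}\Phi\bigr)^2}{\sum (L^{(i)})^2\Phi}\ \gg\ \frac{X^2(\log X)^{2i}}{X(\log X)^{2i+1}}=\frac{X}{\log X}.
\end{equation*}
Summing dyadically, or just taking the top dyadic block, gives Theorem \ref{thm1}.

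\textbf{Main obstacle.} The key difficulty is the second moment upper bound of the sharp order $X(\log X)^{2i+1}$ for the $i$-th derivative, uniformly handling the differentiated weight in the approximate functional equation. Any extra power of $\log$ would weaken the result. I would try first to obtain it by differentiating the known asymptotic for the twisted second moment $\sum L(E^{(d)},1/2+\alpha)L(E^{(d)},1/2+\beta)$ (in the analytic normalization) with respect to the shifts $\alpha,\beta$. This is valid by holomorphy and Cauchy's integral formula on small circles, provided the shifted-moment asymptotic holds uniformly for $|\alpha|,|\beta|\ll 1/\log X$. One also has to check that $C_1\neq0$ in the first moment; this follows since the leading term is a positive multiple of $L(\mathrm{Sym}^2 E,1)$-type constants times $\frac{1}{i!}$-sized polynomial coefficients with the correct sign.
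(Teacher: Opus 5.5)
\textbf{Verdict: same skeleton as the paper, but the second-moment input, which is the whole difficulty, is assumed rather than supplied.}

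Your outline coincides with the paper's. You take a first moment of $L^{(i)}(\tfrac12,f\otimes\chi_{8d})$ of size $X(\log X)^i$; the paper gets this by differentiating Shen's shifted first moment. You take a second moment $\ll X(\log X)^{2i+1}$, obtained from a shifted second moment uniform in $|\alpha|,|\beta|\le 1/\log X$ and differentiated via Cauchy's formula. Then you apply Cauchy--Schwarz.

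The gap is the second-moment input, and your citations do not provide it. The Soundararajan--Young asymptotic, and the derivative analogues obtained by their method, assume GRH. Heath-Brown's large sieve, the unconditional alternative, only gives $X^{1+\varepsilon}$, which destroys the count. A mollified second moment does not bound the unmollified $\sum (L^{(i)})^2$. Your fallback is the right one, but the uniform shifted asymptotic at level $q$ is not off the shelf. It is Theorem~\ref{thm3.1}, proved in Section 3 by running Li's argument, built on the estimate of Lemma~\ref{lem.SMNt}, at odd level $q$ and with shifts, with error $O(X(\log\log X)^4)$. Theorem~\ref{thm4.1} then differentiates, losing exactly $(\log X)^{2i}$, so the $(\log\log X)^4$ is harmless. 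As written, your proposal assumes the paper's main technical theorem rather than proving it.

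\textbf{Your reason for fixing the root number is false.} When $\omega(-1)^i=-1$ the functional equation kills $\Lambda^{(i)}(\tfrac12)$, not $L^{(i)}(\tfrac12)$. Since $L=\Lambda/\gamma$ with $\gamma(s)=(\sqrt q|d|/2\pi)^s\Gamma(s+\tfrac12)$, $L^{(i)}$ mixes in all lower derivatives of $\Lambda$. For example, for $\omega=+1$ one has $L'(\tfrac12)=-L(\tfrac12)\,\tfrac{\gamma'}{\gamma}(\tfrac12)$, and $E^{(193)}$ in the paper has $\omega=+1$ but $c_1\neq 0$.

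\textbf{The restriction is still worth keeping, for a different reason.} In the unrestricted family the paper uses, the two root-number classes enter the $X(\log X)^i$ term with opposite signs, as the sign rule $\omega_{E^{(d)}}(-1)^m$ of Theorem~\ref{thm3} predicts. The net coefficient is proportional to $\eta Z_q(0)$, and it can vanish identically. If $p^2\mid q$ with $v_p(q)$ odd (e.g.\ $q=27$), then $\lambda_f(n)=0$ for every $n$ with $qn=\square$. Hence $Z_q\equiv 0$, and for $i\ge 1$ Cauchy--Schwarz gives at best $X/(\log X)^{2i+1}$. Inside one root-number class this cancellation disappears, and the second-moment upper bound passes to the class by positivity of $(L^{(i)})^2$.

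But you then have to compute the first moment in the class. Restricting to $\omega=\epsilon$ means inserting $\tfrac12\bigl(1+\epsilon\,\omega(f\otimes\chi_{8d})\bigr)=\tfrac12\bigl(1-\epsilon\eta\chi_{8d}(-q)\bigr)$. This requires Shen's first moment twisted by $\chi_{8d}(-q)$, which interchanges the roles of $Z_1$ and $Z_q$. You must also verify that the resulting constant is nonzero; your one-line sign heuristic for $C_1\neq 0$ is not such a verification.
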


\begin{theorem}\label{thm2}
There are infinitely many $d$ such that $c^{(d)}_i c^{(d)}_j \neq 0$ for any fixed $i, j \in \BZ_{\geq 0}$.
\end{theorem}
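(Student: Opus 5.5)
Throughout I write $L_d(s)=L(E^{(d)},s)$, let $N_d$ be the conductor of $E^{(d)}$ (so $N_d\asymp d^2$), and use the completed function $\Lambda_d(s)=N_d^{s/2}(2\pi)^{-s}\Gamma(s)L_d(s)$. If $i=j$, Theorem \ref{thm2} follows directly from Theorem \ref{thm1}, so assume $i\neq j$.

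The plan is a mixed first moment. I will show that the weighted sum
$$S(X)=\sum_{d}{}^{\flat}\; c^{(d)}_i c^{(d)}_j\, \Phi\!\left(\frac{|d|}{X}\right)$$
has a main term $C_{i,j}\,X(\log X)^{A}$ with $C_{i,j}\neq 0$ and some $A\geq 0$, plus a smaller error. Here $\flat$ restricts $d$ to fundamental discriminants coprime to $2q$ lying in fixed residue classes, so that $N_d=q d^2$ and the root number $\omega_{E^{(d)}}=\omega$ is constant, and $\Phi$ is a fixed smooth bump. Such an asymptotic forces $c^{(d)}_i c^{(d)}_j\neq 0$ for some $d$ with $|d|\asymp X$, for every large $X$. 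Hence infinitely many such $d$ exist.

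First step: reduce $c^{(d)}_i$ to central derivatives. From $L_d(s)=(2\pi)^{s}N_d^{-s/2}\Gamma(s)^{-1}\Lambda_d(s)$, Leibniz's rule expresses $c^{(d)}_m$ as a finite combination
$$c^{(d)}_m=\frac{2\pi}{\sqrt{q}\,|d|}\sum_{a+b\leq m}\beta_{a,b}\,(\log |d|)^{b}\,\Lambda_d^{(a)}(1)\Big/\big(N_d^{1/2}(2\pi)^{-1}\big).$$
The coefficients $\beta_{a,b}$ are explicit and depend only on $q$, $m$ and the Taylor coefficients of $1/\Gamma$ and $(2\pi/\sqrt q)^{s}$ at $s=1$. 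The normalising factor $N_d^{1/2}(2\pi)^{-1}$ is there so that the remaining quantity is essentially $L_d^{(a)}(1)$. The top-degree term is $(-\log|d|)^{m}L_d(1)/m!$ plus lower powers multiplied by higher derivatives. The functional equation $\Lambda_d(s)=\omega\Lambda_d(2-s)$ kills $\Lambda_d^{(a)}(1)$ for $a$ of the wrong parity. Consequently $S(X)$ is a finite linear combination of the sums
$$\sum_d{}^{\flat}\,(\log|d|)^{b+b'}\,\Lambda_d^{(a)}(1)\,\Lambda_d^{(a')}(1)\,\Phi(|d|/X),$$
with $a\equiv a'$ of the parity allowed by $\omega$.

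Second step: compute these sums. Each product of derivatives is $\partial_\alpha^{a}\partial_\beta^{a'}$ at $\alpha=\beta=0$ of $\Lambda_d(1+\alpha)\Lambda_d(1+\beta)$. So I will invoke the unconditional second moment of quadratic twists of a fixed holomorphic newform with shifts (X.~Li's asymptotic, with a power saving). The shifted main term has the Conrey--Farmer--Keating--Rubinstein--Snaith shape
$$X\sum_{\pm}\Big(\frac{X^{2}}{\cdots}\Big)^{\ldots}\frac{\mathcal A(\pm\alpha,\pm\beta)}{\alpha+\beta}+\cdots.$$
Its derivatives produce $X$ times a polynomial in $\log X$ of degree $a+a'+1$, with an explicit leading coefficient proportional to $\mathcal A(0,0)\,L(\mathrm{sym}^2 E,1)$-type constants, times a beta-integral factor $\int_0^1 x^{a}(1-x)^{a'}dx$-like in $a,a'$. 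The error terms survive differentiation by Cauchy's integral formula on small circles. Summing over $(a,b),(a',b')$, every term has total log-degree at most $i+j+1$. Collecting the coefficient of $X(\log X)^{i+j+1}$ gives $C_{i,j}$ as an explicit finite sum of binomial/beta-type numbers times a universal nonzero arithmetic factor.

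The main obstacle is showing $C_{i,j}\neq 0$, since the contributions from the various $(a,b)$ could cancel. I would first compute $C_{i,j}$ as a single integral. After substituting the CFKRS main term, the leading part of $S(X)$ becomes $X$ times the coefficient extraction
$$[u^i v^j]\;\frac{1}{2\pi i}\oint\!\!\oint \frac{e^{\mathcal L(u+v)}\cdots}{u+v}\,\frac{du\,dv}{\cdots},$$
with $\mathcal L=\log X$. This should collapse to a multiple of $\int_0^1 (\text{polynomial})\,dt$ with a polynomial of constant sign, e.g. $(1-t)^i(1-t)^j$ up to signs $(-1)^{i+j}$, which would give nonvanishing. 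If cancellation occurs at the top degree, I would instead restrict to a residue class of $d$ with $\omega=\pm1$ chosen so that the parity constraint removes the competing terms, and recompute. As a fallback one could use a mollified or twisted first moment $\sum_d c_i^{(d)}c_j^{(d)}\chi_d(\ell)$ with suitable $\ell$ to break any accidental cancellation.
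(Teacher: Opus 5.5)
Your architecture is the paper's. There, Theorem \ref{thm2} is read off from Theorem \ref{thm4.1}, the asymptotic $\mathcal{C}_{i+j+1}X(\log X)^{i+j+1}$ for $\sum^{*}L^{(i)}L^{(j)}(\tfrac12,f\otimes\chi_{8d})J(8d/X)$, which is obtained by differentiating the shifted moment of Theorem \ref{thm3.1}. Your detour through $\Lambda_d^{(a)}(1)$ is not needed: $i!\,j!\,c_i^{(d)}c_j^{(d)}=\partial_\alpha^i\partial_\beta^j\bigl[L_d(1+\alpha)L_d(1+\beta)\bigr]_{\alpha=\beta=0}$, and the shifted moment of $L$ can be differentiated directly.

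The genuine gap is the step you flag yourself: you never show $C_{i,j}\neq 0$. ``This should collapse'', ``if cancellation occurs'' and the mollified fallback are not an argument. Nonvanishing is also not automatic: for $\omega=-1$ and $\min(i,j)=0$ the constant is identically zero, because every $c_0^{(d)}$ vanishes. Everything else is bookkeeping on top of the shifted moment, so this is the real content of the theorem, and as written it is missing.

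The gap closes easily in your own setup, and fixing $\omega$ is what makes it easy. In the paper's full family, by contrast, the constant \eqref{eq.xishu} pits a $Z_1^*/(i+j+1)$ term against an $\eta Z_q^*B(i+1,j+1)$ term. Write $L_d(1+s)=e^{-\mathcal{L}_d s}\Gamma(1+s)^{-1}\widetilde{\Lambda}_d(1+s)$ with $\mathcal{L}_d=\log(\sqrt{q}|d|/2\pi)$. On a family of root number $\omega$, the leading part of the shifted main term for $\widetilde{\Lambda}_d(1+\alpha)\widetilde{\Lambda}_d(1+\beta)$ is $K\bigl[\frac{\sinh(\mathcal{L}(\alpha+\beta))}{\alpha+\beta}+\omega\frac{\sinh(\mathcal{L}(\alpha-\beta))}{\alpha-\beta}\bigr]$ with $\mathcal{L}=\mathcal{L}_d$. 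Hence $\widetilde{\Lambda}_d^{(a)}(1)\widetilde{\Lambda}_d^{(a')}(1)$ has mean $2K\mathcal{L}^{a+a'+1}/(a+a'+1)$ whenever $a\equiv a'\equiv\delta \pmod 2$, where $\delta=\tfrac{1-\omega}{2}$. Only the terms with $b=i-a$, $b'=j-a'$ and no Taylor coefficients of $1/\Gamma$ reach the top power, so
\[
C_{i,j}=(-1)^{i+j}\,2K\sum_{\substack{a\le i,\ a'\le j\\ a\equiv a'\equiv \delta\ (\mathrm{mod}\ 2)}}\frac{1}{a!\,a'!\,(i-a)!\,(j-a')!\,(a+a'+1)}.
\]
Every admissible pair carries the same sign $(-1)^{b+b'}=(-1)^{i+j}$ because $a\equiv a'$, so nothing cancels. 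Equivalently, $C_{i,j}\propto\frac{1}{i+j+1}+\omega B(i+1,j+1)$. Take $\omega=+1$: then $a=a'=0$ is admissible, and $C_{i,j}\neq 0$ for all $i,j$ provided $K\neq 0$.

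Three things remain to check.
\begin{enumerate}
\item $K\neq 0$. For the family $8d$ with $d>0$, $K$ is proportional to $L(1,\operatorname{sym}^2 f)^3\bigl(Z_1^*-\eta\omega Z_q^*\bigr)(\tfrac12,\tfrac12)$, so this is an Euler-product check.
\item The root-number restriction is legitimate. It requires the moment twisted by $\chi_{8d}(q)$, which the cross terms of Theorem \ref{thm3.1} already handle. When $q$ is a square you also need negative discriminants, since then $\omega_d=-\eta\,\sgn(d)$.
\item The error term. Li's second moment has no power saving, so cite instead the $O(X(\log\log X)^4)$ error of Theorem \ref{thm3.1}. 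It survives differentiation via Cauchy's formula and suffices.
\end{enumerate}
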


The assertion for $c^{(d)}_0$ in Theorem \ref{thm1} has been obtained by Ono--Skinner \cite{OS} by applying Waldspurger formula. On the other hand, if we apply the remarkable result on the second moment of modular $L$-functions of Li \cite{Li}, combining with the recent work given by Shen \cite{SQ}, to arbitrary levels, we can also get the same lower bound $X/\log X$. For $c^{(d)}_1$, in the family of root number $-1$, Kumar--Mallesham--Sharma--Singh\cite{KMSS} got the lower bound $X/\log X$ by combining the work of Murty--Murty\cite{MM} and Iwaniec\cite{I}. We will prove the above two results by applying a series of results about the moments of the central values of the derivatives of quadratic twists of modular $L$-functions. We will prove Theorem \ref{thm1} and Theorem \ref{thm2} in Section 4. The proof is based on the recent work of Li \cite{Li}, Shen \cite{SQ}, Sono \cite{Son}, and Soundararajan--Young\cite{SY}.

\medskip

{\bf Acknowledgements}. The authors would like to thank Dorian Goldfeld, Chao Li, Xiannan Li, Quanli Shen, Kannan Soundararajan, and Wei Zhang for their helpful discussions and suggestions. We are also grateful to Jianya Liu and Andrew Wiles for their encouragement, and to Zeev Rudnick for numerous insightful suggestions that significantly improved the structure and content of this paper. Shuai Zhai would like to thank Barry Mazur for his hospitality and the key insights shared during a visit to Harvard while this work was in progress.

\section{Preliminary results}


Let $E/\mathbb{Q}$ be any elliptic curve of conductor $q$. There exists a primitive cusp form with level $q$ and weight $2$, say
\begin{equation*}
f(z)=\sum_{n=1}^{\infty}\lambda_{f}(n)n^{\frac{1}{2}}e(nz)\in S_{2}(\Gamma_0(q)),
\end{equation*}
such that
\begin{equation*}
L(E, s+1/2)=L(s, f)=\sum_{n=1}^{\infty}\lambda_{f}(n)n^{-s}
\end{equation*}
with $\lambda_f(1)=1$ and $f$ has been normalized so that the Deligne's bound gives $|\lambda_f(n)|\leq\tau(n)$ for all $n\geq1$, where $\tau(n)$ is the divisor function. We study the family of twists of $f$ by quadratic characters. Let $d$ denote a fundamental discriminant, and $\chi_d(\cdot)=\left(\frac{d}{\cdot}\right)$ denote the primitive quadratic character of conductor $\lvert d\rvert$. Then $f\otimes\chi_d$ is a primitive Hecke eigenform of level $q\lvert d\rvert^{2}$ and the twisted $L$-function is defined for $\Re(s)>1$ by
\begin{equation*}
L(s, f \otimes \chi_d)=\sum_{n=1}^{\infty}\frac{\lambda_f(n)\chi_d(n)}{n^s}=\prod_{p\nmid qd}\left(1-\frac{\lambda_f(p)\chi_{d}(p)}{p^s}+\frac{1}{p^{2s}}\right)^{-1}
\prod_{p\mid q}\left(1-\frac{\lambda_f(p)\chi_{d}(p)}{p^s}\right)^{-1}.
\end{equation*}
The completed $L$-function is defined by
\begin{equation}
\Lambda\left(s, f \otimes \chi_d\right)=\left(\frac{\sqrt{q}\lvert d\rvert}{2 \pi}\right)^s \Gamma\left(s+1/2\right) L(s, f \otimes \chi_d).
\end{equation}
It has the functional equation
\begin{equation}
\Lambda\left(s, f \otimes \chi_d\right)=-\eta\chi_d(-q)\Lambda\left(1-s, f \otimes \chi_d\right),
\end{equation}
where $\eta$ is the eigenvalue of the Fricke involution which is independent of $q$ and always $\pm1$. We denote the root number by $\omega(f \otimes \chi_d):=-\eta\chi_d(-q)$. Note that if $d$ is a fundamental discriminant, then $\chi_d(-1)=\pm1$ depending on whether $d$ is positive or negative.

We are interested in the $l$-th derivative of the $L$-function, which also has a Dirichlet series convergent in a right half-plane
\begin{equation*}
L^{(l)}(s, f\otimes\chi_d)=(-1)^{l}\sum_{n\geq1}\frac{\lambda_f(n)\chi_d(n)\log^ln}{n^s}.
\end{equation*}
It also has a functional equation
\begin{equation*}
\Lambda^{(l)}(s, f\otimes\chi_d)=(-1)^{l+1}\eta\chi_d(-q)\Lambda^{(l)}(1-s, f\otimes\chi_d).
\end{equation*}
Here, we first introduce the following fundamental tools.
\begin{lemma}\label{lem2.2}
Let
\begin{equation}\label{G_k}
G_k(n)=\left(\frac{1-i}{2}+\left(\frac{-1}{n}\right)\frac{1+i}{2}\right)\sum_{a \bmod n}\left(\frac{a}{n}\right)e\left(\frac{a k}{n}\right).
\end{equation}
The sum $G_k(n)$ appeared in the work of Soundararajan\cite{S}. For $m, n$ relatively prime odd integers, $G_k(mn)=G_k(m)G_k(n)$, and for $p^\alpha \| k$ (set $\alpha=\infty$ for $k=0$), then
\begin{equation*}
G_k\left(p^\beta\right)=
\begin{cases}
0, & \text { if } \beta \leq \alpha \text { is odd, } \\
\varphi\left(p^\beta\right), & \text { if } \beta \leq \alpha \text { is even, } \\
-p^\alpha, & \text { if } \beta=\alpha+1 \text { is even, } \\
\left(\frac{k p^{-\alpha}}{p}\right) p^\alpha \sqrt{p}, & \text { if } \beta=\alpha+1 \text { is odd, } \\
0, & \text { if } \beta \geq \alpha+2 .
\end{cases}
\end{equation*}
Note that $G_0(n)=\varphi(n)$ when $n$ is a perfect square, and vanishes otherwise.
\end{lemma}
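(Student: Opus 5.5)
The plan is to reduce both claims to classical facts about Ramanujan sums and quadratic Gauss sums, in the spirit of Soundararajan \cite{S}. Write $\epsilon(n)=\frac{1-i}{2}+\left(\frac{-1}{n}\right)\frac{1+i}{2}$ and $\tau_k(n)=\sum_{a \bmod n}\left(\frac{a}{n}\right)e\left(\frac{ak}{n}\right)$, so that $G_k(n)=\epsilon(n)\tau_k(n)$. Note that $\epsilon(n)=1$ if $n\equiv 1 \pmod 4$ and $\epsilon(n)=-i$ if $n\equiv 3\pmod 4$.

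\textbf{Multiplicativity.} Let $m,n$ be odd and coprime. By the Chinese remainder theorem, as $x$ runs mod $m$ and $y$ mod $n$, the residue $a=xn+ym$ runs over all residues mod $mn$. For such $a$ we have
$$\left(\frac{a}{mn}\right)=\left(\frac{xn}{m}\right)\left(\frac{ym}{n}\right) \quad\text{and}\quad e\left(\frac{ak}{mn}\right)=e\left(\frac{xk}{m}\right)e\left(\frac{yk}{n}\right).$$
Hence
$$\tau_k(mn)=\left(\frac{n}{m}\right)\left(\frac{m}{n}\right)\tau_k(m)\tau_k(n)=(-1)^{\frac{(m-1)(n-1)}{4}}\tau_k(m)\tau_k(n)$$
by quadratic reciprocity. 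It then remains to check that $\epsilon(m)\epsilon(n)(-1)^{(m-1)(n-1)/4}=\epsilon(mn)$. This is a case check mod $4$. The only nontrivial case is $m\equiv n\equiv 3 \pmod 4$, where $(-i)^2\cdot(-1)=1=\epsilon(mn)$.

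\textbf{Prime powers.} Let $p^\alpha\| k$ and split according to the parity of $\beta$.

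If $\beta$ is even, the symbol $\left(\frac{a}{p^\beta}\right)$ is the principal character mod $p^\beta$. Then $\tau_k(p^\beta)$ is the Ramanujan sum $c_{p^\beta}(k)$, which equals $\varphi(p^\beta)$ if $\beta\le\alpha$, equals $-p^{\beta-1}=-p^\alpha$ if $\beta=\alpha+1$, and equals $0$ if $\beta\ge\alpha+2$. Also $\epsilon(p^\beta)=1$ since $p^\beta\equiv 1\pmod 4$.

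If $\beta$ is odd, then $\left(\frac{a}{p^\beta}\right)=\left(\frac{a}{p}\right)$. Write $a=b+pc$ with $b$ mod $p$ and $c$ mod $p^{\beta-1}$. The inner sum over $c$ is $\sum_c e(ck/p^{\beta-1})$, which vanishes unless $p^{\beta-1}\mid k$, in which case it equals $p^{\beta-1}$. This already gives $0$ when $\beta\ge\alpha+2$. In the remaining cases,
$$\tau_k(p^\beta)=p^{\beta-1}\sum_{b \bmod p}\left(\frac{b}{p}\right)e\left(\frac{bk}{p^\beta}\right).$$
If $\beta\le\alpha$, the exponential is $1$ and the character sum is $0$. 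If $\beta=\alpha+1$, put $k'=kp^{-\alpha}$. The sum becomes the twisted Gauss sum $\left(\frac{k'}{p}\right)g(p)$, where $g(p)=\sqrt p$ or $i\sqrt p$ according as $p\equiv 1$ or $3 \pmod 4$. Since $\beta$ is odd, $\epsilon(p^\beta)=\epsilon(p)$, and $\epsilon(p)g(p)=\sqrt p$ in both cases (using $(-i)(i\sqrt p)=\sqrt p$). This yields $\left(\frac{k'}{p}\right)p^\alpha\sqrt p$, as claimed.

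\textbf{The case $k=0$ and the main obstacle.} The statement for $k=0$ follows by taking $\alpha=\infty$ and using multiplicativity: $G_0(n)$ vanishes unless every exponent in $n$ is even, and then equals $\prod\varphi(p^\beta)=\varphi(n)$. The only delicate point is the bookkeeping of the normalizing factor $\epsilon(n)$ against the sign of the Gauss sum and against quadratic reciprocity. I expect this to be handled entirely by the mod-$4$ case checks above.
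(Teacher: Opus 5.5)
Your proof is correct: the CRT factorization with quadratic reciprocity, the mod-$4$ check on the normalizing factor, and the Ramanujan-sum and Gauss-sum evaluations for even and odd $\beta$ all go through as written, with $k=0$ following from multiplicativity. The paper gives no proof of its own and simply defers to Soundararajan \cite{S}, whose argument is essentially the one you give.
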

\begin{lemma}[Poisson summation formula]\label{lem2.3}
Let $F$ be a Schwartz class function over the real numbers and suppose that $n$ is an odd integer. Then
\begin{equation}
\sum_d\left(\frac{d}{n}\right)F\left(\frac{d}{Z}\right)=\frac{Z}{n}\sum_{k\in\mathbb{Z}} G_k(n) \check{F}\left(\frac{kZ}{n}\right),
\end{equation}
and
\begin{equation}
\sum_{(d, 2)=1}\left(\frac{d}{n}\right)F\left(\frac{d}{Z}\right)=\frac{Z}{2n}\left(\frac{2}{n}\right)\sum_{k\in\mathbb{Z}}(-1)^k G_k(n)\check{F}\left(\frac{kZ}{2n}\right)
\end{equation}
where $G_k(n)$ is defined as in (\ref{G_k}), and the Fourier-type transform of $F$ is defined to be
\begin{equation}
\check{F}(y)=\int_{-\infty}^{\infty}(\cos (2\pi xy)+\sin (2\pi xy))F(x)dx.
\end{equation}
\end{lemma}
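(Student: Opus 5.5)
The plan is to prove the first identity by splitting $d$ into residue classes modulo $n$ and applying the classical Poisson summation formula to each class. The second identity follows the same way with modulus $2n$. A short symmetry argument then turns the complex Fourier transform into the real transform $\check F$ together with the twisted Gauss sum $G_k(n)$. Throughout, the Schwartz property of $F$ makes every series absolutely convergent, so all interchanges of summation are justified.

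\textbf{Step 1 (reduction to Gauss sums).} Write $d=a+mn$ with $a$ running modulo $n$. Since $\left(\frac{d}{n}\right)$ depends only on $a$,
$$\sum_d\left(\frac{d}{n}\right)F\left(\frac{d}{Z}\right)=\sum_{a \bmod n}\left(\frac{a}{n}\right)\sum_{m\in\BZ}F\left(\frac{a+mn}{Z}\right).$$
Poisson summation in $m$ gives
$$\sum_{m}F\left(\frac{a+mn}{Z}\right)=\frac{Z}{n}\sum_{k\in\BZ}\hat F\left(\frac{kZ}{n}\right)e\left(\frac{ak}{n}\right),\qquad \hat F(y)=\int_{-\infty}^{\infty} F(x)e(-xy)\,dx.$$
Summing over $a$ then yields $\frac{Z}{n}\sum_k \tau_k(n)\hat F(kZ/n)$, where $\tau_k(n)=\sum_{a \bmod n}\left(\frac{a}{n}\right)e\left(\frac{ak}{n}\right)$.

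\textbf{Step 2 (passing to $\check F$).} Write $\hat F=C-iS$, where
$$C(y)=\int F(x)\cos(2\pi xy)\,dx \quad (\text{even}),\qquad S(y)=\int F(x)\sin(2\pi xy)\,dx \quad (\text{odd}),$$
so that $\check F=C+S$. The substitution $a\mapsto -a$ gives $\tau_{-k}(n)=\left(\frac{-1}{n}\right)\tau_k(n)$. Replacing $k$ by $-k$ in the sums then shows the following.

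If $\left(\frac{-1}{n}\right)=1$, then $\sum_k\tau_k(n)S(kZ/n)=0$. Hence
$$\sum_k\tau_k(n)\hat F(kZ/n)=\sum_k\tau_k(n)C(kZ/n)=\sum_k\tau_k(n)\check F(kZ/n),$$
and here $G_k(n)=\tau_k(n)$.

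If $\left(\frac{-1}{n}\right)=-1$, then the $C$-part vanishes instead. Hence
$$\sum_k\tau_k(n)\hat F(kZ/n)=-i\sum_k\tau_k(n)S(kZ/n)=\sum_k(-i\tau_k(n))\check F(kZ/n),$$
and here $G_k(n)=-i\tau_k(n)$.

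In both cases the prefactor $\frac{1-i}{2}+\left(\frac{-1}{n}\right)\frac{1+i}{2}$ in (\ref{G_k}) equals $1$ or $-i$ respectively. This proves the first formula.

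\textbf{Step 3 (odd $d$).} Restrict to $d$ odd and split $d$ into residue classes $b$ modulo $2n$ with $b$ odd. Poisson summation now produces $\frac{Z}{2n}\sum_k \hat F\left(\frac{kZ}{2n}\right)\sum_{b}\left(\frac{b}{n}\right)e\left(\frac{bk}{2n}\right)$. By the Chinese remainder theorem, write $b=n+2c$ with $c$ running modulo $n$. Then $e\left(\frac{bk}{2n}\right)=(-1)^k e\left(\frac{ck}{n}\right)$ and $\left(\frac{b}{n}\right)=\left(\frac{2}{n}\right)\left(\frac{c}{n}\right)$. The inner sum therefore equals $(-1)^k\left(\frac{2}{n}\right)\tau_k(n)$. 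The symmetry argument of Step 2 applies verbatim, because $(-1)^{-k}=(-1)^k$ does not disturb the parity under $k\mapsto -k$.

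The main point needing care is Step 2: one must check the parity bookkeeping and confirm that the constant in (\ref{G_k}) matches exactly in both cases of $\left(\frac{-1}{n}\right)$. Everything else is a routine application of Poisson summation.
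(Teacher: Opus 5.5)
Your proposal is correct, and it follows the same route as the source the paper cites in place of a proof (Soundararajan's Lemma 2.6, also Li's Lemma 2.3): split $d$ into residue classes and apply Poisson summation, evaluate the odd-class exponential sum via $b=n+2c$, then use $\tau_{-k}(n)=\left(\frac{-1}{n}\right)\tau_k(n)$ together with the prefactor defining $G_k(n)$ (equal to $1$ or $-i$) to pass from $\hat F$ to $\check F$. The only implicit assumption, which the source shares, is that $n$ is a positive odd integer, so that $\left(\frac{\cdot}{n}\right)$ is periodic modulo $n$.
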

See Lemma 2.3 in \cite{Li} or Lemma 2.6 in \cite{S}.
We apply the following two lemmas several times in subsequent proofs. The proofs refer to Proposition 3.2, and Lemma 6.3 in \cite{Li} respectively. The function $G$ in the following lemmas is as shown in \eqref{eq.G}. In the rest of the paper, $(\frac{m}{n})$ denote the usual Kronecker symbol.
\begin{lemma}\label{lem.SMNt}
Let ${\sum}^\flat$ denotes a sum over fundamental discriminants. For $M, N\geq1$, $t\in\mathbb{R}$, we have the following estimate for the dyadic sum,
\begin{equation*}
\begin{aligned}
S^{\flat}(M, N, t)&={\sum_{M \leq|m|<2 M}}^\flat\left|\sum_n \frac{\lambda_f(n)}{n^{1/2+i t}} G\left(\frac{n}{N}\right)\left(\frac{m}{n}\right)\right|^2\\
&\ll(1+|t|)^2(M+N\log(2+N/M)),
\end{aligned}
\end{equation*}
the implied constant only depends on $f$.
\end{lemma}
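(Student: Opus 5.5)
The plan is to follow the Poisson-summation route of Li \cite{Li}, Proposition 3.2. I would drop the fundamental-discriminant restriction, expand the square, apply Lemma \ref{lem2.3} in $m$, and treat the zero frequency $k=0$ and the nonzero frequencies separately. The main obstacle is the nonzero frequencies, handled at the end.

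\textbf{Reduction.} Each summand of $S^\flat(M,N,t)$ is nonnegative. So I would bound $S^\flat$ by a smoothed sum over all integers $m$, with a fixed nonnegative Schwartz weight $\Phi(m/M)$ that is $\geq 1$ on $M\le |m|<2M$.

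\textbf{Making $n$ odd.} Lemma \ref{lem2.3} needs $n$ odd. I would write $n=2^a n'$ with $n'$ odd; the weight $G(n/N)$ restricts $a\ll \log N$, and $\lambda_f(2^a)$ is bounded by Deligne. For each $a$ the factor $\left(\frac{m}{2^a}\right)$ depends only on $m \bmod 8$. Splitting $m$ into residue classes mod $8$, and using Cauchy--Schwarz over the boundedly many classes (and over $a$ with the $2^{-a/2}$ decay), leaves a sum with $n$ odd at the cost of a constant.

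\textbf{Poisson summation.} Opening the square gives
\begin{equation*}
\sum_{n_1,n_2\ \mathrm{odd}}\frac{\lambda_f(n_1)\lambda_f(n_2)}{n_1^{1/2+it}n_2^{1/2-it}}G\Big(\frac{n_1}{N}\Big)G\Big(\frac{n_2}{N}\Big)\sum_m \Phi\Big(\frac mM\Big)\Big(\frac{m}{n_1n_2}\Big),
\end{equation*}
and Lemma \ref{lem2.3} turns the inner sum into $\frac{M}{n_1n_2}\sum_k G_k(n_1n_2)\check\Phi(kM/n_1n_2)$.

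\textbf{The term $k=0$.} By Lemma \ref{lem2.2}, $G_0(n_1n_2)=\varphi(n_1n_2)$ if $n_1n_2$ is a square and $0$ otherwise. I would parametrize $n_1=r s_1^2$, $n_2=r s_2^2$ with $r$ squarefree. Then Rankin--Selberg ($\sum_{n\le x}|\lambda_f(n)|^2\ll x$) together with Deligne's bound on the $s_i$ gives $O(M)$. This uses that $G$ localizes $n$ dyadically at $N$, so $\sum_{n\sim N}|\lambda_f(n)|^2/n\ll 1$ and no $\log N$ appears. The phases $n^{\pm it}$ cancel on the diagonal and are harmless here.

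\textbf{Nonzero $k$.} By repeated integration by parts, $\check\Phi$ decays rapidly, so effectively $0<|k|\ll N^2M^{-1+\epsilon}$; in particular this range is empty when $N\le M^{1/2-\epsilon}$. Otherwise I would:
\begin{enumerate}
\item Mellin-invert $G(n_1/N)$, $G(n_2/N)$ and $\check\Phi$ to separate the variables. The shifts in $s$ coming from $n^{\pm it}$ and from the Mellin transforms cost the factor $(1+|t|)^2$ when contours are moved, because the Mellin transform of $G$ must be differentiated or integrated against polynomial growth in $t$.
\item Write $k=k_1k_2^2$ with $k_1$ squarefree. Using the multiplicativity of $G_k$ from Lemma \ref{lem2.2}, express $\sum_{n}\lambda_f(n)G_k(n)n^{-w}$ as $L(w',f\otimes\chi_{k_1})$ times an Euler product absolutely convergent in a wider strip, the correction factors living on $p\mid k$.
\item Shift contours to the critical line and bound the resulting mean value over $|k|\ll N^2/M$. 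This is a second moment of twisted $L$-values. I would obtain it either from a Heath-Brown-type quadratic large sieve, which loses only $\epsilon$-powers that must then be removed, or, as Li does, from a self-improving argument in which the dual sum is itself a sum of the shape $S^\flat(N^2/M, N, t')$.
\end{enumerate}
The recursion in step 3 should produce the bound $N\log(2+N/M)$: each dyadic block of $k$ contributes $O(N)$, and there are $\ll\log(2+N/M)$ blocks.

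\textbf{Main obstacle.} The delicate point is closing the recursion in step 3 without losing an $N^\epsilon$. If the self-improving argument does not close, I would instead accept the Heath-Brown bound $(MN)^\epsilon(M+N)$ for the dual moment and try to sharpen it dyadically using Rankin--Selberg for $\lambda_f$.
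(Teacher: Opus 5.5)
There is a genuine gap, and it is where the whole content of the lemma lies. The paper does not reprove this estimate; it quotes Li's Proposition~3.2, whose purpose is to remove the $\epsilon$-loss when $N$ is close to $M$ (the range $U<N\le X$ used in Proposition~\ref{Pro3.2}). Your nonzero-frequency analysis leaves exactly this case open.

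Neither route you name closes it:
\begin{itemize}
\item The fallback, Heath-Brown's quadratic large sieve, gives only $(MN)^{\epsilon}(M+N)$. That is the $X^{1+\epsilon}$ which Li improves to $X$.
\item The recursion, as described, feeds the dual sum back in as $S^\flat(N^2/M,N,t')$. The map $(M,N)\mapsto(N^2/M,N)$ is an involution with fixed point $N=M$. So for $N\asymp M$ you recover the same sum at the same scale, giving only $S\le C_0M+C_1S$ with no reason for $C_1<1$.
\end{itemize}

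The bookkeeping is also off. The range $1\le|k|\le N^2/M$ contains $\asymp\log(2+N^2/M)$ dyadic blocks, not $\log(2+N/M)$, so ``$O(N)$ per block'' gives $M\log M$ at $N=M$. Moreover, the lemma's own bound is useless on small blocks. On $\Re s=1/2$ a block $|k|\sim\mathcal K$ carries weight $\asymp(M/\mathcal K)^{1/2}$. Hence the term $N\log(2+N/\mathcal K)$ at $\mathcal K\asymp 1$ already contributes $\gg M^{1/2}N$, which exceeds $M+N\log(2+N/M)$ whenever $N\gg M^{1/2}$, i.e.\ whenever there are dual terms at all.

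At a minimum you need two ingredients. First, factor the $(n_1,n_2)$-sum as $L(\cdot,f\otimes\chi_{k_1})^2$ times an absolutely convergent factor. Second, use the functional equation (conductor $\asymp k_1^2$) to replace $n$-sums of length $N>|k_1|$ by sums of length $\asymp k_1^2/N$. Even then, the self-dual range $N\asymp M$ needs the further argument from Li, which your proposal does not supply, as you acknowledge.

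Your $k=0$ step is also incorrect as written. The condition $n_1n_2=\square$ does not force $n_1=n_2$. Writing $n_1=rs_1^2$, $n_2=rs_2^2$, all pairs $s_1\asymp s_2\le\sqrt N$ occur. Bounding with absolute values (Cauchy--Schwarz plus Rankin--Selberg) gives an $O(1)$ for each dyadic range of $s_1,s_2$, hence $O(M\log N)$ at best. With Deligne's bound $|\lambda_f(s^2)|\le\tau(s^2)$, as you propose, it is a higher power of $\log N$. The estimate $\sum_{n\sim N}|\lambda_f(n)|^2/n\ll 1$ controls only the terms with $s_1=s_2$.

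To get $O(M)$ you must keep the signs:
\begin{itemize}
\item Mellin-invert both weights $G(n_i/N)$.
\item Use the factorization of $\sum_{n_1n_2=\square}\lambda_f(n_1)\lambda_f(n_2)n_1^{-1/2-u}n_2^{-1/2-v}\prod_{p\mid n_1n_2}(1-1/p)$ as $\zeta(1+u+v)L(1+2u,\operatorname{sym}^2 f)L(1+2v,\operatorname{sym}^2 f)L(1+u+v,\operatorname{sym}^2 f)$ times a factor absolutely convergent in a wider region, as in \eqref{eq.z_q'}.
\item Observe that the pole at $u+v=0$ (where $N^{w_1+w_2}=1$) contributes $\int\widetilde G(w)\widetilde G(-w)(\cdots)\,dw$. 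This is bounded independently of $N$ because each $n_i$ carries a fixed smooth dyadic weight.
\end{itemize}
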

\begin{lemma}\label{lem.6.3}
For any real $\mathcal{X}\geq1$, $N\geq1$, $t\in\mathbb{R}$, and positive integer $a$, we have
\begin{equation*}
\sum_{\substack{(d, 2)=1\\d\leq\mathcal{X}}}\left|\sum_{(n, a)=1}\frac{\lambda_f(n)}{n^{1/2+it}}\left(\frac{8d}{n}\right)G\left(\frac{n}{N}\right)\right|^2\ll \tau(a)^5\mathcal{X}(1+|t|)^3\log(2+|t|),
\end{equation*}
where $\tau(\cdot)$ is usually divisor function, the implied constant only depends on $f$.
\end{lemma}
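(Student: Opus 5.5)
The plan is to reduce the estimate to the large sieve bound of Lemma \ref{lem.SMNt}. This takes three moves: passing from the odd integers $d$ to fundamental discriminants, removing the coprimality condition $(n,a)=1$, and eliminating the dependence on $N$.

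\textbf{Reduction to fundamental discriminants.} Write each odd $d\le\mathcal{X}$ as $d=d_0l^2$ with $d_0$ squarefree and $l$ odd. Then
$$\left(\frac{8d}{n}\right)=\left(\frac{8d_0}{n}\right)\mathbf 1_{(n,l)=1},$$
and $8d_0$ is (up to sign) a fundamental discriminant with $|8d_0|\le 8\mathcal{X}/l^2$. The left-hand side is therefore at most
$$\sum_{l\ \mathrm{odd}}\ {\sum_{|m|\le 8\mathcal{X}/l^2}}^\flat\Bigl|\sum_{(n,al)=1}\frac{\lambda_f(n)}{n^{1/2+it}}\Bigl(\frac{m}{n}\Bigr)G\Bigl(\frac nN\Bigr)\Bigr|^2 .$$
It thus suffices to prove the bound $\ll\tau(b)^5 Y(1+|t|)^3\log(2+|t|)$ for the sum over fundamental discriminants $|m|\le Y$ with coprimality modulus $b=al$. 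Since $\tau(al)\le\tau(a)\tau(l)$ and $\sum_l\tau(l)^5/l^2<\infty$, the sum over $l$ then converges and gives the claim.

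\textbf{Removing the coprimality condition.} Multiplicativity of $\lambda_f$ gives the identity
$$\sum_{(n,b)=1}\frac{\lambda_f(n)\chi(n)}{n^{s}}=L(s,f\otimes\chi)\prod_{p\mid b}\bigl(1-\lambda_f(p)\chi(p)p^{-s}+\chi_0(p)\chi(p)^2p^{-2s}\bigr).$$
The Euler product on the right is a finite Dirichlet polynomial $\sum_{k\mid b^2}c(k)\chi(k)k^{-s}$ with at most $3^{\omega(b)}\le\tau(b)^2$ terms and $|c(k)|\le 2^{\omega(b)}$. Since $G(n/N)$ is a Mellin integral of $n^{-w}$, the smoothed sum over $(n,b)=1$ becomes
$$\sum_k c(k)\chi(k)k^{-1/2-it}\sum_n\lambda_f(n)\chi(n)n^{-1/2-it}G\Bigl(\frac{nk}{N}\Bigr).$$
Applying Cauchy--Schwarz over $k$ costs a factor of order $\tau(b)^{O(1)}$, presumably exactly the $\tau(b)^5$ in the statement. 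Each resulting sum is of the shape in Lemma \ref{lem.SMNt} with $N$ replaced by $N/k$, summed over dyadic ranges $M\le Y$.

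\textbf{Eliminating $N$ (the main obstacle).} Lemma \ref{lem.SMNt} yields $(1+|t|)^2\bigl(M+N\log(2+N/M)\bigr)$, which is fine when $N\ll Y(1+|t|)$ but grows with $N$, whereas the claimed bound is uniform in $N$. For large $N$ I will write the inner sum as a Mellin integral
$$\frac1{2\pi i}\int\widetilde G(w)N^{w}L\bigl(\tfrac12+it+w,f\otimes\chi_m\bigr)\,dw$$
(with the finite Euler factor above attached) and shift the contour to $\Re w=0$. The continuation of $L(s,f\otimes\chi_m)$ is entire, so no poles arise, and the result is independent of $N$. The rapid decay of $\widetilde G$ together with Cauchy--Schwarz in $w$ reduces matters to the mean square
$$\int|\widetilde G(iu)|\ {\sum_{|m|\le Y}}^\flat\bigl|L(\tfrac12+it+iu,f\otimes\chi_m)\bigr|^2\,du.$$
For each $u$, an approximate functional equation writes $L(\tfrac12+iv,f\otimes\chi_m)$ as two smoothed sums of effective length $\asymp |m|(1+|v|)$, with $v=t+u$. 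Applying Lemma \ref{lem.SMNt} with $N\asymp M(1+|v|)$ gives $\ll Y(1+|v|)^3\log(2+|v|)$. Integrating against $\widetilde G$ yields $Y(1+|t|)^3\log(2+|t|)$, matching the claimed power of $(1+|t|)$ and the logarithm.

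The delicate points are bookkeeping ones. First, the root-number factor and the shift by $k$ in the approximate functional equation must be handled uniformly in $m$. Second, the divisor-function losses from the finite Euler factor must stay within $\tau(b)^5$.
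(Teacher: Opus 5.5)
The paper gives no argument for this lemma; it quotes Li's Lemma 6.3. Your Steps 1 and 2 are sound reductions. Writing $d=d_0l^2$ and absorbing $l$ into the coprimality modulus, then expanding $\prod_{p\mid b}L_p(s,f\otimes\chi)^{-1}$ as a short Dirichlet polynomial and applying Cauchy--Schwarz over $k$, brings everything down to a single smoothed block $\sum_n\lambda_f(n)\chi_m(n)n^{-1/2-it}G(nk/N)$ over fundamental discriminants.

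\textbf{The gap is in Step 3, and it costs a factor $\log Y$.} You assert that Lemma \ref{lem.SMNt} with $N\asymp M(1+|v|)$ gives ${\sum}^\flat_{|m|\le Y}|L(\frac12+iv,f\otimes\chi_m)|^2\ll Y(1+|v|)^3\log(2+|v|)$. This is false. At $v=0$ it would make the second moment of the family $O(Y)$, whereas the subfamily $m=8d$ alone already gives $\sim CY\log Y$ with $C>0$ (Soundararajan--Young under GRH, Li unconditionally).

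The error is that the approximate functional equation produces a sum over all $n\lesssim |m|(1+|v|)$, i.e.\ about $\log(Y(1+|v|))$ dyadic blocks. Lemma \ref{lem.SMNt} controls only one block at a time. This cannot be repaired inside your route. Moving to $\Re w=0$ and applying Cauchy--Schwarz in $u$ discards the oscillation of $N^{iu}$, which is exactly what records that the original sum is a single block. What remains is a second moment of complete $L$-values, and that genuinely has size $Y\log Y$. As written, the argument proves the lemma only with an extra factor $\log\mathcal{X}$. That weaker bound would still suffice for Lemma \ref{lem.T}, where $Y_1=(\log X)^{20}$, but it is not the stated bound.

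\textbf{What is missing} is to keep a single block and dualize it when it is long, separately on each dyadic range $M\le|m|<2M$, with $N/k$ in place of $N$ after your Step 2.
\begin{itemize}
\item If $N\le M(1+|t|)$, Lemma \ref{lem.SMNt} gives $\ll(1+|t|)^3M\log(2+|t|)$ directly. This is where the shape $(1+|t|)^3\log(2+|t|)$ in the statement comes from.
\item If $N>M(1+|t|)$, shift the $w$-contour into $\Re w<0$ and apply the functional equation of $L(s,f\otimes\chi_m)$, adjusting finitely many Euler factors at $p\mid q$ when $(m,q)>1$. The root number has modulus one. The ratio $\Gamma(1-it-w)/\Gamma(1+it+w)$ does not depend on $m$. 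The factor $|m|^{-2w}$ is separated from the $n$-sum by Mellin inversion on the dyadic range. The block becomes a single dual block at $n\asymp qM^2(1+|t|)^2/N\ll M(1+|t|)$, with a weight decaying away from that point, and Lemma \ref{lem.SMNt} again gives $\ll(1+|t|)^3M\log(2+|t|)$.
\end{itemize}
Each dyadic range then costs $O(M(1+|t|)^3\log(2+|t|))$ uniformly in $N$. The geometric sum over $M\le Y$ gives $Y(1+|t|)^3\log(2+|t|)$ with no $\log Y$.

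The dualization threshold has to be $M(1+|t|)$ for each range, not $Y(1+|t|)$ as your Step 2 suggests. For $N\asymp Y$, summing $(1+|t|)^2\bigl(M+N\log(2+N/M)\bigr)$ over all dyadic $M\le Y$ already gives order $Y(\log Y)^2$.
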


The following result is due to Yun--Zhang \cite[Proposition B.1]{YZ1}.
\begin{lemma}[Yun--Zhang]\label{lem.Yun-Zhang}
Let $\phi(s)$ be an entire function with the following properties:
\begin{itemize}
    \item[(1)] it has a functional equation $\phi(-s) = \pm\phi(s)$;
    \item[(2)] for $s \in \BR $ such that $s \gg 0$, we have $\phi(s) \in \mathbb{R}_{>0}$;
    \item[(3)] the order $\rho(\phi)$ of $\phi(s)$ is at most $1$;
    \item[(4)] (RH) the only zeros of $\phi(s)$ lie on the imaginary axis $\Re(s) = 0$.
\end{itemize}
Then for all $l \geq 0$, we have
\begin{equation*}
\phi^{(l)}(0) := \left. \frac{d^l}{ds^l} \right|_{s=0} \phi(s) \geq 0.
\end{equation*}
Moreover, if $\phi(s)$ is not a constant function, we have
\begin{equation*}
\phi^{(l_0)}(0) \neq 0 \implies \phi^{(l_0+2i)}(0) \neq 0 \quad \text{for all } l_0 \text{ and } i \in \mathbb{Z}_{\geq 0}.
\end{equation*}
\end{lemma}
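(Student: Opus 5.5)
The plan is to make the Hadamard factorization of $\phi$ completely explicit and read off its Taylor coefficients at $s=0$. First I would pin down the shape of the zero set. Condition (2) says $\phi$ is real on a real half-line, so by the identity theorem $\phi(\bar s)=\overline{\phi(s)}$ for all $s$; hence the zeros are closed under complex conjugation. By (4) every zero is either $s=0$ or of the form $it$ with $t\in\BR\setminus\{0\}$. Conjugation then pairs $it$ with $-it$, with equal multiplicities. (The functional equation (1) gives the same pairing.) Let $m\ge 0$ be the order of vanishing of $\phi$ at $0$, and list the nonzero zeros as $\pm it_k$ with $t_k>0$, repeated with multiplicity.

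Next I would apply Hadamard's theorem, using (3): since $\rho(\phi)\le 1$, the genus is at most $1$, and
\begin{equation*}
\phi(s)=s^m e^{a+bs}\prod_{\rho\neq 0}\Bigl(1-\frac{s}{\rho}\Bigr)e^{s/\rho}, \qquad \sum_{\rho\neq0}|\rho|^{-2}<\infty .
\end{equation*}
Grouping $\rho=it_k$ with $-\rho$, the exponential factors cancel and each pair contributes $1+s^2/t_k^2$. This gives
\begin{equation*}
\phi(s)=e^{a+bs}s^m\prod_k\Bigl(1+\frac{s^2}{t_k^2}\Bigr),
\end{equation*}
where the product converges absolutely and locally uniformly because $\sum_k t_k^{-2}<\infty$. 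The factor $s^m\prod_k(\cdots)$ has parity $(-1)^m$. Hence (1) forces $e^{2bs}$ to be constant, so $b=0$. Finally, (2) says $\phi(s)>0$ for large real $s$, while $s^m\prod_k(1+s^2/t_k^2)>0$ there; so $C:=e^{a}>0$.

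The key step is positivity. Each partial product $C s^m\prod_{k\le K}(1+s^2/t_k^2)$ is a polynomial with nonnegative coefficients. Its coefficients are nondecreasing in $K$ and converge to the Taylor coefficients of $\phi$, by locally uniform convergence together with Cauchy's formula. Therefore every Taylor coefficient of $\phi$ is $\ge 0$, and $\phi^{(l)}(0)=l!\cdot(\text{coefficient of }s^l)\ge 0$.

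For the nonvanishing statement, the same expansion shows that only the monomials $s^{m+2j}$ occur. If $\phi^{(l_0)}(0)\ne 0$, then $l_0=m+2j_0$ for some $j_0\ge 0$. The coefficient of $s^{m+2j}$ is at least $C\,t_1^{-2j}>0$ for every $j\ge0$, because $C t_1^{-2j}$ is one of its nonnegative summands. This gives $\phi^{(l_0+2i)}(0)\neq0$ for all $i\ge 0$.

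The main obstacle is exactly this last point: it needs at least one nonzero zero $it_1$. "Not constant" alone is not quite enough, since $\phi(s)=s$ satisfies (1)–(4) but $\phi'''(0)=0$. So I would state and verify the extra hypothesis that $\phi$ has a zero off $0$, i.e. that $\phi$ is not a monomial. This holds in the application, where $\phi(s)=\Lambda(1/2+s,f\otimes\chi_d)$ has infinitely many zeros, since a completed $L$-function of this type has order exactly $1$ with infinitely many zeros. The remaining care points are the convergence justifications (genus at most $1$, absolute convergence of $\sum t_k^{-2}$, and the passage from coefficientwise monotone limits to the Taylor coefficients), which are routine.
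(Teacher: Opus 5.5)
Your positivity argument is correct, and it is the natural Hadamard-product proof of the cited Yun--Zhang result; the paper gives no proof of its own. You are also right that the second assertion, with only ``$\phi$ not constant'', is false: $\phi(s)=s$ is a counterexample.

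\textbf{The gap is in your repair.} Your expansion reads
\[
\phi(s)=Cs^m\prod_k\Bigl(1+\frac{s^2}{t_k^2}\Bigr)=C\sum_{j\ge 0}e_j\bigl(t_1^{-2},t_2^{-2},\dots\bigr)\,s^{m+2j}.
\]
Here $e_j$ is the $j$-th elementary symmetric function of the numbers $t_k^{-2}$, listed with multiplicity. It is a sum of products over $j$ \emph{distinct} indices, and it vanishes once $j$ exceeds the number of entries. So $Ct_1^{-2j}$ is not a summand unless $it_1$ has multiplicity at least $j$, and your claimed lower bound fails.

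A concrete counterexample is $\phi(s)=s^m(1+s^2)$. It satisfies (1)--(4), has the nonzero zeros $\pm i$, and is not a monomial. Yet $\phi^{(m)}(0)=m!\neq 0$ while $\phi^{(m+4)}(0)=0$. Already $1+s^2$ refutes the lemma as printed. Hence ``$\phi$ has a zero off $0$'' is not a sufficient extra hypothesis.

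What your expansion actually shows is that $\phi^{(m+2j)}(0)>0$ exactly when the list $(t_k)$ has at least $j$ entries. So the second assertion holds for all $l_0$ and all $i\ge 0$ precisely when $\phi$ has infinitely many zeros. Since you showed $b=0$, this is equivalent to $\phi$ not being a polynomial. Under that hypothesis the correct bound is $e_j\ge\prod_{k=1}^{j}t_k^{-2}>0$.

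Your remark that the completed $L$-function in the application has infinitely many zeros is exactly what the paper's use of the lemma in the proof of Theorem~\ref{thm3} needs. So the fix is to replace ``not a monomial'' by ``has infinitely many zeros'', and the bound $Ct_1^{-2j}$ by $C\prod_{k\le j}t_k^{-2}$.
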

Now we can give the proof of Theorem \ref{thm3}.
\begin{proof}
The completed $L$-function of $E$ is defined by
\begin{equation}
\Lambda\left(E^{(d)}, s\right)=\left(\frac{\sqrt{q}\lvert d\rvert}{2 \pi}\right)^s \Gamma(s)L(E^{(d)}, s).
\end{equation}
According to Leibniz formula, for any $m\geq0$, we have
\begin{equation*}
\begin{aligned}
L^{(m)}(E^{(d)}, s)&=\sum_{k=0}^{m}\left(\begin{matrix}m\\k\end{matrix}\right)\Lambda^{(k)}(E^{(d)}, s)\\
&\quad\times\sum_{t=0}^{m-k}\left(\begin{matrix}m-k\\t\end{matrix}\right)(-1)^{m-k-t}\left(\log\frac{\sqrt{q}\lvert d\rvert}{2\pi}\right)^{m-k-t}
\left(\frac{\sqrt{q}\lvert d\rvert}{2\pi}\right)^{-s}\frac{d^{t}}{ds^{t}}\Gamma(s)^{-1}.
\end{aligned}
\end{equation*}
It can be easily deduced from Fa\`{a} di Bruno's formula\cite{J} (take $g(s)=e^{s}$, $f(t)=-\log\Gamma(t)$) that
\begin{equation*}
(\Gamma(s)^{-1})^{(t)}|_{s=1}=\sum_{\substack{k_1, k_2,\cdots,k_t\geq0\\ k_1+2k_2+\cdots+tk_t=t}}
\frac{(-1)^{k_1+k_2+\cdots+k_t+t}t!}{k_1!k_2!\cdots k_t!}\frac{\gamma^{k_1}\zeta(2)^{k_2}\zeta(3)^{k_3}\cdots\zeta(t)^{k_t}}{2^{k_2}3^{k_3}\cdots t^{k_t}},\quad t\geq1,
\end{equation*}
where $\gamma$ is Euler's constant, then
\begin{equation*}
\begin{aligned}
L^{(m)}(E^{(d)}, 1)&=\sum_{k=0}^{m}\Lambda^{(k)}(E^{(d)}, 1)\sum_{t=1}^{m-k}\frac{m!}{k!(m-k-t)!}
(-1)^{m-k-t}\left(\log\frac{\sqrt{q}\lvert d\rvert}{2\pi}\right)^{m-k-t}\left(\frac{\sqrt{q}\lvert d\rvert}{2\pi}\right)^{-1}\\
&\quad\times\sum_{\substack{k_1, k_2,\cdots,k_{t}\geq0\\ k_1+2k_2+\cdots+tk_{t}=t}}\frac{(-1)^{k_1+k_2+\cdots+k_t+t}}{k_1!k_2!\cdots k_{t}!}\frac{\gamma^{k_1}\zeta(2)^{k_2}\zeta(3)^{k_3}\cdots\zeta(t)^{k_{t}}}{2^{k_2}3^{k_3}\cdots t^{k_t}}\\
&+\sum_{k=0}^{m}\Lambda^{(k)}(E^{(d)}, 1)\frac{m!}{k!(m-k)!}(-1)^{m-k}\left(\log\frac{\sqrt{q}\lvert d\rvert}{2\pi}\right)^{m-k}\left(\frac{\sqrt{q}\lvert d\rvert}{2\pi}\right)^{-1}.
\end{aligned}
\end{equation*}
Since $c_{r_d}^{(d)}$ is the first non-zero coefficient of $L(E^{(d)}, s)$ and $s=1$ is not the pole of $\Gamma^{(n)}(s)$ for each $n\in\mathbb{N}$, we have
\begin{equation*}
\Lambda(E^{(d)}, 1)=\Lambda'(E^{(d)}, 1)=\cdots=\Lambda^{({r_d}-1)}(E^{(d)}, 1)=0,
\end{equation*}
and
\begin{equation*}
\Lambda^{({r_d})}(E^{(d)}, 1)=\left(\frac{\sqrt{q}\lvert d\rvert}{2 \pi}\right)L^{(r_d)}\left(E^{(d)}, 1\right)\neq0.
\end{equation*}
Applying Lemma \ref{lem.Yun-Zhang}, we have
\begin{equation}\label{eq.3.1}
\Lambda^{({r_d}+2j)}(E^{(d)}, 1)>0,\quad j\geq0.
\end{equation}
Hence, for $m\geq {r_d}$, it follows that
\begin{equation}\label{eq.1.3}
\begin{aligned}
L^{(m)}(E^{(d)}, 1)&=\sum_{k={r_d}}^{m}\Lambda^{(k)}(E^{(d)}, 1)\sum_{t=1}^{m-k}\frac{m!}{k!(m-k-t)!}
(-1)^{m-k-t}\left(\log\frac{\sqrt{q}\lvert d\rvert}{2\pi}\right)^{m-k-t}\\
&\quad\times\left(\frac{\sqrt{q}\lvert d\rvert}{2\pi}\right)^{-1}\sum_{\substack{k_1, k_2,\cdots,k_{t}\geq0\\ k_1+2k_2+\cdots+tk_{t}=t}}\frac{(-1)^{k_1+k_2+\cdots+k_t+t}}{k_1!k_2!\cdots k_{t}!}\frac{\gamma^{k_1}\zeta(2)^{k_2}\zeta(3)^{k_3}\cdots\zeta(t)^{k_{t}}}{2^{k_2}3^{k_3}\cdots t^{k_t}}\\
&+\sum_{k=r_d}^{m}\Lambda^{(k)}(E^{(d)}, 1)\frac{m!}{k!(m-k)!}(-1)^{m-k}\left(\log\frac{\sqrt{q}\lvert d\rvert}{2\pi}\right)^{m-k}\left(\frac{\sqrt{q}\lvert d\rvert}{2\pi}\right)^{-1}.
\end{aligned}
\end{equation}
If the root number of $E^{(d)}$ is $+1$, then all the odd-order derivatives of $\Lambda(E^{(d)}, s)$ are $0$ at the central value. Therefore, in \eqref{eq.1.3}, there are only terms with even-order derivatives (Similarly, when the root number of $E^{(d)}$ is $-1$, there are only terms with odd-order derivatives). Considering the coefficient of $\Lambda^{(2k)}(E^{(d)}, 1)$, i.e.,
\begin{equation}\label{eq.1.7}
\begin{aligned}
&\frac{m!}{(2k)!(m-2k)!}(-1)^{m-2k}\left(\frac{\sqrt{q}\lvert d\rvert}{2\pi}\right)^{-1}\left\{\left(\log\frac{\sqrt{q}\lvert d\rvert}{2\pi}\right)^{m-2k}+
\sum_{t=1}^{m-2k}\left(\log\frac{\sqrt{q}\lvert d\rvert}{2\pi}\right)^{m-2k-t}\right.\\
&\left.\times\frac{(m-2k)!}{(m-2k-t)!}\sum_{\substack{k_1, k_2,\cdots,k_{t}\geq0\\ k_1+2k_2+\cdots+tk_{t}=t}}
\frac{(-1)^{k_1+k_2+\cdots+k_t}}{k_1!k_2!\cdots k_{t}!}\frac{\gamma^{k_1}\zeta(2)^{k_2}\zeta(3)^{k_3}\cdots\zeta(t)^{k_{t}}}{2^{k_2}3^{k_3}\cdots t^{k_t}}\right\},
\end{aligned}
\end{equation}
based on the series expansion of $e^x$, it is obvious that
\begin{equation*}
\begin{aligned}
\left|\sum_{\substack{k_1, k_2,\cdots,k_{t}\geq0\\ k_1+2k_2+\cdots+tk_{t}=t}}
\frac{(-1)^{k_1+k_2+\cdots+k_t}}{k_1!k_2!\cdots k_{t}!}\frac{\gamma^{k_1}\zeta(2)^{k_2}\zeta(3)^{k_3}\cdots\zeta(t)^{k_{t}}}{2^{k_2}3^{k_3}\cdots t^{k_t}}\right|
&\leq\sum_{k_1=0}^{\infty}\frac{\gamma^{k_1}}{k_1!}\prod_{i=2}^{t}\sum_{k_i=0}^{\infty}\frac{|\zeta(i)|^{k_i}}{i^{k_i}k_i!}\\
&=e^{\gamma}e^{\sum_{i=2}^{t}\frac{|\zeta(i)|}{i}},
\end{aligned}
\end{equation*}
due to the fact that $\frac{\sigma-1}{\sigma}\leq|\zeta(s)|\leq\frac{\sigma}{\sigma-1}$ for $\Re s=\sigma>1$ and the Abel summation formula, then there is
\begin{equation}\label{eq.1.6}
e^{\gamma}e^{\sum_{i=2}^{t}\frac{|\zeta(i)|}{i}}\leq e^{\gamma}e^{\sum_{i=1}^{t}\frac{1}{i}}\leq e^{\gamma+1}t.
\end{equation}
When $\lvert d \rvert \geq\frac{2\pi}{\sqrt{q}}e^{h^3e^{\gamma+1}}$ for some $h\in\BZ^{+}$ and $r_d\leq m\leq h$, it can be easily concluded from \eqref{eq.1.6} that the expression within the braces in \eqref{eq.1.7} is greater than zero. Thus, the sign of the coefficient depends on $(-1)^{m-2k}$.
For all $k\in\mathbb{N}$, the sign of $(-1)^{m-2k}$ is only related to $m$, then the coefficient of $\Lambda^{(2k)}(E^{(d)}, 1)$ is either positive or negative simultaneously. In view of \eqref{eq.3.1}, it can be concluded that $L^{(m)}(E^{(d)}, 1)\neq0$, and its sign is $(-1)^{m}$. (When the root number is $-1$, the same result can be obtained.)  The proof of Theorem \ref{thm3} is completed.
\end{proof}

\section{A result on shifted second moment of quadratic twists of modular $L$-function}

Let $\sum^{*}$ denote a sum over squarefree integers. In this context, it is of high interest to understand moments of the form
\begin{equation}
M(l):={\sum_{\substack{0<8d<X\\(d,2)=1}}}^{*}L(1/2, f\otimes\chi_{8d})^{l},\;M'(l):={\sum_{\substack{0<8d<X\\(d,2)=1}}}^{*}L'(1/2, f\otimes\chi_{8d})^{l}.
\end{equation}
Based on random matrix theory, Keating and Snaith\cite{K} conjectured that
\begin{equation}\label{eq.1.1}
M(l)\sim C(l,f)X(\log X)^{\frac{l(l-1)}{2}},\;M'(l)\sim C'(l,f)X(\log X)^{\frac{l(l+1)}{2}}
\end{equation}

For $l=1$, Shen \cite{SQ} established the conjectured asymptotics for a smoothed version of $M(1)$ and $M'(1)$ with an error term $O(X^{\frac{1}{2}+\varepsilon})$, following the recursive method of Heath--Brown \cite{HB} and Young \cite{YM1,YM2}. Other related results on the first moment include \cite{B,I1,Luo,Mu1,Mu2,MM,P,RS,ST}. For the second moment, Soundararajan \cite{S1} showed $M(2)\ll X(\log X)^{1+\varepsilon}$ under GRH, and Soundararajan and Young \cite{SY} later proved the conjectured asymptotic for $M(2)$ conditionally. This was subsequently made unconditional by Li \cite{Li} with an error term $O(X(\log X)^{\frac{1}{2}+\varepsilon})$, which can be further improved to $O(X(\log\log X)^4)$. This significant development has provided a key technique for studying the second moment of modular $L$-functions and their derivatives. For further exploration of related problems, the reader is referred to references \cite{H,KMSS,SQ2,SQ3,Zh}.

We extend Li's\cite{Li} result to the shifted second moment of the quadratic twists of modular $L$-functions. The following theorem plays a crucial role in the proof of the main theorems.
\begin{theorem}\label{thm3.1}
Let $J(x): (0, \infty)\rightarrow\mathbb{C}$ be a smooth function compactly supported in $[1/2, 2]$. Let $U=X/(\log X)^A$, where $A$ is a sufficiently large constant. Assume $\lvert\alpha\rvert, \lvert\beta\rvert\leq\frac{1}{\log X}$, then for any normalized primitive form $f$ with odd level $q$ and weight $2$, we have
\begin{equation}\label{eq.LL}
\begin{aligned}
&\quad{\sum_{(d, 2q)=1}}^*L\left(\tfrac{1}{2}+\alpha, f\otimes\chi_{8d}\right)L\left(\tfrac{1}{2}+\beta, f\otimes\chi_{8d}\right)J\left(\frac{8d}{X}\right)\\
&=\frac{X}{2\pi^2}\sum_{\epsilon_1, \epsilon_2\in\{\pm1\}}
(-\eta)^{\delta_1+\delta_2}\gamma_{\alpha}^{\delta_1}\gamma_{\beta}^{\delta_2}X^{-2\delta_1\alpha-2\delta_2\beta}\widetilde{J}(1-2\delta_1\alpha-2\delta_2\beta)
R(\epsilon_1\alpha, \epsilon_2\beta)\\
&\quad+O_q(X(\log\log X)^{4}),
\end{aligned}
\end{equation}
the big-$O$ is depending on $J$ and $q$. Here, $\widetilde{J}(\omega)=\int_{0}^{+\infty}J(x)x^{\omega-1}dx$ is the Mellin transform of $J$, $R(\epsilon_1\alpha, \epsilon_2\beta)=Z_{q'}(\epsilon_1\alpha, \epsilon_2\beta)+U^{-(\epsilon_1\alpha+\epsilon_2\beta)}W_{q', \epsilon_2\beta}(-\epsilon_1\alpha, -\epsilon_2\beta)$ and $Z_{q'}(\epsilon_1\alpha, \epsilon_2\beta)$ is defined by
\begin{equation}\label{eq.Z_q.1}
Z_{q'}(u, v)=\mathop{\sum\sum}_{\substack{q'n_1 n_2=\square \\ \left(n_1 n_2, 2\right)=1}}\frac{\lambda_f\left(n_1\right) \lambda_f\left(n_2\right)}{n_1^{1/2+u} n_2^{1/2+v}}\prod_{p \mid qn_1 n_2}\left(1-\frac{1}{p+1}\right),
\end{equation}
where $q'=1$ or $q'=q$. $W_{q', \epsilon_2\beta}(-\epsilon_1\alpha, -\epsilon_2\beta)$ is defined by
\begin{equation}\label{eq.W}
\begin{aligned}
&\quad W_{q', \epsilon_2\beta}(-\epsilon_1\alpha, -\epsilon_2\beta)\\
&=\frac{g_{\epsilon_2\beta}(-\epsilon_1\alpha-\epsilon_2\beta)}{-\epsilon_1\alpha-\epsilon_2\beta}L(1, \operatorname{sym}^2 f)L(1+2\alpha, \operatorname{sym}^2 f)L(1-2\alpha, \operatorname{sym}^2 f)Z_{q'}^*\left(\tfrac{1}{2}+\alpha, \tfrac{1}{2}-\alpha\right),
\end{aligned}
\end{equation}
$g_{\epsilon_2\beta}$ and $Z_{q'}^*$ are given by formulas \eqref{eq.W,g} and \eqref{eq.z_q'}, respectively. The parameters $\delta_i$ and $q'$ is defined by
\begin{equation*}
q'=
\begin{cases}
1,\;\epsilon_1=\epsilon_2,\\
q,\;\epsilon_1\neq\epsilon_2,
\end{cases}
\quad
\delta_i=
\begin{cases}
0,\;\epsilon_i=1,\\
1,\;\epsilon_i=-1,
\end{cases}
\end{equation*}
for $i=1, 2$ respectively. Finally, the symbol $\gamma_\alpha$, $\gamma_{\beta}$ are
\begin{equation*}
\left(\frac{\sqrt{q}}{2\pi}\right)^{-2\alpha}\frac{\Gamma(1-\alpha)}{\Gamma(1+\alpha)},\quad
\left(\frac{\sqrt{q}}{2\pi}\right)^{-2\beta}\frac{\Gamma(1-\beta)}{\Gamma(1+\beta)}.
\end{equation*}
\end{theorem}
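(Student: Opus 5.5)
We follow Li's method for the second moment, carrying the two shifts $\alpha,\beta$ through every step. First, apply the approximate functional equation to each of the two $L$-values. For the shifted product we use the symmetric form coming from the functional equation of $\Lambda(s,f\otimes\chi_{8d})$, with root number $-\eta\chi_{8d}(-q)$. This gives four terms indexed by $(\epsilon_1,\epsilon_2)\in\{\pm1\}^2$. When $\epsilon_i=-1$ the shift is replaced by its negative and acquires the factor $-\eta\,\chi_{8d}(-q)\gamma_{\alpha}(8d)^{-2\alpha}$ (or the same with $\beta$). Each term has the shape
\begin{equation*}
\sum_{n_1,n_2}\frac{\lambda_f(n_1)\lambda_f(n_2)\chi_{8d}(n_1n_2)\,\chi_{8d}(-q)^{\delta_1+\delta_2}}{n_1^{1/2+\epsilon_1\alpha}n_2^{1/2+\epsilon_2\beta}}\,V\!\left(\frac{n_1n_2}{d^2}\right).
\end{equation*}
In this expression $\chi_{8d}(-q)^{\delta_1+\delta_2}$ equals $1$ when $\epsilon_1=\epsilon_2$ and $\chi_{8d}(q)$ (up to a sign absorbed by $d>0$) otherwise. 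This is exactly where $q'\in\{1,q\}$ enters. The factor $(8d)^{-2\delta_1\alpha-2\delta_2\beta}$ becomes $X^{-2\delta_1\alpha-2\delta_2\beta}$ times a shifted weight $J$, whose Mellin transform produces $\widetilde J(1-2\delta_1\alpha-2\delta_2\beta)$.

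Next we remove the squarefree condition. We write $\mu^2(d)=\sum_{a^2\mid d}\mu(a)$ and split at $a\le Y$ and $a>Y$, where $Y$ is a small power of $\log X$. The large-$a$ range is handled exactly as in Li. We use Hölder and the bound of Lemma \ref{lem.SMNt} for the mean square of Dirichlet polynomials over fundamental discriminants, together with Lemma \ref{lem.6.3} to control the coprimality twist by $a$, with the $\tau(a)^5$ loss absorbed. This contributes $O(X(\log\log X)^{4})$ after optimizing $Y$; the shifts $|\alpha|,|\beta|\le 1/\log X$ change $n^{-it}$-type factors only harmlessly. For $a\le Y$ we apply Poisson summation (Lemma \ref{lem2.3}) in $d$, with modulus $q'n_1n_2$ after absorbing $\chi_{8d}(q)$.

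The $k=0$ term forces $q'n_1n_2=\square$. Via Lemma \ref{lem2.2}, $G_0=\varphi$, so it gives the Euler-product weight $\prod_{p\mid qn_1n_2}(1-\frac1{p+1})$, and summing over $a$ produces the constant $\frac{X}{2\pi^2}$. Shifting the Mellin contour in the $V$-integral past $s=0$ yields $Z_{q'}(\epsilon_1\alpha,\epsilon_2\beta)$. The $k\neq0$ terms are split at the parameter $U=X/(\log X)^A$, as in Soundararajan--Young/Li. The small-$n_1n_2$ part of the $k\ne0$ sum contributes, via a Mellin/contour analysis of the double Dirichlet series in $n_1,n_2,k$, a secondary main term with a pole at $\epsilon_1\alpha+\epsilon_2\beta=0$. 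That pole is the source of $U^{-(\epsilon_1\alpha+\epsilon_2\beta)}W_{q',\epsilon_2\beta}(-\epsilon_1\alpha,-\epsilon_2\beta)$, with the factor $L(1,\mathrm{sym}^2f)L(1\pm2\alpha,\mathrm{sym}^2 f)$ arising from the square part of $k$ and $n_1n_2$. The remaining $k\neq0$ contribution is bounded again by Lemma \ref{lem.SMNt}.

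The main obstacle is this off-diagonal $k\neq0$ analysis. The difficulty is showing that the contour shift yields exactly the stated $W$-term, and that the apparent $1/(\epsilon_1\alpha+\epsilon_2\beta)$ singularities cancel between $Z$ and $U^{-(\cdot)}W$ so that the total is holomorphic in the shifts. We would first do the computation with generic shifts, where all poles are simple. We would then check that $R(\epsilon_1\alpha,\epsilon_2\beta)$ is regular on $|\alpha|,|\beta|\le1/\log X$, which makes the error uniform.
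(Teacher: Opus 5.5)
There is a genuine gap, and it sits exactly at the step you flagged as the main obstacle. You start from the symmetric approximate functional equation for the product, with weight $V(n_1n_2/d^2)$, so $n_1n_2$ runs up to about $X^2$, and then send everything through Poisson summation. That is the Soundararajan--Young route, and unconditionally its off-diagonal cannot be made small with the available tools. Bounding the $k\neq0$ terms ``again by Lemma \ref{lem.SMNt}'' is precisely Lemma \ref{lem.TN1N2}. A dyadic block costs $\ll a^{2+\varepsilon}\sqrt{N_1N_2}/X$, hence $\ll a^{\varepsilon}\sqrt{N_1N_2}$ after the Poisson prefactor $X/a^2$, hence $\ll Y^{1+\varepsilon}\sqrt{N_1N_2}$ after summing over $a\le Y$. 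The $\asymp\log X$ blocks with $N_1N_2\asymp X^2$ then give a bound of size $Y^{1+\varepsilon}X\log X$, which exceeds the main term. Moreover, $Y\to\infty$ is forced by the $a>Y$ tail. Splitting the $k\neq0$ terms at $U$ \emph{after} Poisson does not rescue this, since in the symmetric form $n_2\le U$ still allows $n_1$ up to $X^2/n_2$.

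The secondary off-diagonal main term you expect also does not exist. The generating series is $L(\tfrac12+\rho_1,f\otimes\chi_m)L(\tfrac12+\rho_2,f\otimes\chi_m)Y(\rho_1,\rho_2,\rho_3;k_1)$. Twists of a cusp form are entire, and $\zeta(1+\rho_1+\rho_2)$ sits in the \emph{denominator} of $Y$. So there is no pole at $\rho_1+\rho_2=0$.

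The missing idea is Li's decomposition \emph{before} Poisson. Write $L_\gamma=L(\tfrac12+\gamma,f\otimes\chi_{8d})$ and use the approximate functional equation for each $L$-value separately, each of length $\asymp d$. Let $\mathcal{A}_\gamma=\mathcal{A}(\gamma,8d)$ be the same expression with $W_{\pm\gamma}(n/U)$ in place of $W_{\pm\gamma}(n/8d)$, and put $\mathcal{B}_\gamma=L_\gamma-\mathcal{A}_\gamma$. Then
$L_\alpha L_\beta=\mathcal{A}_\alpha L_\beta-\mathcal{A}_\alpha\mathcal{A}_\beta+\mathcal{A}_\beta L_\alpha+\mathcal{B}_\alpha\mathcal{B}_\beta$.

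The sum of $\mathcal{B}_\alpha\mathcal{B}_\beta$ is never Poissoned. After Mellin inversion, the factor $((8d)^w-U^w)/w$ localizes each variable to $[U,X]$ up to decaying tails. Cauchy--Schwarz with Lemma \ref{lem.SMNt} gives $\ll X$ per block. There are $\log(X/U)\asymp\log\log X$ blocks per variable, and $((8d)^w-U^w)/w\ll\log(X/U)$ on $\Re w=0$; together these produce $X(\log\log X)^4$. That is the true source of the error term. The $a>Y_1$ range only gives $X/Y_1^{1-\varepsilon}$, via Lemma \ref{lem.6.3}.

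In the other three terms, one factor has length $\le U$ and the other $\lesssim X$. The off-diagonal is therefore $\ll Y_1^{1+\varepsilon}\sqrt{UX}\ll X$ for $U=X/(\log X)^{1000}$ and $Y_1=(\log X)^{20}$, and it contributes no main term.

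Both pieces of $R$ come from the diagonal $k=0$. For instance, in $\mathcal{N}(\beta,-\alpha,8d)$, $Z_{q'}$ is the residue at $v=0$, $u=0$. The term $U^{-(\epsilon_1\alpha+\epsilon_2\beta)}W_{q',\epsilon_2\beta}$ is the residue at $u=\alpha-\beta$, the pole of $\zeta(1+u+\beta-\alpha)$ inside $Z_q$, in the Mellin integral coming from the $U$-truncated weight $W_{-\alpha}(n_2/U)$. That is why a power of $U$ appears in the main term at all.
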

The proof of Theorem \ref{thm3.1} is based on Li's technique. The key is to bound sums of the form
\begin{equation*}
S={\sum_{m\asymp X}}^{*}\left|\sum_{n\asymp N}\frac{\lambda_f(n)}{\sqrt{n}}\left(\frac{m}{n}\right)\right|^2,
\end{equation*}
when $N$ is close to $X$. The influential work of Heath-Brown\cite{HB} implies that $S\ll X^{1+\varepsilon}$, Li showed that $S\ll X$.

We now prove the theorem. Referring to the Iwaniec and Kowalski's book\cite[Theorem 5.3]{I}, the $L$-function has the following approximate functional equation at the critical value.
For any normalized primitive form $f$ with odd level $q$ and weight $2$, we have
\begin{equation*}
\begin{aligned}
L\left(\tfrac{1}{2}+\gamma, f\otimes\chi_{8d}\right)&=\sum_{n=1}^{+\infty}\frac{\lambda_f(n)\chi_{8d}(n)}{n^{\frac{1}{2}+\gamma}}W_{\gamma}\left(\frac{n}{8|d|}\right)\\
&\quad+(-\eta)\chi_{8d}(-q)\left(\frac{8\sqrt{q}|d|}{2\pi}\right)^{-2\gamma}\frac{\Gamma(1-\gamma)}{\Gamma(1+\gamma)}
\sum_{n=1}^{+\infty}\frac{\lambda_f(n)\chi_{8d}(n)}{n^{\frac{1}{2}-\gamma}}W_{-\gamma}\left(\frac{n}{8|d|}\right).
\end{aligned}
\end{equation*}
where
\begin{equation}\label{eq.W,g}
W_{\gamma}(x):=\frac{1}{2\pi i}\int_{(1)}\frac{g_{\gamma}(s)}{s}x^{-s}ds,\quad
g_{\gamma}(s):=\left(\frac{2\pi}{\sqrt{q}}\right)^{-s}\frac{\Gamma(1+\gamma+s)}{\Gamma(1+\gamma)}.
\end{equation}

With hindsight, we set $U=\frac{X}{(\log X)^{1000}}$ and $Y_1=Y_2=(\log X)^{20}$. Further let
\begin{equation*}
\begin{aligned}
\mathcal{A}(\gamma, 8d)&=\sum_{n}\frac{\lambda_f(n)\chi_{8d}(n)}{n^{\frac{1}{2}+\gamma}}W_{\gamma}\left(\frac{n}{U}\right)\\
&\quad+(-\eta)\chi_{8d}(-q)\left(\frac{8\sqrt{q}|d|}{2\pi}\right)^{-2\gamma}\frac{\Gamma(1-\gamma)}{\Gamma(1+\gamma)}
\sum_{n}\frac{\lambda_f(n)\chi_{8d}(n)}{n^{\frac{1}{2}-\gamma}}W_{-\gamma}\left(\frac{n}{U}\right),\\
\mathcal{B}(\gamma, 8d)&=L(1/2+\gamma, f\otimes\chi_{8d})-\mathcal{A}(\gamma, 8d).
\end{aligned}
\end{equation*}
Then we rephrase the goal as
\begin{equation*}
\begin{aligned}
&\quad{\sum_{(d, 2q)=1}}^*L\left(\tfrac{1}{2}+\alpha, f\otimes\chi_{8d}\right)L\left(\tfrac{1}{2}+\beta, f\otimes\chi_{8d}\right)J\left(\frac{8d}{X}\right)\\
&= {\sum_{(d, 2q)=1}}^*\mathcal{A}(\alpha, 8d)L\left(\tfrac{1}{2}+\beta, f\otimes\chi_{8d}\right)J\left(\frac{8d}{X}\right)-{\sum_{(d, 2q)=1}}^*\mathcal{A}(\alpha, 8d)\mathcal{A}(\beta, 8d)J\left(\frac{8d}{X}\right)\\
&\quad+{\sum_{(d, 2q)=1}}^*\mathcal{A}(\beta, 8d)L\left(\tfrac{1}{2}+\alpha, f\otimes\chi_{8d}\right)J\left(\frac{8d}{X}\right)+{\sum_{(d, 2q)=1}}^*\mathcal{B}(\alpha, 8d)\mathcal{B}(\beta, 8d)J\left(\frac{8d}{X}\right)\\
&:=\Rmnum{1}_f-\Rmnum{2}_f+\Rmnum{3}_f+{\sum_{(d, 2q)=1}}^*\mathcal{B}(\alpha, 8d)\mathcal{B}(\beta, 8d)J\left(\frac{8d}{X}\right).
\end{aligned}
\end{equation*}

Our main goal in the rest of the section is to prove the following propositions. The argumentation processes of parts $\Rmnum{1}_f$, $\Rmnum{2}_f$, $\Rmnum{3}_f$ are similar. Therefore, in \S \ref{Proof of Proposition 3.3}, we only provide a detailed proof for part $\Rmnum{1}_f$, the proofs for parts $\Rmnum{2}_f$ and $\Rmnum{3}_f$ are omitted. The desired upper bound for part $\sum_{d}\mathcal{B}(\alpha, 8d)\mathcal{B}(\beta, 8d)$ is obtained by employing a partition of unity together with repeated use of Lemma \ref{lem.SMNt}. For details, see \S \ref{Proof of Proposition 3.2}.

\subsection{The error term}
\label{Proof of Proposition 3.2}

\begin{proposition}\label{Pro3.2}
With notation as above, we have
\begin{equation*}
{\sum_{(d, 2q)=1}}^*\mathcal{B}(\alpha, 8d)\mathcal{B}(\beta, 8d)J\left(\frac{8d}{X}\right)\ll X(\log\log X)^4.
\end{equation*}
\end{proposition}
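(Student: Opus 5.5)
By Cauchy--Schwarz and $|J|\ll 1$ on $[1/2,2]$, it suffices to show that
\[
{\sum_{(d,2q)=1}}^{*}|\mathcal{B}(\gamma,8d)|^2 J_0\!\left(\tfrac{8d}{X}\right)\ll X(\log\log X)^4
\]
for $\gamma\in\{\alpha,\beta\}$, where $J_0\ge |J|$ is a fixed smooth bump. So the plan is to control a single $\mathcal{B}$ in mean square.

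\textbf{Step 1 (a Dirichlet series for $\mathcal{B}$).} Comparing the approximate functional equation (cutoff $8|d|$) with the definition of $\mathcal{A}$ (cutoff $U$), I would write
\[
\mathcal{B}(\gamma,8d)=\sum_n \frac{\lambda_f(n)\chi_{8d}(n)}{n^{1/2+\gamma}}\Big(W_\gamma\big(\tfrac{n}{8|d|}\big)-W_\gamma\big(\tfrac{n}{U}\big)\Big)+(\text{dual term}),
\]
where the dual term has the same shape with $-\gamma$ and a bounded root-number factor. Inserting the Mellin integral gives
\[
W_\gamma(n/8d)-W_\gamma(n/U)=\frac{1}{2\pi i}\int_{(c)}\frac{g_\gamma(s)}{s}\big((8d)^s-U^s\big)n^{-s}\,ds .
\]
The factor $((8d)^s-U^s)/s$ has no pole at $s=0$, so I can move the line to $\Re s=0$, i.e.\ $s=it$. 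There
\[
\Big|\frac{(8d)^{it}-U^{it}}{it}\Big|\ll \min\big(\log(X/U),\,1/|t|\big)\ll\min(\log\log X,\,1/|t|),
\]
since $X/U=(\log X)^{1000}$, and $g_\gamma(it)$ decays exponentially in $|t|$ by Stirling.

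\textbf{Step 2 (dyadic decomposition in $n$).} Using a smooth partition of unity $\sum_N G(n/N)$, each piece $n\asymp N$ is handled as follows.
\begin{itemize}
\item For $N\gg X^{1+\varepsilon}$, shift to $\Re s$ large; the contribution is negligible.
\item For $N\ll U/(\log X)^{B}$, shift to $\Re s=-c$ and pick up the residue at $s=0$, which is $W$-difference $\approx 0$ up to tiny terms. The remaining piece has size $(N/U)^{c}$, which gives geometric decay.
\item The main range is $U/(\log X)^{B}\le N\le X^{1+\varepsilon}$, about $O(\log\log X)$ dyadic blocks.
\end{itemize}
In the main range, Cauchy--Schwarz over the $t$-integral (weighted by $|g_\gamma(it)|\min(\log\log X,1/|t|)$) and over the $O(\log\log X)$ blocks gives
\[
|\mathcal{B}|^2\ll(\log\log X)\sum_N\int|g_\gamma(it)|\,\min\big(\log\log X,|t|^{-1}\big)\,\Big|\sum_n\frac{\lambda_f(n)\chi_{8d}(n)}{n^{1/2+it}}G\big(\tfrac{n}{N}\big)\Big|^2dt\times(\ldots).
\]
Summing over $d$ with Lemma \ref{lem.SMNt} (for $m=8d$, squarefree odd, so fundamental discriminants up to harmless conditions), each block contributes
\[
\ll(1+|t|)^2\big(X+N\log(2+N/X)\big)\ll(1+|t|)^2X\log\log X
\]
in the main range, once $N\le X(\log X)^{O(1)}$. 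For $N$ near $X^{1+\varepsilon}$, I would first shift $\Re s$ slightly positive to gain $(X/N)^{\sigma}$. The total count of factors is $\log\log X$ from the number of blocks, $(\log\log X)^2$ from the squared Mellin weight, and at most one more from the logarithm in Lemma \ref{lem.SMNt}, giving $(\log\log X)^4$.

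\textbf{Main obstacle.} The hard part is the bookkeeping in the main range. I must arrange the contour shifts so that only $O(\log\log X)$ blocks have length at least $U/(\log X)^{O(1)}$ and at most $X(\log X)^{O(1)}$, and so that the $1/|t|$ and $\log(2+N/M)$ factors cost only powers of $\log\log X$ rather than $\log X$. I would handle the tails by shifting $\Re s=\pm 1/\log\log X$-type lines chosen per block, so the $(N/U)^{\mp\sigma}$ factors sum geometrically. The lemma's $(1+|t|)^2$ factor is absorbed by the exponential decay of $g_\gamma$.
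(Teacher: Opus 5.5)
Your overall architecture is the same as the paper's. You write the $W$-difference as a Mellin integral with entire kernel $((8d)^s-U^s)/s$ and cut $n$ into dyadic blocks. You shift left for $N\le U$, sit on $\Re s=0$ (where the kernel is $\ll\log(X/U)$) in between, shift right for large $N$, and finish with Cauchy--Schwarz and Lemma \ref{lem.SMNt}. Applying Cauchy--Schwarz in $d$ at the outset, rather than to each pair of blocks as in the paper's nine cases, is equivalent: by Minkowski both give $(\sum_N\|B_N\|_2)^2$.

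\textbf{The gap: the blocks with $N>X$.} Three things go wrong there.
\begin{itemize}
\item Your ``main range'' $U/(\log X)^B\le N\le X^{1+\varepsilon}$ contains $\asymp\varepsilon\log X$ dyadic blocks, not $O(\log\log X)$.
\item Even truncated at $X(\log X)^{O(1)}$, the per-block bound fails. For any $N>X$, Lemma \ref{lem.SMNt} gives $(1+|t|)^2N\log(2+N/X)$, which is not $\ll(1+|t|)^2X\log\log X$.
\item Your remedy, a ``slightly positive'' shift to $\Re s=\sigma$ (e.g.\ $\sigma\asymp1/\log\log X$), does not work. On that line you gain $(X/N)^{2\sigma}$ in the mean square, and this beats the linear growth in $N$ only when $\sigma>1/2$. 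For small $\sigma$, the bound for a block at $N=X^{1+\varepsilon}$ is about $X^{1+(1-2\sigma)\varepsilon}$, far above $X$.
\end{itemize}

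\textbf{The fix is the paper's choice of contours.} For every block with $N>X$, not only $N\gg X^{1+\varepsilon}$, move to a fixed line such as $\Re s=4$. There the kernel is $\ll X^4$ and $n^{-s}\asymp N^{-4}$, so these blocks contribute
$\sum_{N>X}(X/N)^4\bigl(N\log(2+N/X)\bigr)^{1/2}\ll X^{1/2}$
to $\|\mathcal{B}\|_2$. The genuinely intermediate range is then $U<N\le X$ (you may extend it down to $U/(\log X)^B$). It has $\asymp\log(X/U)$ blocks, and there the lemma gives $\ll(1+|t|)^2X$ with no logarithm.

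\textbf{Your $\log\log X$ count needs redoing.} Each block has $\|B_N\|_2\ll\log(X/U)\,X^{1/2}$ from the kernel. Summing the $\asymp\log(X/U)$ blocks by Minkowski gives $(\log(X/U))^2X^{1/2}$, and squaring gives $(\log(X/U))^4X$; this is exactly the paper's case (5). Your tally (one $\log\log X$ for the blocks, one from the lemma) reaches the exponent $4$ only through two compensating errors. The blocks cost two factors, and if the lemma really cost a logarithm you would get $(\log\log X)^5$.

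\textbf{Two minor points.}
\begin{itemize}
\item Since the kernel is entire, there is no residue at $s=0$ to pick up.
\item To apply Lemma \ref{lem.SMNt} literally, you must remove the twist $n^{-\gamma}$ and the modified weight $x^{-\sigma}G(x)$. You can do this by Mellin-inverting $G$ and moving to $\Re u=0$, as the paper does with the function $V$.
\end{itemize}
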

\begin{proof}
Let $J_{\gamma}(x)=x^{\gamma}J(x)$, by definition of $\mathcal{B}(\gamma, 8d)$, we have
\begin{equation*}
\begin{aligned}
{\sum_{(d, 2q)=1}}^*\mathcal{B}(\alpha, 8d)\mathcal{B}(\beta, 8d)J\left(\frac{8d}{X}\right)
&=\mathcal{M}(\alpha, \beta, 8d)+(-\eta)\gamma_{\beta} X^{-2\beta}\mathcal{M}(\alpha, -\beta, 8d)\\
&+(-\eta)\gamma_{\alpha} X^{-2\alpha}\mathcal{M}(-\alpha, \beta, 8d)
+\gamma_{\alpha}\gamma_{\beta}X^{-2\alpha-2\beta}\mathcal{M}(-\alpha, -\beta, 8d),
\end{aligned}
\end{equation*}
where $\mathcal{M}(\epsilon_1\alpha, \epsilon_2\beta, 8d)$, $\epsilon_1, \epsilon_2\in\{\pm1\}$ are defined as follows
\begin{equation*}
\begin{aligned}
\mathcal{M}(\epsilon_1\alpha, \epsilon_2\beta, 8d)&:={\sum_{(d, 2q)=1}}^*\chi_{8d}(q')J_{2\gamma}\left(\frac{8d}{X}\right)\sum_{n_1\geq1}\frac{\lambda_f(n_1)\chi_{8d}(n_1)}{n_1^{\frac{1}{2}+\epsilon_1\alpha}}
\left[W_{\epsilon_1\alpha}\left(\frac{n_1}{8d}\right)-W_{\epsilon_1\alpha}\left(\frac{n_1}{U}\right)\right]\\
&\quad\times\sum_{n_2\geq1}\frac{\lambda_f(n_2)\chi_{8d}(n_2)}{n_2^{\frac{1}{2}+\epsilon_2\beta}}\left[W_{\epsilon_2\beta}\left(\frac{n_2}{8d}\right)-W_{\epsilon_2\beta}\left(\frac{n_2}{U}\right)\right],
\end{aligned}
\end{equation*}
here, $q'$ and $\gamma$ are given by
\begin{equation}\label{eq.q'-gamma}
q'=
\begin{cases}
q,\quad \epsilon_1\neq\epsilon_2,\\
1,\quad \epsilon_1=\epsilon_2.
\end{cases}
\gamma=
\begin{cases}
-\beta,\quad & (\epsilon_1, \epsilon_2)=(1, -1),\\
-\alpha,\quad & (\epsilon_1, \epsilon_2)=(-1, 1),\\
-\alpha-\beta,\quad & (\epsilon_1, \epsilon_2)=(-1, -1),\\
0,\quad & (\epsilon_1, \epsilon_2)=(1, 1).
\end{cases}
\end{equation}
There exists a smooth real-valued function $G$ compactly supported on $[3/4, 2]$ which satisfies
\begin{equation}\label{eq.G}
\begin{aligned}
G(x)=1 &\text{ for all $x\in[1, 3/2]$},\\
G(x)+G(x/2)=1 &\text{ for all $x\in[1,3]$}.
\end{aligned}
\end{equation}
For more detailed information, readers can refer to Warner's book\cite[Theorem 1.11 and Corollary]{Warner}.
Let $H$ be a positive integer. The function $F(x)$ defined by
\begin{equation}\label{eq.F}
F(x)=G(x)+G(x/2)+\ldots+G\left(x/2^H\right)
\end{equation}
satisfies $F(x)=1$ for all $x\in [1, 3\cdot 2^{H-1}]$ and support on $[3/4, 2^{H+1}]$. In particular, the function
\begin{equation}\label{eq.V}
V(x)=G(2x)+G(x)+G(x/2)
\end{equation}
satisfies $V(x)=1$ for all $x\in[1/2, 3]$. We have the smooth partition of unity
\begin{equation}
\sum_{H=0}^{\infty} G\left(\frac{x}{2^H}\right)=1
\end{equation}
for all $x\in [1,\infty)$.

A dyadic partition of unity using the functions $G$ and $V$ and applying the Mellin inversion for $G(n_1/N_1)$ and $G(n_2/N_2)$, we obtain
\begin{equation*}
\begin{aligned}
&\quad\mathcal{M}(\epsilon_1\alpha, \epsilon_2\beta, 8d)\\
&={\sum_{N_1, N_2}}^d\frac{1}{(2\pi i)^4}
\int_{(c_1)}\int_{(c_2)}\int_{(\varepsilon)}\int_{(\varepsilon)}g_{\epsilon_1\alpha}(w_1)g_{\epsilon_2\beta}(w_2)\widetilde{G}(u-w_1-\epsilon_1\alpha)\widetilde{G}(v-w_2-\epsilon_2\beta)\\
&\quad\times N_{1}^{u-w_1-\epsilon_1\alpha}N_{2}^{v-w_2-\epsilon_2\beta}
{\sum_{(d, 2q)=1}}^*\chi_{8d}(q')\frac{(8|d|)^{w_1}-U^{w_1}}{w_1}\frac{(8|d|)^{w_2}-U^{w_2}}{w_2}\\
&\quad\times\sum_{n_1\geq1}\frac{\lambda_f(n_1)\chi_{8d}(n_1)}{n_1^{\frac{1}{2}+u}}V\left(\frac{n_1}{N_1}\right)
\sum_{n_2\geq1}\frac{\lambda_f(n_2)\chi_{8d}(n_2)}{n_2^{\frac{1}{2}+v}}V\left(\frac{n_2}{N_2}\right)J_{2\gamma}\left(\frac{8d}{X}\right)dudvdw_1dw_2,
\end{aligned}
\end{equation*}
where ${\sum_{N_1, N_2}}^d$ denotes the sum over $N_1=2^{e_1}$, $N_2=2^{e_2}$ for integer $e_1, e_2\geq0$.\\
There are $9$ cases. In each case, we move the contour in both $u$ and $v$ to the line $\Re u, \Re v=0$.\\
(1) When $N_1\leq U$, $N_2\leq U$, we move the contour in $w_1, w_2$ to $\Re w_1=-1+2/\log X$, $\Re w_2=-1+2/\log X$, noting that we pass no poles in the process. In this case, $N_1^{-\Re(\epsilon_1\alpha)}, N_2^{-\Re(\epsilon_2\beta)}\ll 1$.
Then the contribution of $N_1\leq U$, $N_2\leq U$ is
\begin{align*}
&\quad{\sum_{N_1\leq U}}^d\frac{N_1^{1-2/\log X}}{U^{1-2/\log X}}N_1^{-\Re(\epsilon_1\alpha)}{\sum_{N_2\leq U}}^d\frac{N_2^{1-2/\log X}}{U^{1-2/\log X}}N_2^{-\Re(\epsilon_2\beta)}\int_{(-1+2/\log X)}\int_{(-1+2/\log X)}\int_{-\infty}^{+\infty}\int_{-\infty}^{+\infty}\\
&\quad\times{\sum_{(d, 2q)=1}}^*\left|\sum_{n_1\geq1}\frac{\lambda_f(n_1)\chi_{8d}(n_1)}{n_1^{\frac{1}{2}+it_1}}V\left(\frac{n_1}{N_1}\right)\right|
\left|\sum_{n_2\geq1}\frac{\lambda_f(n_2)\chi_{8d}(n_2)}{n_2^{\frac{1}{2}+it_2}}V\left(\frac{n_2}{N_2}\right)\right|J_{2\gamma}\left(\frac{8d}{X}\right)\\
&\quad\times\frac{dt_1}{(1+|w_1+\epsilon_1\alpha|)^{20}}\frac{dt_2}{(1+|w_2+\epsilon_2\beta|)^{20}}\frac{dw_1}{(1+|it_1-w_1-\epsilon_1\alpha|)^{10}}\frac{dw_2}{(1+|it_2-w_2-\epsilon_2\beta|)^{10}}\\
&\ll{\sum_{N_1\leq U}}^d\frac{N_1}{U}{\sum_{N_2\leq U}}^d
\frac{N_2}{U}\int_{(-1+2/\log X)}\int_{(-1+2/\log X)}\int_{-\infty}^{+\infty}\int_{-\infty}^{+\infty}\\
&\quad\times{\sum_{(d, 2q)=1}}^*\left|\sum_{n_1\geq1}\frac{\lambda_f(n_1)\chi_{8d}(n_1)}{n_1^{\frac{1}{2}+it_1}}V\left(\frac{n_1}{N_1}\right)\right|
\left|\sum_{n_2\geq1}\frac{\lambda_f(n_2)\chi_{8d}(n_2)}{n_2^{\frac{1}{2}+it_2}}V\left(\frac{n_2}{N_2}\right)\right|J_{2\gamma}\left(\frac{8d}{X}\right)\\
&\quad\times\frac{dt_1}{(1+|w_1+\epsilon_1\alpha|)^{10}}\frac{dt_2}{(1+|w_2+\epsilon_2\beta|)^{10}}\frac{dw_1}{(1+|t_1|)^{10}}\frac{dw_2}{(1+|t_2|)^{10}}.
\end{align*}
Note that $8d$ is a fundamental discriminant where $(d, 2)=1$ and $d$ is squarefree. Moreover, $V$ is as in \eqref{eq.V}, so we can apply Lemma \ref{lem.SMNt} to conclude that the above expression is bounded by
\begin{equation*}
\begin{aligned}
&{\sum_{N_1\leq U}}^d\frac{N_1}{U}\left[X+N_1\log\left(2+\frac{N_1}{X}\right)\right]^{\frac{1}{2}}
{\sum_{N_2\leq U}}^d\frac{N_2}{U}\left[X+N_2\log\left(2+\frac{N_2}{X}\right)\right]^{\frac{1}{2}}\\
&\ll X.
\end{aligned}
\end{equation*}
(2) When $N_1\leq U$, $U<N_2\leq X$, we move the contour in $w_1, w_2$ to $\Re w_1=-1+2/\log X$, $\Re w_2=0$. In the range, we note that for $\Re w_2=0$, $\frac{(8|d|)^{w_2}-U^{w_2}}{w_2}\ll\log(\frac{8|d|}{U})\ll\log\frac{X}{U}$. Moreover, ${\sum_{U<N_2\leq X}}^d 1\ll\log\frac{X}{U}$. Thus, similar to the above, this range contributes
\begin{equation*}
\begin{aligned}
&\quad\log\frac{X}{U}{\sum_{N_1\leq U}}^d\frac{N_1^{1-2/\log X}}{U^{1-2/\log X}}N_1^{-\Re(\epsilon_1\alpha)}{\sum_{U<N_2\leq X}}^dN_2^{-\Re(\epsilon_2\beta)}
\int_{(0)}\int_{(-1+2/\log X)}\int_{-\infty}^{+\infty}\int_{-\infty}^{+\infty}\\
&\quad\times{\sum_{(d, 2q)=1}}^*\left|\sum_{n_1\geq1}\frac{\lambda_f(n_1)\chi_{8d}(n_1)}{n_1^{\frac{1}{2}+it_1}}V\left(\frac{n_1}{N_1}\right)\right|
\left|\sum_{n_2\geq1}\frac{\lambda_f(n_2)\chi_{8d}(n_2)}{n_2^{\frac{1}{2}+it_2}}V\left(\frac{n_2}{N_2}\right)\right|J_{2\gamma}\left(\frac{8d}{X}\right)\\
&\quad\times\frac{dt_1}{(1+|t_1|)^{10}}\frac{dt_2}{(1+|t_2|)^{10}}\frac{dw_1}{(1+|w_1+\epsilon_1\alpha|)^{10}}\frac{dw_2}{(1+|w_2+\epsilon_2\beta|)^{10}}\\
&\ll X\log\frac{X}{U}{\sum_{N_1\leq U}}^d\frac{N_1}{U}{\sum_{U<N_2\leq X}}^d1\\
&\ll X(\log\log X)^2,
\end{aligned}
\end{equation*}
again using Cauchy--Schwarz and Lemma \ref{lem.SMNt}. \\
(3) When $N_1\leq U$, $N_2>X$, we move the contour in $w_1, w_2$ to $\Re w_1=-1+2/\log X$, $\Re w_2=4$. The contribution of those $N_1\leq U, N_2>X$ is
\begin{align*}
&\quad{\sum_{N_1\leq U}}^d\frac{N_1^{1-2/\log X}}{U^{1-2/\log X}}N_1^{-\Re(\epsilon_1\alpha)}
{\sum_{N_2>X}}^d\left(\frac{X}{N_2}\right)^4N_2^{-\Re(\epsilon_2\beta)}
\int_{(4)}\int_{(-1+2/\log X)}\int_{-\infty}^{+\infty}\int_{-\infty}^{+\infty}\\
&\quad\times{\sum_{(d, 2q)=1}}^*\left|\sum_{n_1\geq1}\frac{\lambda_f(n_1)\chi_{8d}(n_1)}{n_1^{\frac{1}{2}+it_1}}V\left(\frac{n_1}{N_1}\right)\right|
\left|\sum_{n_2\geq1}\frac{\lambda_f(n_2)\chi_{8d}(n_2)}{n_2^{\frac{1}{2}+it_2}}V\left(\frac{n_2}{N_2}\right)\right|J_{2\gamma}\left(\frac{8d}{X}\right)\\
&\quad\times\frac{dt_1}{(1+|t_1|)^{10}}\frac{dt_2}{(1+|t_2|)^{10}}\frac{dw_1}{(1+|w_1+\epsilon_1\alpha|)^{10}}\frac{dw_2}{(1+|w_2+\epsilon_2\beta|)^{10}}\\
&\ll X^{\frac{1}{2}}{\sum_{N_1\leq U}}^d\frac{N_1}{U}
{\sum_{N_2>X}}^dN_2^{-\Re(\epsilon_2\beta)}\left(\frac{X}{N_2}\right)^4\left[N_2\log\left(2+\frac{N_2}{X}\right)\right]^{\frac{1}{2}}\\
&\ll X.
\end{align*}
In the last line, we have used that $\frac{X}{N_2}\log(2+N_2/X)\ll1$ for $N_2>X$ and
$${\sum_{N_2>X}}^dN_2^{-\Re(\epsilon_2\beta)}\left(\frac{X}{N_2}\right)^3\ll 1 .$$\\
(4) When $U<N_1\leq X$, $N_2\leq U$, we move the contour in $w_1, w_2$ to $\Re w_1=0$, $\Re w_2=-1+2/\log X$. Similarly to case (2), the contribution is
$\ll X(\log\log X)^2$.\\
(5) When $U<N_1\leq X$, $U<N_2\leq X$, we move the contour in $w_1, w_2$ to $\Re w_1=0$, $\Re w_2=0$. The contribution is
\begin{align*}
&\quad\left(\log\frac{X}{U}\right)^2{\sum_{U<N_1\leq X}}^dN_1^{-\Re(\epsilon_1\alpha)}{\sum_{U<N_2\leq X}}^dN_2^{-\Re(\epsilon_2\beta)}
\int_{(0)}\int_{(0)}\int_{-\infty}^{+\infty}\int_{-\infty}^{+\infty}\\
&\quad\times{\sum_{(d, 2q)=1}}^*\left|\sum_{n_1\geq1}\frac{\lambda_f(n_1)\chi_{8d}(n_1)}{n_1^{\frac{1}{2}+it_1}}V\left(\frac{n_1}{N_1}\right)\right|
\left|\sum_{n_2\geq1}\frac{\lambda_f(n_2)\chi_{8d}(n_2)}{n_2^{\frac{1}{2}+it_2}}V\left(\frac{n_2}{N_2}\right)\right|J_{2\gamma}\left(\frac{8d}{X}\right)\\
&\quad\times\frac{dt_1}{(1+|t_1|)^{10}}\frac{dt_2}{(1+|t_2|)^{10}}\frac{dw_1}{(1+|w_1+\epsilon_1\alpha|)^{10}}\frac{dw_2}{(1+|w_2+\epsilon_2\beta|)^{10}}\\
&\ll X(\log\log X)^4.
\end{align*}
(6) When $U<N_1\leq X$, $X<N_2$, we move the contour in $w_1, w_2$ to $\Re w_1=0$, $\Re w_2=4$. The range $U<N_1\leq X$, $N_2>X$ contributes
\begin{align*}
&\quad\log\frac{X}{U}{\sum_{U<N_1\leq X}}^dN_1^{-\Re(\epsilon_1\alpha)}{\sum_{N_2>X}}^d\left(\frac{X}{N_2}\right)^4N_2^{-\Re(\epsilon_2\beta)}
\int_{(4)}\int_{(0)}\int_{-\infty}^{+\infty}\int_{-\infty}^{+\infty}\\
&\quad\times{\sum_{(d, 2q)=1}}^*\left|\sum_{n_1\geq1}\frac{\lambda_f(n_1)\chi_{8d}(n_1)}{n_1^{\frac{1}{2}+it_1}}V\left(\frac{n_1}{N_1}\right)\right|
\left|\sum_{n_2\geq1}\frac{\lambda_f(n_2)\chi_{8d}(n_2)}{n_2^{\frac{1}{2}+it_2}}V\left(\frac{n_2}{N_2}\right)\right|J_{2\gamma}\left(\frac{8d}{X}\right)\\
&\quad\times\frac{dt_1}{(1+|t_1|)^{10}}\frac{dt_2}{(1+|t_2|)^{10}}\frac{dw_1}{(1+|w_1+\epsilon_1\alpha|)^{10}}\frac{dw_2}{(1+|w_2+\epsilon_2\beta|)^{10}}\\
&\ll X^{\frac{1}{2}}\left(\log\frac{X}{U}\right)^2{\sum_{N_2>X}}^d\left(\frac{X}{N_2}\right)^4N_2^{-\Re(\epsilon_2\beta)}\left[N_2\log\left(2+\frac{N_2}{X}\right)\right]^{\frac{1}{2}}\\
&\ll X(\log\log X)^2.
\end{align*}
(7) When $N_1>X$, $N_2\leq U$, we move the contour in $w_1, w_2$ to $\Re w_1=4$, $\Re w_2=-1+2/\log X$. The case is similar to case (3), so is $\ll X$.\\
(8) When $N_1>X$, $U<N_2\leq X$, we move the contour in $w_1, w_2$ to $\Re w_1=4$, $\Re w_2=0$. The case is similar to case (6), so is $\ll X(\log\log X)^2$.\\
(9) When $N_1>X$, $N_2>X$, we move the contour in $w_1, w_2$ to $\Re w_1=4$, $\Re w_2=4$.
\begin{align*}
&\quad{\sum_{N_1>X}}^d\left(\frac{X}{N_1}\right)^4N_1^{-\Re(\epsilon_1\alpha)}{\sum_{N_2>X}}^d\left(\frac{X}{N_2}\right)^4N_2^{-\Re(\epsilon_2\beta)}
\int_{(4)}\int_{(4)}\int_{-\infty}^{+\infty}\int_{-\infty}^{+\infty}\\
&\quad\times{\sum_{(d, 2q)=1}}^*\left|\sum_{n_1\geq1}\frac{\lambda_f(n_1)\chi_{8d}(n_1)}{n_1^{\frac{1}{2}+it_1}}V\left(\frac{n_1}{N_1}\right)\right|
\left|\sum_{n_2\geq1}\frac{\lambda_f(n_2)\chi_{8d}(n_2)}{n_2^{\frac{1}{2}+it_2}}V\left(\frac{n_2}{N_2}\right)\right|J_{2\gamma}\left(\frac{8d}{X}\right)\\
&\quad\times\frac{dt_1}{(1+|t_1|)^{10}}\frac{dt_2}{(1+|t_2|)^{10}}\frac{dw_1}{(1+|w_1+\epsilon_1\alpha|)^{10}}\frac{dw_2}{(1+|w_2+\epsilon_2\beta|)^{10}}\\
&\ll{\sum_{N_1>X}}^dN_1^{-\Re(\epsilon_1\alpha)}\left(\frac{X}{N_1}\right)^4\left[N_1\log\left(2+\frac{N_1}{X}\right)\right]^{\frac{1}{2}}{\sum_{N_2>X}}^dN_2^{-\Re(\epsilon_2\beta)}
\left(\frac{X}{N_2}\right)^4\left[N_2\log\left(2+\frac{N_2}{X}\right)\right]^{\frac{1}{2}}\\
&\ll X.
\end{align*}
Therefore, since $X^{-2\alpha}, X^{-2\beta}, X^{-2\alpha-2\beta}\ll 1$, Proposition \ref{Pro3.2} holds.
\end{proof}

\subsection{The estimation of $\Rmnum{1}_f$}
\label{Proof of Proposition 3.3}

\begin{proposition}\label{Pro3.3}
With notation as above, we have
\begin{equation*}
\begin{aligned}
&\quad\Rmnum{1}_f-\Rmnum{2}_f+\Rmnum{3}_f\\ %
&=\frac{X}{2\pi^2}\sum_{\epsilon_1, \epsilon_2\in\{\pm1\}}
(-\eta)^{\delta_1+\delta_2}\gamma_{\alpha}^{\delta_1}\gamma_{\beta}^{\delta_2}X^{-2\delta_1\alpha-2\delta_2\beta}\widetilde{J}(1-2\delta_1\alpha-2\delta_2\beta)
R(\epsilon_1\alpha, \epsilon_2\beta)
+O_q(X),
\end{aligned}
\end{equation*}
where $\gamma_\alpha$, $\gamma_\beta$, $\delta_i$($i=1, 2$), $R(\epsilon_1\alpha, \epsilon_2\beta)$ are defined as in Theorem \ref{thm3.1}. 
\end{proposition}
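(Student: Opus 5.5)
The plan follows Li's treatment of the terms involving $\mathcal{A}$, carried out in detail for $\Rmnum{1}_f$; $\Rmnum{3}_f$ is the same with $\alpha\leftrightarrow\beta$, and $\Rmnum{2}_f$ is the simpler case where both factors are short sums. Write $\mathcal{A}(\alpha,8d)$ as its two pieces of length about $U$, the second carrying the factor $(-\eta)\chi_{8d}(-q)\gamma_\alpha(8d)^{-2\alpha}$. For each piece, replace $L(\tfrac12+\beta,f\otimes\chi_{8d})$ by its approximate functional equation from \cite[Theorem 5.3]{I}, a sum of two terms with cutoff $8|d|$. This expresses $\Rmnum{1}_f$ as four double sums over $n_1\ll U^{1+\varepsilon}$ and $n_2\ll X^{1+\varepsilon}$, indexed by $(\epsilon_1,\epsilon_2)$, of the form
$$
{\sum_{(d,2q)=1}}^*\chi_{8d}(q'n_1n_2)\,\frac{\lambda_f(n_1)\lambda_f(n_2)}{n_1^{1/2+\epsilon_1\alpha}n_2^{1/2+\epsilon_2\beta}}\,W_{\epsilon_1\alpha}\Big(\frac{n_1}{U}\Big)W_{\epsilon_2\beta}\Big(\frac{n_2}{8d}\Big)J_{2\gamma}\Big(\frac{8d}{X}\Big).
$$
Here $q'$ and $\gamma$ are as in \eqref{eq.q'-gamma}, and the prefactors are exactly $(-\eta)^{\delta_1+\delta_2}\gamma_\alpha^{\delta_1}\gamma_\beta^{\delta_2}X^{-2\delta_1\alpha-2\delta_2\beta}$.

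Next, remove the squarefree condition in the usual way, writing $\mu^2(d)=\sum_{a^2\mid d}\mu(a)$. Split at a parameter $Y=(\log X)^{20}$, as with $Y_1,Y_2$. The terms with $a>Y$ are bounded by Cauchy--Schwarz together with Lemma~\ref{lem.6.3}, which applies because the coprimality $(n,a)=1$ appears naturally. Since $n_1\le U=X/(\log X)^{1000}$, one factor is short, and this gives $O(X)$ with room to spare. For $a\le Y$, apply the Poisson summation formula, Lemma~\ref{lem2.3}, with modulus $n=q'n_1n_2$ (after separating $d$ coprime to $q$ by characters or Möbius), at scale $Z=X/(8a^2)$. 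The $k=0$ term survives only when $q'n_1n_2=\square$, by Lemma~\ref{lem2.2}. Summing it back over $a$ with Euler factors yields $\frac{X}{2\pi^2}$, the product $\prod_{p\mid qn_1n_2}(1-\frac1{p+1})$, and $\widetilde J(1-2\delta_1\alpha-2\delta_2\beta)$. The remaining $n_1,n_2$ sum is then rewritten as a double Mellin integral involving $Z_{q'}(u+\epsilon_1\alpha,v+\epsilon_2\beta)$, with $U^{u}$ and $X^{v}$ weights. Shifting contours to the left picks up the residue at $u=v=0$, which gives $Z_{q'}(\epsilon_1\alpha,\epsilon_2\beta)$. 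Because the $n_1$ cutoff is $U$ rather than $X$, there is also a residue at the pole of $Z_{q'}$ coming from $L(1+u+v+\epsilon_1\alpha+\epsilon_2\beta,\operatorname{sym}^2 f)$-type factors, namely $\zeta(1+\cdots)$ in the diagonal $n_1=n_2$ contribution. That residue is exactly the term $U^{-(\epsilon_1\alpha+\epsilon_2\beta)}W_{q',\epsilon_2\beta}(-\epsilon_1\alpha,-\epsilon_2\beta)$, featuring $g_{\epsilon_2\beta}$ and $L(1,\operatorname{sym}^2f)L(1\pm2\alpha,\operatorname{sym}^2f)Z^*_{q'}$. The remaining integrals on shifted lines contribute $O(X)$ because $|\alpha|,|\beta|\le1/\log X$.

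The main obstacle is the $k\ne0$ terms after Poisson. Following Li, I would open $G_k$ multiplicatively using Lemma~\ref{lem2.2} and write the dual sum as a sum over $k$ of $\big(\frac{k}{n_2}\big)$-twisted sums. Summing over $n_2$ first, via Mellin inversion, converts the $k\neq0$ contribution into integrals of $L(\tfrac12+\cdots,f\otimes\chi_{k})$-type Dirichlet series, up to Euler factors at $p\mid n_1 q$. The contribution from $k=\square$ produces a secondary main term that combines with the $k=0$ term; in Li's argument this combination is what makes the $U$-dependent pieces cancel across $\Rmnum{1}_f-\Rmnum{2}_f+\Rmnum{3}_f$, leaving only $R$. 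The non-square $k$ are handled by a dyadic decomposition in $k$ and $n_2$ with the partition $G$ from \eqref{eq.G}, followed by Cauchy--Schwarz and Lemma~\ref{lem.SMNt} with $m\approx k$ a fundamental discriminant and $N\approx X$. The shortness $n_1\le U$ saves a power of $\log X$, which is enough to give $O_q(X)$. I expect the bookkeeping of the square-$k$ secondary term, and verifying that it exactly produces the $W$-part of $R$, to be the most delicate step.
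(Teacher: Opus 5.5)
Your outline matches the paper up through the $k=0$ term: the four $\mathcal{N}$-sums with their prefactors, $\mu^2(d)=\sum_{a^2\mid d}\mu(a)$ split at $Y_1$, Lemma~\ref{lem.6.3} for $a>Y_1$, Poisson summation for $a\le Y_1$, and the diagonal evaluated by the residues at $u=0$ and at the pole of $\zeta(1+u+v+\epsilon_1\alpha+\epsilon_2\beta)$. The genuine error is in the off-diagonal. You claim that square $k$ yield a secondary main term that ``combines with the $k=0$ term'', cancels the $U$-dependence, and ``exactly produces the $W$-part of $R$''. For a cusp form no such term exists. Write $2^\delta k=k_1k_2^2$ as the paper does. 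Square $k$ then correspond to $k_1=1$, where the dual series is $L(\tfrac12+\rho_1,f)L(\tfrac12+\rho_2,f)Y(\rho_1,\rho_2,\rho_3;1)$. This is entire, so there is no counterpart of the pole of $\zeta$ that creates the secondary term in the setting of quadratic Dirichlet $L$-functions. The paper therefore bounds every squarefree $k_1$, including $k_1=1$, by the same argument (Lemma~\ref{lem.TN1N2} via Lemma~\ref{lem.SMNt}). This gives $T_l\ll qa^{2+\varepsilon}2^{-l}X^{-1}(UX)^{1/2}$ and an off-diagonal total $\ll_q Y_1^{1+\varepsilon}(UX)^{1/2}\ll X(\log X)^{-400}$. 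The step you call most delicate is thus a search for a term that is not there. You had also already extracted the $U^{-(\epsilon_1\alpha+\epsilon_2\beta)}$-term from the $k=0$ residue, so adding a $W$-term from square $k$ would count it twice.

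Assembling $\Rmnum{1}_f-\Rmnum{2}_f+\Rmnum{3}_f$ is pure diagonal bookkeeping, and nothing needs to cancel, since $R$ itself keeps a $U$-dependent term. What you do need is the following.

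\emph{Which residue is which.} The $\zeta$-pole residue of $\Rmnum{1}_f$ carries $g_{\epsilon_1\alpha}$ of the $U$-truncated variable and $L(1\pm2\epsilon_2\beta,\operatorname{sym}^2 f)Z^*_{q'}(\tfrac12+\epsilon_2\beta,\tfrac12-\epsilon_2\beta)$; compare $g_{-\alpha}(\alpha-\beta)/(\alpha-\beta)$ and $L(1\pm2\beta,\operatorname{sym}^2 f)$ in the paper's diagonal lemma. The $W$ in $R$, with $g_{\epsilon_2\beta}$ and $L(1\pm2\alpha,\operatorname{sym}^2 f)$, is instead the residue from $\Rmnum{3}_f$. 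These differ by only $O(X)$. Both are $\frac{X}{2\pi^2}U^{-s}(-s)^{-1}$ times an analytic factor, with $s=\epsilon_1\alpha+\epsilon_2\beta$. The two factors coincide on $s=0$, because $g(0)=1$ and $Z^*_{q'}$ is symmetric.

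\emph{The diagonal of $\Rmnum{2}_f$.} It is not simpler than the others. With both sums cut at $U$, the residue along the polar line $u+v=-s$ no longer decays. In $\Rmnum{1}_f$ it is $\ll X/\log X$ only because of the factor $(U/X)^{\Re u}$. It is still $O(X)$ uniformly in $\alpha,\beta$.

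With these observations, $\Rmnum{1}_f$, $\Rmnum{2}_f$ and $\Rmnum{3}_f$ each equal the stated main term up to $O_q(X)$, which gives the proposition.
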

The estimation method of $\Rmnum{1}_f$, $\Rmnum{2}_f$, $\Rmnum{3}_f$ are similar, and the following only describes the case of $\Rmnum{1}_f$ in detail.\par
Let $J_{\gamma}(x)=x^{\gamma}J(x)$, by definition of $\mathcal{A}(\alpha, 8d)$, we have
\begin{equation*}
\begin{aligned}
&\quad{\sum_{(d, 2q)=1}}^*\mathcal{A}(\alpha, 8d)L\left(\tfrac{1}{2}+\beta, f\otimes\chi_{8d}\right)J\left(\frac{8d}{X}\right)\\
&=\mathcal{N}(\beta, \alpha, 8d)+(-\eta)\gamma_{\beta} X^{-2\beta}\mathcal{N}(-\beta, \alpha, 8d)
+(-\eta)\gamma_{\alpha} X^{-2\alpha}\mathcal{N}(\beta, -\alpha, 8d)\\
&\quad+\gamma_{\alpha}\gamma_{\beta}X^{-2\alpha-2\beta}\mathcal{N}(-\beta, -\alpha, 8d),
\end{aligned}
\end{equation*}
and $\mathcal{N}(\epsilon_2\beta, \epsilon_1\alpha, 8d)$, $\epsilon_1, \epsilon_2\in\{\pm1\}$ is defined as follows
\begin{equation*}
\mathcal{N}(\epsilon_2\beta, \epsilon_1\alpha, 8d)={\sum_{(d, 2q)=1}}^*J_{2\gamma}\left(\frac{8d}{X}\right)\mathop{\sum\sum}_{n_1, n_2\geq1}\frac{\lambda_f(n_1)\lambda_f(n_2)}{n_1^{\frac{1}{2}+\epsilon_2\beta}n_2^{\frac{1}{2}+\epsilon_1\alpha}}\chi_{8d}(q'n_1n_2)W_{\epsilon_2\beta}\left(\frac{n_1}{8d}\right)W_{\epsilon_1\alpha}\left(\frac{n_2}{U}\right),
\end{equation*}
where $q'$ and $\gamma$ are shown as formula \eqref{eq.q'-gamma}.
When $\epsilon_1\neq\epsilon_2$, we have that
\begin{equation*}
\begin{aligned}
&\quad\mathcal{N}(\pm\beta, \mp\alpha, 8d)\\
&=\sum_{(d, 2q)=1}\sum_{a^2\mid d}\mu(a)J_{2\gamma}\left(\frac{8d}{X}\right)\mathop{\sum\sum}_{n_1, n_2\geq1}
\frac{\lambda_f(n_1)\lambda_f(n_2)}{n_1^{\frac{1}{2}\pm\beta}n_2^{\frac{1}{2}\mp\alpha}}\chi_{8d}(qn_1n_2)W_{\pm\beta}\left(\frac{n_1}{8d}\right)W_{\mp\alpha}\left(\frac{n_2}{U}\right).
\end{aligned}
\end{equation*}
Exchanging the order of summation, we see that
\begin{equation*}
\begin{aligned}
&\quad\mathcal{N}(\pm\beta, \mp\alpha, 8d)\\
&=\sum_{(a, 2q)=1}\mu(a)\mathop{\sum\sum}_{(n_1n_2, a)=1}
\frac{\lambda_f(n_1)\lambda_f(n_2)}{n_1^{\frac{1}{2}\pm\beta}n_2^{\frac{1}{2}\mp\alpha}}W_{\mp\alpha}\left(\frac{n_2}{U}\right)\sum_{(d, 2)=1}\chi_{8d}(qn_1n_2)J_{2\gamma}\left(\frac{8a^2d}{X}\right)W_{\pm\beta}\left(\frac{n_1}{8a^2d}\right)\\
&=\left(\sum_{\substack{(a, 2q)=1\\a\leq Y_1}}+\sum_{\substack{(a, 2q)=1\\a>Y_1}}\right)\mu(a)
\mathop{\sum\sum}_{(n_1n_2, a)=1}\frac{\lambda_f(n_1)\lambda_f(n_2)}{n_1^{\frac{1}{2}\pm\beta}n_2^{\frac{1}{2}\mp\alpha}}W_{\mp\alpha}\left(\frac{n_2}{U}\right)\\
&\quad\times\sum_{(d, 2)=1}\chi_{8d}(qn_1n_2)J_{2\gamma}\left(\frac{8a^2d}{X}\right)W_{\pm\beta}\left(\frac{n_1}{8a^2d}\right).
\end{aligned}
\end{equation*}
Similarly, when $\epsilon_1=\epsilon_2$, we have that
\begin{align*}
&\quad\mathcal{N}(\pm\beta, \pm\alpha, 8d)\\
&=\sum_{(a, 2q)=1}\mu(a)\sum_{b\mid q}\mu(b)\\
&\quad\times\mathop{\sum\sum}_{(n_1n_2, a)=1}\frac{\lambda_f(n_1)\lambda_f(n_2)}{n_1^{\frac{1}{2}\pm\beta}n_2^{\frac{1}{2}\pm\alpha}}W_{\pm\alpha}\left(\frac{n_2}{U}\right)\sum_{(d, 2)=1}\chi_{8bd}(n_1n_2)J_{2\gamma}\left(\frac{8a^2bd}{X}\right)W_{\pm\beta}\left(\frac{n_1}{8a^2bd}\right)\\
&=\left(\sum_{\substack{(a, 2q)=1\\a\leq Y_1}}+\sum_{\substack{(a, 2q)=1\\a>Y_1}}\right)\mu(a)\left(\sum_{\substack{b\mid q\\ b\leq Y_2}}+\sum_{\substack{b\mid q\\ b>Y_2}}\right)\mu(b)\\
&\quad\times\mathop{\sum\sum}_{(n_1n_2, a)=1}\frac{\lambda_f(n_1)\lambda_f(n_2)}{n_1^{\frac{1}{2}\pm\beta}n_2^{\frac{1}{2}\pm\alpha}}W_{\pm\alpha}\left(\frac{n_2}{U}\right)
\sum_{(d, 2)=1}\chi_{8bd}(n_1n_2)J_{2\gamma}\left(\frac{8a^2bd}{X}\right)W_{\pm\beta}\left(\frac{n_1}{8a^2bd}\right).
\end{align*}

We shall illustrate $\mathcal{N}(\beta, -\alpha, 8d)$ in detail. Based on the above operations, we proceed by considering two cases: $a > Y_1$ and $a \leq Y_1$. The contribution for $a > Y_1$ is negligible, see \S \ref{subsubsection: The contribution of a>Y_1} for details. By the Poisson summation formula, the case $a\leq Y_1$ can be decomposed into the diagonal and off-diagonal contributions. The diagonal contribution arising from the terms when $qn_1n_2$ is perfect square dominates, generically $G_k(qn_1n_2)=0$ when $qn_1n_2$ is not squarefree, so that the same diagonal contribution simply does not exist. The detailed proof is provided in \S \ref{subsubsection: The contribution of diagonal, square term}. The analysis of the off-diagonal terms is more involved, and we defer the details to \S \ref{subsubsection: The contribution of off-diagonal term}.

The case of $\mathcal{N}(\beta, \alpha, 8d)$, $\mathcal{N}(-\beta, \alpha, 8d)$ and $\mathcal{N}(-\beta, -\alpha, 8d)$ can be handled similarly. We will not elaborate further.
\subsubsection{The contribution of $a>Y_1$}
\label{subsubsection: The contribution of a>Y_1}

\begin{lemma}\label{lem.T}
With notation as above, for any $\varepsilon>0$, we have the following bounds: if $\epsilon_1=\epsilon_2$,
\begin{equation*}
\mathcal{N}(\epsilon_2\beta, \epsilon_1\alpha, 8d)\ll_q
\begin{cases}
\frac{X}{Y_2},\;a\leq Y_1, b>Y_2,\\
\frac{X}{Y_1^{1-\varepsilon}},\;a> Y_1, b\leq Y_2,\\
\frac{X}{Y_1^{1-\varepsilon}Y_2},\;a> Y_1, b>Y_2;
\end{cases}
\end{equation*}
if $\epsilon_1\neq\epsilon_2$, $\mathcal{N}(\epsilon_2\beta, \epsilon_1\alpha, 8d)(a>Y_1)\ll_q\frac{X}{Y_1^{1-\varepsilon}}$.
\end{lemma}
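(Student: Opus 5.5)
The plan is to follow Li's treatment of the large-$a$ tail in \cite{Li}, Lemma 6.3. Write $d=a^2 b\,d'$, or $d=a^2d'$ when $\epsilon_1\neq\epsilon_2$. The piece of $\mathcal{N}(\epsilon_2\beta,\epsilon_1\alpha,8d)$ with $a>Y_1$ equals
\[
\sum_{\substack{(a,2q)=1\\ a>Y_1}}\mu(a)\sum_{b}\mu(b)\sum_{(d,2)=1}J_{2\gamma}\Big(\frac{8a^2bd}{X}\Big)\,\mathcal{S}_1(a,bd)\,\mathcal{S}_2(a,bd),
\]
where
\[
\mathcal{S}_1=\sum_{(n_1,a)=1}\frac{\lambda_f(n_1)\chi_{8bd}(n_1)}{n_1^{1/2+\epsilon_2\beta}}W_{\epsilon_2\beta}\Big(\frac{n_1}{8a^2bd}\Big),
\qquad
\mathcal{S}_2=\sum_{(n_2,a)=1}\frac{\lambda_f(n_2)\chi_{8bd}(n_2)}{n_2^{1/2+\epsilon_1\alpha}}W_{\epsilon_1\alpha}\Big(\frac{n_2}{U}\Big).
\]
For $b=1$ or $q'=q$ the character is simply $\chi_{8d}$ times a fixed character. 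The $d$-sum runs over $d\asymp X/(a^2b)$.

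First, I will write each $W$ through its Mellin integral. This turns $\mathcal{S}_1$ and $\mathcal{S}_2$ into integrals over vertical lines of sums of the shape $\sum_{(n,a)=1}\lambda_f(n)\chi_{8bd}(n)n^{-1/2-it}G(n/N)$ over dyadic blocks $N$, using the partition of unity built from $G$ in \eqref{eq.G}. I will then apply Cauchy--Schwarz in $d$ to each dyadic pair $(N_1,N_2)$.

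Next, I will bound each mean square by Lemma \ref{lem.6.3} with $\mathcal{X}=X/(a^2b)$. This gives $\ll \tau(a)^5 X(a^2b)^{-1}(1+|t|)^3\log(2+|t|)$, uniformly in the block length $N$. The polynomial growth in $t$ is absorbed by the rapid decay of $g_\gamma(w)$ and $\widetilde{G}$ in vertical directions. The number of dyadic blocks is handled exactly as in Proposition \ref{Pro3.2}. For $N$ beyond the natural length ($N_1\gg a^2bd$, $N_2\gg U$), shifting $w$ to the right gives a factor $(\text{length}/N)^{4}$, and the resulting series over $N$ converges. For smaller $N$, shifting $w$ to $\Re w=-1+\varepsilon'$ gives a factor $N/\text{length}$, which is also summable. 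So the dyadic sums cost at most $O(1)$, or at worst $X^{\varepsilon}$-free logarithmic factors, which I would absorb by taking the shifts at distance $1/\log X$ as in Proposition \ref{Pro3.2}.

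This yields, for fixed $a$ and $b$, a bound $\ll_q \tau(a)^5 X/(a^2 b)$. Summing over $a>Y_1$ gives $X\sum_{a>Y_1}\tau(a)^5a^{-2}\ll X Y_1^{-1+\varepsilon}$. For the $b$-sum with $b\mid q$, either $b\le Y_2$ contributes $O_q(1)$ factors, or $b>Y_2$ gives the extra factor $1/b<1/Y_2$. Since $q$ has only finitely many divisors, this produces the three cases listed for $\epsilon_1=\epsilon_2$. For the case $a\le Y_1$, $b>Y_2$, the $a$-sum $\sum_a\tau(a)^5a^{-2}$ converges, giving $X/Y_2$. (In fact, since $b\mid q$ and $Y_2=(\log X)^{20}$, this range is empty once $X$ is large.) The case $\epsilon_1\neq\epsilon_2$ is identical without $b$. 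The factor $\chi_{8d}(q)$ is a fixed character, which Lemma \ref{lem.6.3} tolerates after absorbing it into the coefficients, or by splitting $d$ into residue classes mod $q$.

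The main obstacle is the uniformity needed to feed Lemma \ref{lem.6.3}. The lemma requires the sum over $n$ with $(n,a)=1$ against $(\frac{8d}{n})$, while here the character is $\chi_{8bd}$ and $d$ runs over all odd $d$, not squarefree ones. I will handle this by writing $\chi_{8bd}(n)=\chi_b(n)\chi_{8d}(n)$. Then either I absorb $\chi_b$ into the coefficient $\lambda_f(n)$ (a twist by a character of modulus dividing $q$, where I would need the lemma's proof to work for $\lambda_f(n)\chi_b(n)$, which is plausible since only multiplicativity and Rankin--Selberg bounds are used), or I split $n$ into classes mod $b$. I must also check that the $t$-dependence $(1+|t|)^3$ and the $\tau(a)^5$ loss do not spoil the $Y_1^{1-\varepsilon}$ saving, which the convergence of $\sum\tau(a)^5a^{-2}$ ensures.
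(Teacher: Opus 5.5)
Your proposal follows the paper's proof essentially step for step. Both Mellin-expand the two $W$'s, split $n_1,n_2$ dyadically with $G$, and move contours according to whether a block lies below or above its natural length ($X$ for $n_1$, $U$ for $n_2$). Both then apply Cauchy--Schwarz in $d$ and Lemma~\ref{lem.6.3} with $\mathcal{X}\asymp X/a^2$ to get $\tau(a)^5X/a^2$ for each $a$, and sum over $a>Y_1$.

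Several of your side remarks are sharper than the paper's treatment:
\begin{itemize}
\item The ranges $b>Y_2$ are empty once $(\log X)^{20}>q$.
\item $\chi_{8d}(q)$ depends only on $d$, so it disappears when you take absolute values; no residue classes are needed.
\item Lemma~\ref{lem.6.3} already runs over all odd $d$, so squarefreeness is not an issue.
\item For the twist $\chi_{8bd}=\chi_b\chi_{8d}$ you need neither a twisted form of Lemma~\ref{lem.6.3} nor a splitting of $n$ modulo $b$. Put $d'=bd$. Each mean square is a sum of nonnegative terms, so you may drop the condition $b\mid d'$ and extend to all odd $d'\le X/(4a^2)$. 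This loses only the factor $1/b$, which is $O_q(1)$.
\end{itemize}

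There is, however, a step that fails as you state it, and the paper's write-up has the same omission. For blocks below the natural length you move the $W$-Mellin variable to $\Re w=-1+\varepsilon'$ and claim a factor $N/(\mathrm{length})$. This crosses the simple pole of $g_\gamma(w)/w$ at $w=0$. Its residue is the plain block sum $\sum_{(n,a)=1}\lambda_f(n)\chi_{8bd}(n)n^{-1/2-\gamma}G(n/N)$, which does not decay in $N$. There are $\asymp\log X$ such blocks for $n_1$ (those with $N_1\le X$) and for $n_2$ (those with $N_2\le U$).

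The analogy with Proposition~\ref{Pro3.2} breaks exactly here. There the kernel $((8|d|)^w-U^w)/w$ is entire, whereas $g_\gamma(w)w^{-1}(\cdot)^w$ is not. Placing the contour at distance $1/\log X$ only tames the factors $N^{\pm 2/\log X}$; it does nothing about this residue.

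Bounding each residue block by Lemma~\ref{lem.6.3} and summing over blocks gives $\ll\tau(a)^5(\log X)^2X/a^2$ per $a$. The tail is therefore $\ll X(\log X)^2Y_1^{-1+\varepsilon}$, not $XY_1^{-1+\varepsilon}$. Some logarithmic loss is unavoidable with Cauchy--Schwarz here: the full sums behave like $L$-values, whose mean square over $d\le\mathcal{X}$ is roughly of order $\mathcal{X}\log\mathcal{X}$.

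With $Y_1=(\log X)^{20}$ the weaker bound is $\ll X(\log X)^{-17}$, which is all Proposition~\ref{Pro3.3} uses. You should therefore state and prove the lemma in that form.
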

\begin{proof} We will focus on elaborating the details of $\mathcal{N}(\beta, -\alpha, 8d)(a>Y_1)$. The others can be handled similarly. We begin by recalling the definition of $\mathcal{N}(\beta, -\alpha, 8d)(a>Y_1)$.
\begin{equation*}
\begin{aligned}
&\quad\mathcal{N}(\beta, -\alpha, 8d)(a>Y_1)\\
&=\sum_{\substack{(a, 2q)=1\\a>Y_1}}\mu(a)\sum_{(d, 2)=1}J_{-2\alpha}\left(\frac{8a^2d}{X}\right)\mathop{\sum\sum}_{(n_1n_2, a)=1}\frac{\lambda_f(n_1)\lambda_f(n_2)}{n_1^{\frac{1}{2}+\beta}n_2^{\frac{1}{2}-\alpha}}\chi_{8d}(qn_1n_2)W_{\beta}\left(\frac{n_1}{8a^2d}\right)W_{-\alpha}\left(\frac{n_2}{U}\right).
\end{aligned}
\end{equation*}
A dyadic partition of unity using the functions $G$ and $V$ (see formulas \eqref{eq.G} and \eqref{eq.V} respectively) and applying the Mellin inversion for $G(n_1/N_1)$ and $G(n_2/N_2)$, the second-layer summation is expressed as,
\begin{align*}
&\quad\sum_{(d, 2)=1}\chi_{8d}(q)J_{-2\alpha}\left(\frac{8a^2d}{X}\right)\mathop{\sum\sum}_{(n_1n_2, a)=1}
\frac{\lambda_f(n_1)\lambda_f(n_2)}{n_1^{\frac{1}{2}+\beta}n_2^{\frac{1}{2}-\alpha}}\chi_{8d}(n_1n_2)W_{\beta}\left(\frac{n_1}{8a^2d}\right)W_{-\alpha}\left(\frac{n_2}{U}\right)\\
&=\frac{1}{(2\pi i)^4}{\sum_{N_1, N_2}}^{d}\int_{(\varepsilon)}\int_{(\varepsilon)}\int_{(\varepsilon)}\int_{(\varepsilon)}\frac{g_{-\alpha}(u)g_{\beta}(v)}{uv}U^u
\sum_{(d, 2)=1}\chi_{8d}(q)(8a^2d)^v\\
&\quad\times\sum_{(n_1, a)=1}\frac{\lambda_f(n_1)}{n^{1/2+w_1}}\chi_{8d}(n_1)V\left(\frac{n_1}{N_1}\right)
\sum_{(n_2, a)=1}\frac{\lambda_f(n_2)}{n^{1/2+w_2}}\chi_{8d}(n_2)V\left(\frac{n_2}{N_2}\right)J_{-2\alpha}\left(\frac{8a^2d}{X}\right)\\
&\quad\times N_{1}^{w_1-v-\beta}N_{2}^{w_2-u+\alpha}\widetilde{G}(w_1-v-\beta)\widetilde{G}(w_2-u+\alpha)dudvdw_1dw_2,
\end{align*}
where ${\sum_{N_1, N_2}}^{d}$ denotes sum over $N_1=2^{e_1}$, $N_2=2^{e_2}$ for integers $e_1, e_2\geq 0$. We split this sum into four distinct parts.

(1) When $N_1\leq X$, $N_2\leq U$, we now shift the contour of integration in $u$, $v$ to $\Re u=-1+2/\log X$, $\Re v=-1+2/\log X$. We move the contour in $w_1$, $w_2$ to $\Re w_1=0$, $\Re w_2=0$. This part is
\begin{align*}
&\ll{\sum_{N_1\leq X}}^d\frac{N_1^{1-2/\log X}}{X^{1-2/\log X}}N_1^{-\Re\beta}{\sum_{N_2\leq U}}^d\frac{N_2^{1-2/\log X}}{U^{1-2/\log X}}N_2^{\Re\alpha}
\int_{-\infty}^{+\infty}\int_{-\infty}^{+\infty}\int_{(-1+2/\log X)}\int_{(-1+2/\log X)}\\
&\quad\times\frac{1}{(1+|u-\alpha|)^{20}}\frac{1}{(1+|v+\beta|)^{20}}\frac{1}{(1+|it_1-(v+\beta)|)^{10}}\frac{1}{(1+|it_2-(u-\alpha)|)^{10}}\\
&\quad\times\sum_{(d, 2)=1}\left|\sum_{(n_1, a)=1}\frac{\lambda_f(n_1)}{n^{1/2+it_1}}\chi_{8d}(n_1)V\left(\frac{n_1}{N_1}\right)\right|
\left|\sum_{(n_2, a)=1}\frac{\lambda_f(n_2)}{n^{1/2+it_2}}\chi_{8d}(n_2)V\left(\frac{n_2}{N_2}\right)\right|\\
&\quad\times J_{-2\alpha}\left(\frac{8a^2d}{X}\right)dudvdt_1dt_2.
\end{align*}
In this case, $N_1^{-\Re\beta}, N_2^{\Re\alpha}\ll1$, ${\sum_{N_1\leq X}}^d\frac{N_1}{X}, {\sum_{N_2\leq U}}^d\frac{N_2}{U}\ll 1$. According to Lemma \ref{lem.6.3} and Cauchy--Schwarz inequality, we have
\begin{equation*}
\ll\frac{\tau(a)^5}{a^2}X.
\end{equation*}
(2) When $N_1\leq X$, $N_2> U$, we shift the contour of integration in $u$, $v$ to $\Re u=1$, $\Re v=-1+2/\log X$. We move the contour in $w_1$, $w_2$ to $\Re w_1=0$, $\Re w_2=0$. Similar to the above, the contribution of $N_1\leq X$, $N_2> U$ is
\begin{equation*}
\begin{aligned}
&\ll{\sum_{N_1\leq X}}^d\frac{N_1}{X}N_1^{-\Re\beta}{\sum_{N_2> U}}^d\frac{U}{N_2}N_2^{\Re\alpha}
\int_{-\infty}^{+\infty}\int_{-\infty}^{+\infty}\int_{(-1+2/\log X)}\int_{(1)}\\
&\quad\times\frac{1}{(1+|u-\alpha|)^{20}}\frac{1}{(1+|v+\beta|)^{20}}\frac{1}{(1+|it_1-(v+\beta)|)^{10}}\frac{1}{(1+|it_2-(u-\alpha)|)^{10}}\\
&\quad\times\sum_{(d, 2)=1}\left|\sum_{(n_1, a)=1}\frac{\lambda_f(n_1)}{n^{1/2+it_1}}\chi_{8d}(n_1)V\left(\frac{n_1}{N_1}\right)\right|
\left|\sum_{(n_2, a)=1}\frac{\lambda_f(n_2)}{n^{1/2+it_2}}\chi_{8d}(n_2)V\left(\frac{n_2}{N_2}\right)\right|\\
&\quad\times J_{-2\alpha}\left(\frac{8a^2d}{X}\right)dudvdt_1dt_2\\
&\ll\frac{\tau(a)^5}{a^2}X
\end{aligned}
\end{equation*}
again using Lemma \ref{lem.6.3} and ${\sum_{N_2> U}}^d\frac{U}{N_2}N_2^{\Re\alpha}\ll 1$. \\
(3) When $N_1> X$, $N_2\leq U$, we shift the contour of integration in $u$, $v$ to $\Re u=-1+2/\log X$, $\Re v=1$. We move the contour in $w_1$, $w_2$ to $\Re w_1=0$, $\Re w_2=0$. The range $N_1> X$, $N_2\leq U$ contributes
\begin{equation*}
\begin{aligned}
&\ll{\sum_{N_1>X}}^d\frac{X}{N_1}N_1^{-\Re\beta}{\sum_{N_2\leq U}}^d\frac{N_2^{1-2/\log X}}{U^{1-2/\log X}}N_2^{\Re\alpha}
\int_{-\infty}^{+\infty}\int_{-\infty}^{+\infty}\int_{(1)}\int_{(-1+2/\log X)}\\
&\quad\times\frac{1}{(1+|u-\alpha|)^{20}}\frac{1}{(1+|v+\beta|)^{20}}\frac{1}{(1+|it_1-(v+\beta)|)^{10}}\frac{1}{(1+|it_2-(u-\alpha)|)^{10}}\\
&\quad\times\sum_{(d, 2)=1}\left|\sum_{(n_1, a)=1}\frac{\lambda_f(n_1)}{n^{1/2+it_1}}\chi_{8d}(n_1)V\left(\frac{n_1}{N_1}\right)\right|
\left|\sum_{(n_2, a)=1}\frac{\lambda_f(n_2)}{n^{1/2+it_2}}\chi_{8d}(n_2)V\left(\frac{n_2}{N_2}\right)\right|\\
&\quad\times J_{-2\alpha}\left(\frac{8a^2d}{X}\right)dudvdt_1dt_2\\
&\ll\frac{\tau(a)^5}{a^2}X.
\end{aligned}
\end{equation*}
(4) Finally, when $N_1>X$, $N_2>U$, we now shift the contour of integration in $u$, $v$ to $\Re u=1$, $\Re v=1$. We move the contour in $w_1$, $w_2$ to $\Re w_1=0$, $\Re w_2=0$.
According to Lemma \ref{lem.6.3} and Cauchy--Schwarz, this part is $\ll\frac{\tau(a)^5}{a^2}X$.
Hence,
\begin{equation*}
\mathcal{N}(\beta, -\alpha, 8d)(a>Y_1)\ll\sum_{\substack{(a, 2q)=1\\a>Y_1}}\frac{\tau(a)^5}{a^2}X\ll\frac{X}{Y_1^{1-\varepsilon}}.
\end{equation*}
This completes the proof of the lemma.
\end{proof}
\subsubsection{The contribution of $a\leq Y_1$}
\label{subsubsection: The contribution of a Y_1}

Next, we will handle the part where $a\leq Y_1$. Let $\Phi(x)=J_{-2\alpha}(x)W_{\beta}(n_1/xX)$, it follows from Poisson summation formula that
\begin{equation*}
\begin{aligned}
&\quad\sum_{\substack{(a, 2q)=1\\a\leq Y_1}}\mu(a)\mathop{\sum\sum}_{(n_1n_2, a)=1}
\frac{\lambda_f(n_1)\lambda_f(n_2)}{n_1^{\frac{1}{2}+\beta}n_2^{\frac{1}{2}-\alpha}}W_{-\alpha}\left(\frac{n_2}{U}\right)
\sum_{(d, 2)=1}\chi_{8d}(qn_1n_2)J_{-2\alpha}\left(\frac{8a^2d}{X}\right)W_{\beta}\left(\frac{n_1}{8a^2d}\right)\\
&=\frac{X}{16}\sum_{\substack{(a, 2q)=1\\a\leq Y_1}}\frac{\mu(a)}{a^2}\mathop{\sum\sum}_{(n_1n_2, 2a)=1}\frac{\lambda_f(n_1)\lambda_f(n_2)}{n_1^{\frac{1}{2}+\beta}n_2^{\frac{1}{2}-\alpha}}
\sum_{k\in\mathbb{Z}}(-1)^k\frac{G_k(qn_1n_2)}{qn_1n_2}\check{\Phi}\left(\frac{kX}{16a^2qn_1n_2}\right)\\
&=\frac{X}{16}\sum_{\substack{(a, 2q)=1\\a\leq Y_1}}\frac{\mu(a)}{a^2}\mathop{\sum\sum}_{\substack{(n_1n_2, 2a)=1\\ qn_1n_2=\square}}
\frac{\lambda_f(n_1)\lambda_f(n_2)}{n_1^{\frac{1}{2}+\beta}n_2^{\frac{1}{2}-\alpha}}\prod_{p\mid qn_1n_2}\left(1-\frac{1}{p}\right)W_{-\alpha}\left(\frac{n_2}{U}\right)\check{\Phi}(0)\\
&\quad+\frac{X}{16}\sum_{\substack{(a, 2q)=1\\a\leq Y_1}}\frac{\mu(a)}{a^2}\mathop{\sum\sum}_{(n_1n_2, 2a)=1}
\frac{\lambda_f(n_1)\lambda_f(n_2)}{n_1^{\frac{1}{2}+\beta}n_2^{\frac{1}{2}-\alpha}}W_{-\alpha}\left(\frac{n_2}{U}\right)
\sum_{k\neq0}(-1)^k\frac{G_k(qn_1n_2)}{qn_1n_2}\check{\Phi}\left(\frac{kX}{16a^2qn_1n_2}\right).
\end{aligned}
\end{equation*}
Here, $\check{\Phi}$ is defined as follows
\begin{equation*}
\check{\Phi}\left(\frac{kX}{16a^2qn_1n_2}\right)=\int_{-\infty}^{+\infty}(\cos+\sin)\left(\frac{2\pi x\cdot kX}{16a^2qn_1n_2}\right)J_{-2\alpha}(x)W_{\beta}\left(\frac{n_1}{xX}\right)dx.
\end{equation*}

\paragraph{The contribution of diagonal, square term}
\label{subsubsection: The contribution of diagonal, square term}

\begin{lemma}
For $Y_1\geq(\log X)^{11}$, the case $k=0$ yields the following asymptotic estimate,
\begin{equation}\label{eq.6.4}
\begin{aligned}
&\quad\frac{X}{16}\sum_{\substack{(a, 2q)=1\\a\leq Y_1}}\frac{\mu(a)}{a^2}\mathop{\sum\sum}_{\substack{(n_1n_2, 2a)=1\\ qn_1n_2=\square}}
\frac{\lambda_f(n_1)\lambda_f(n_2)}{n_1^{\frac{1}{2}+\beta}n_2^{\frac{1}{2}-\alpha}}\prod_{p\mid qn_1n_2}\left(1-\frac{1}{p}\right)W_{-\alpha}\left(\frac{n_2}{U}\right)\check{\Phi}(0)\\
&=\frac{X}{2\pi^2}\widetilde{J}(1-2\alpha)\zeta(1+\beta-\alpha)L(1+2\beta, \operatorname{sym}^2 f) L(1-2\alpha, \operatorname{sym}^2 f)L(1+\beta-\alpha, \operatorname{sym}^2 f)\\
&\quad\times Z_{q}^*\left(\tfrac{1}{2}+\beta, \tfrac{1}{2}-\alpha\right)
+\frac{XU^{\alpha-\beta}}{2\pi^2}\widetilde{J}(1-2\alpha)\frac{g_{-\alpha}(\alpha-\beta)}{\alpha-\beta}L(1, \operatorname{sym}^2 f)L(1+2\beta, \operatorname{sym}^2 f)\\
&\quad\times L(1-2\beta, \operatorname{sym}^2 f)Z_{q}^*\left(\tfrac{1}{2}+\beta, \tfrac{1}{2}-\beta\right)
+O_q(X/\log X),
\end{aligned}
\end{equation}
where the big-$O$ depends on $q$.
\end{lemma}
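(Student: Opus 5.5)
We treat the diagonal as a contour-integral evaluation. First unwind $\check{\Phi}(0)=\int_0^\infty J_{-2\alpha}(x)W_\beta(n_1/(xX))\,dx$. Then insert the Mellin representations $W_\beta(y)=\frac{1}{2\pi i}\int_{(c)}g_\beta(v)v^{-1}y^{-v}\,dv$ and $W_{-\alpha}(n_2/U)=\frac{1}{2\pi i}\int_{(c)}g_{-\alpha}(u)u^{-1}(U/n_2)^u\,du$. The $x$-integral becomes $\widetilde{J}(1-2\alpha+v)$, and the $(n_1,n_2)$-sum becomes a Dirichlet series
\[
D_a(v,u)=\mathop{\sum\sum}_{\substack{(n_1n_2,2a)=1\\ qn_1n_2=\square}}\frac{\lambda_f(n_1)\lambda_f(n_2)}{n_1^{1/2+\beta+v}n_2^{1/2-\alpha+u}}\prod_{p\mid qn_1n_2}\Bigl(1-\frac1p\Bigr).
\]

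Next we complete the $a$-sum. The tail $a>Y_1$ costs $\ll X\sum_{a>Y_1}\tau(a)^{O(1)}a^{-2}\cdot(\log X)^{O(1)}$. The polynomial growth comes from the polar behaviour of $D_a$ near $0$: at most $(\log X)^{3}$ from the sym$^2$ and zeta factors, times $\log X$ from the integrals. With $Y_1\ge(\log X)^{11}$ this tail is $O(X/\log X)$, which is where that hypothesis gets used. The completed sum $\sum_{(a,2q)=1}\mu(a)a^{-2}(\cdots)$ is an Euler product. Combined with $\frac{X}{16}$ and the factor $\prod_p(1-1/p)$, it should produce the normalisation $\frac{X}{2\pi^2}$; note that $\frac{1}{16}\prod_{p>2}(1-p^{-2})=\frac{1}{16}\cdot\frac{8}{\pi^2}=\frac{1}{2\pi^2}$, up to the local factors at $p\mid q$. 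The $p\mid qn_1n_2$ weights $(1-1/p)$ combine with the $a$-Möbius sum into the weights $(1-\frac{1}{p+1})$ appearing in $Z_{q'}$.

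Then we factor the Euler product of $D(v,u)$. Since $q$ is odd squarefree (a weight-2 newform of an elliptic curve), the condition $qn_1n_2=\square$ forces $q\mid n_1n_2$ with odd $p$-valuation at primes of $q$. At unramified primes, $\lambda_f(p^j)$ is expressed via Hecke relations, and the local factor of $\sum_{n_1n_2=\square}$ matches
\[
\zeta(1+2\beta+2v)^{-1}L(1+2\beta+2v,\operatorname{sym}^2 f)\,L(1-2\alpha+2u,\operatorname{sym}^2 f)\,\zeta(1+\beta-\alpha+u+v)\,L(1+\beta-\alpha+u+v,\operatorname{sym}^2f)
\]
up to an absolutely convergent factor. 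That factor, together with the $q$-local factors, defines $Z_q^*(\frac12+\beta+v,\frac12-\alpha+u)$, holomorphic for $\Re u,\Re v>-1/4$. Here I would follow Li's and Soundararajan--Young's computation verbatim.

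Finally we do the contour shifts. First shift $v$ to $\Re v=-1/5$. The only pole crossed is at $v=0$, since the $\zeta(1+\beta-\alpha+u+v)$ pole sits at $v=-u+\alpha-\beta$ with real part $\approx -c$ and is avoided if we keep $\Re u$ large. The remaining integral is $O(X\cdot(X/n)^{-1/5})$ in aggregate and hence negligible. Then shift $u$ to $\Re u=-1/5$. This crosses $u=0$, giving the first main term (with $\zeta(1+\beta-\alpha)$), and $u=\alpha-\beta$ from the zeta factor. At $u=\alpha-\beta$ the residue produces $U^{\alpha-\beta}g_{-\alpha}(\alpha-\beta)/(\alpha-\beta)$, and the sym$^2$ arguments become $1+2\beta$, $1-2\beta$ and $1$, matching the second term. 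The leftover integral is $\ll XU^{-1/5}(\log X)^{O(1)}$, which is negligible. Because $|\alpha|,|\beta|\le 1/\log X$, the two poles may nearly coincide. We would treat them together by integrating over a circle of radius $\asymp 1/\log X$, which shows their sum is $O(X(\log X)^{O(1)})$ uniformly, so no cancellation issues arise in the error.

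I expect the main obstacle to be the Euler product bookkeeping. Verifying the exact ramified-prime and $a$-completion factors that reproduce $\frac{1}{2\pi^2}$ and $Z_q^*$ is the tedious step; the analytic shifts are routine.
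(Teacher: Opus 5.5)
Your outline follows the paper: unfold $\check{\Phi}(0)$, Mellin-invert $W_\beta$ and $W_{-\alpha}$, complete the $a$-sum, factor $Z_q$ via Li's Lemma~5.5, and take the residues at $u=0$ and $u=\alpha-\beta$. Your normalisations $\frac{1}{16}\cdot\frac{8}{\pi^2}=\frac{1}{2\pi^2}$ and $(1-\frac1p)(1-\frac1{p^2})^{-1}=1-\frac{1}{p+1}$ are exactly the paper's. The paper completes the $a$-sum pointwise in $(n_1,n_2)$ with error $O(1/Y_1)$ and bounds the leftover trivially by $\frac{X}{Y_1}(\log X)^{10}$. That is where $Y_1\ge(\log X)^{11}$ is used, and it is simpler than your residue-based tail bound. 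However, two steps fail as written.

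\textbf{The contour step.} After Mellin inversion the integrand carries $U^{u}X^{v}$ on top of the overall factor $X$. So the remainder on $\Re v=-\frac15$, $\Re u=c$ has size $X^{4/5}U^{c}$, not ``$X\cdot(X/n)^{-1/5}$''. If you keep $\Re u$ large to avoid the pole of $\zeta(1+\beta-\alpha+u+v)$ at $v=-u+\alpha-\beta$, this remainder is huge. There are two valid fixes.
\begin{itemize}
\item Take $c$ in the narrow window $\frac15+|\Re(\alpha-\beta)|<c\le\frac15+O(\log\log X/\log X)$.
\item Do what the paper does: shift to $\Re u=\frac1{10}$, $\Re v=-\frac15$, cross the pole $v=-u+\alpha-\beta$, and bound its residue. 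That residue contains $U^{u}X^{-u+\alpha-\beta}$, hence is $\ll X(U/X)^{1/10}(\log X)^{O(1)}\ll X/\log X$.
\end{itemize}
Either way the decisive input is $U/X=(\log X)^{-1000}$, which your sketch never uses. If $U\asymp X$, this cross pole would give a genuine main term, so it cannot be disposed of by choosing a contour.

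\textbf{The local factorisation.} The factor $\zeta(1+2\beta+2v)^{-1}$ in your display should not be there. The coefficient of $p^{-1-2\beta-2v}$ in the local factor of $D$ is $\lambda_f(p^2)+O(p^{-1})$. That coefficient is already matched by $L(1+2\beta+2v,\operatorname{sym}^2 f)$ alone. With your extra factor, the leftover quotient would contain a copy of $\zeta(1+2\beta+2v)$, so it could not be an absolutely convergent $Z_q^*$ near $v=0$. Taken literally, it would also insert $\zeta(1+2\beta)^{-1}$ into both main terms, contradicting the residues you then claim. The correct form, which your residue computation implicitly uses, is $\zeta(1+s_1+s_2)L(1+2s_1,\operatorname{sym}^2 f)L(1+2s_2,\operatorname{sym}^2 f)L(1+s_1+s_2,\operatorname{sym}^2 f)Z_q^*$ with $s_1=\beta+v$ and $s_2=-\alpha+u$.

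Finally, squarefreeness of $q$ is neither needed nor true for all elliptic curves; the $p\mid q$ local factors are simply absorbed into $Z_q^*$.
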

\begin{proof}
Note that
\begin{equation}
\begin{aligned}
\sum_{\substack{a \leq Y_1 \\ \left(a, 2 n_1 n_2q\right)=1}} \frac{\mu(a)}{a^2}
& =\frac{1}{\zeta(2)} \prod_{p \mid 2 n_1 n_2q}\left(1-\frac{1}{p^2}\right)^{-1}+O(1/Y_1) \\
& =\frac{8}{\pi^2} \prod_{p \mid n_1 n_2q}\left(1-\frac{1}{p^2}\right)^{-1}+O(1/Y_1).
\end{aligned}
\end{equation}
Switching the order of summation gives that the left side of (\ref{eq.6.4}) is
\begin{equation*}
\begin{aligned}
&\frac{X}{2\pi^2}\mathop{\sum\sum}_{\substack{(n_1n_2, 2)=1\\ qn_1n_2=\square}}\frac{\lambda_f(n_1)\lambda_f(n_2)}{n_1^{\frac{1}{2}+\beta}n_2^{\frac{1}{2}-\alpha}}
\prod_{p\mid qn_1n_2}\frac{p}{p+1}W_{-\alpha}\left(\frac{n_2}{U}\right)\int_{0}^{+\infty}J_{-2\alpha}(x)W_{\beta}\left(\frac{n_1}{xX}\right)dx\\
&+O\left(\frac{X}{Y_1}\mathop{\sum\sum}_{ qn_1n_2=\square}
\frac{\tau(n_1)\tau(n_2)}{n_1^{\frac{1}{2}+\Re\beta}n_2^{\frac{1}{2}-\Re\alpha}}W_{-\alpha}\left(\frac{n_2}{U}\right)
\left|\int_{0}^{+\infty}J_{-2\alpha}(x)W_{\beta}\left(\frac{n_1}{xX}\right)dx\right|\right).
\end{aligned}
\end{equation*}
The error term inside the big-$O$ is $\ll_q \frac{X}{Y_1}(\log X)^{10}$, it is absorbed by $O_q(X/\log X)$. Let
\begin{equation*}
Z_{q'}(u, v)=\mathop{\sum\sum}_{\substack{q'n_1 n_2=\square \\ \left(n_1 n_2, 2\right)=1}}\frac{\lambda_f\left(n_1\right) \lambda_f\left(n_2\right)}{n_1^{1/2+u} n_2^{1/2+v}}\prod_{p \mid qn_1 n_2}\left(1-\frac{1}{p+1}\right),
\end{equation*}
where $q'=1$ or $q'=q$. Hence, we have
\begin{equation}\label{eq.z_q'}
\begin{aligned}
Z_{q'}(u, v)= & \zeta(1+u+v)L\left(1+2u, \operatorname{sym}^2 f\right) \\
& \times L\left(1+2v, \operatorname{sym}^2 f\right)L\left(1+u+v, \operatorname{sym}^2 f\right)Z^*_{q'}\left(\tfrac{1}{2}+u, \tfrac{1}{2}+v\right)
\end{aligned}
\end{equation}
where $Z^*_{q'}(1/2+u, 1/2+v)$ converges absolutely in the region $\Re u,v \geq -1/4+\varepsilon$. The proofs refer to
Lemma $5.5$ in \cite{Li}.
Then the main term is
\begin{equation}\label{eq3.0}
\frac{X}{2\pi^2}\frac{1}{(2\pi i)^2}\int_{(c_1)}\int_{(c_2)}\frac{g_{-\alpha}(u)g_{\beta}(v)}{uv}U^uX^vZ_q\left(\beta+v, -\alpha+u\right)\widetilde{J}(v+1-2\alpha)dudv.
\end{equation}
 Shifting the contour of integration to $\Re u=1/10$, $\Re v=-1/5$ in (\ref{eq3.0}), we encounter two poles along the way: $v=0, v=-u+\alpha-\beta$. The contribution of the remaining integral at $\Re u=1/10$, $\Re v=-1/5$ contributes $\ll X^{\frac{4}{5}}U^{\frac{1}{10}}$. The contribution of $v=0$ is
\begin{equation}\label{eq.3.3}
\begin{aligned}
&\frac{X}{2\pi^2}\frac{1}{2\pi i}\int_{(1/10)}\frac{g_{-\alpha}(u)}{u}U^u\widetilde{J}(1-2\alpha)\zeta(1+u+\beta-\alpha)L(1+2\beta, \operatorname{sym}^2 f)\\
&\times L(1+2u-2\alpha, \operatorname{sym}^2 f)L(1+u+\beta-\alpha, \operatorname{sym}^2 f)Z_q^*\left(\tfrac{1}{2}+\beta, \tfrac{1}{2}-\alpha+u\right)du.
\end{aligned}
\end{equation}
The contribution of $v=-u+\alpha-\beta$ is
\begin{equation*}
\begin{aligned}
&\frac{X}{2\pi^2}\frac{1}{2\pi i}\int_{(1/10)}\frac{g_{-\alpha}(u)g_{\beta}(-u+\alpha-\beta)}{u(-u+\alpha-\beta)}U^uX^{-u+\alpha-\beta}
\widetilde{J}(1-u-\alpha-\beta)L(1, \operatorname{sym}^2 f)\\
&\times L(1+2u-2\alpha, \operatorname{sym}^2 f)L(1-2u+2\alpha, \operatorname{sym}^2 f)Z_q^*\left(\tfrac{1}{2}-u+\alpha, \tfrac{1}{2}+u-\alpha\right)du\\
&\ll X/\log X.
\end{aligned}
\end{equation*}
For \eqref{eq.3.3}, We similarly shift to $\Re u=-1/5+\varepsilon$, the contour giving a contribution of $\ll XU^{-\frac{1}{5}+\varepsilon}$, while the residue at $u=0$ gives
\begin{equation*}
\begin{aligned}
&\frac{X}{2\pi^2}\widetilde{J}(1-2\alpha)\zeta(1+\beta-\alpha)L(1+2\beta, \operatorname{sym}^2 f)\\
&\quad\times L(1-2\alpha, \operatorname{sym}^2 f)L(1+\beta-\alpha, \operatorname{sym}^2 f)Z_{q}^*\left(\tfrac{1}{2}+\beta, \tfrac{1}{2}-\alpha\right),
\end{aligned}
\end{equation*}
the residue at $u=\alpha-\beta$ is
\begin{equation*}
\begin{aligned}
\frac{XU^{\alpha-\beta}}{2\pi^2}\widetilde{J}(1-2\alpha)\frac{g_{-\alpha}(\alpha-\beta)}{\alpha-\beta}L(1, \operatorname{sym}^2 f)L(1+2\beta, \operatorname{sym}^2 f)L(1-2\beta, \operatorname{sym}^2 f)Z_{q}^*\left(\tfrac{1}{2}+\beta, \tfrac{1}{2}-\beta\right).
\end{aligned}
\end{equation*}
This completes the proof of the lemma.
\end{proof}

\paragraph{The contribution of off-diagonal term}
\label{subsubsection: The contribution of off-diagonal term}

Suppose $2^l\|k$, $k=2^lk'$, $(k', 2)=1$. Since $G_{4k}(n)=G_{k}(n)$ for odd $n$, $G_{2^lk'}(n)=G_{2^\delta k'}(n)$ where $\delta=0$ if $2\mid l$ and $\delta=1$ if $2\nmid l$. Then
\begin{align*}
&\quad\mathop{\sum\sum}_{(n_1n_2, 2a)=1}
\frac{\lambda_f(n_1)\lambda_f(n_2)}{n_1^{1/2+\beta}n_2^{1/2-\alpha}}W_{-\alpha}\left(\frac{n_2}{U}\right)\sum_{k\neq0}(-1)^k\frac{G_k(qn_1n_2)}{qn_1n_2}\check{\Phi}\left(\frac{kX}{16a^2qn_1n_2}\right)\\
&=\sum_{\substack{2^lk\neq0\\(k, 2)=1}}(-1)^{2^lk}\mathop{\sum\sum}_{(n_1n_2, 2a)=1}\frac{\lambda_f(n_1)\lambda_f(n_2)}{n_1^{1/2+\beta}n_2^{1/2-\alpha}}\frac{G_{2^lk}(qn_1n_2)}{qn_1n_2}
\check{\Phi}\left(\frac{2^lkX}{16a^2qn_1n_2}\right)W_{-\alpha}\left(\frac{n_2}{U}\right)\\
&=\sum_{l\geq0}\sum_{\substack{k\neq0\\(k, 2)=1}}(-1)^{2^lk}\mathop{\sum\sum}_{(n_1n_2, 2a)=1}\frac{\lambda_f(n_1)\lambda_f(n_2)}{n_1^{1/2+\beta}n_2^{1/2-\alpha}}\frac{G_{2^lk}(qn_1n_2)}{qn_1n_2}
\check{\Phi}\left(\frac{2^lkX}{16a^2qn_1n_2}\right)W_{-\alpha}\left(\frac{n_2}{U}\right)\\
&=-\sum_{\substack{k\neq0\\(k, 2)=1}}\mathop{\sum\sum}_{(n_1n_2, 2a)=1}\frac{\lambda_f(n_1)\lambda_f(n_2)}{n_1^{1/2+\beta}n_2^{1/2-\alpha}}\frac{G_{k}(qn_1n_2)}{qn_1n_2}
\check{\Phi}\left(\frac{kX}{16a^2qn_1n_2}\right)W_{-\alpha}\left(\frac{n_2}{U}\right)\\
&\quad+\sum_{l\geq1}\sum_{\substack{k\neq0\\(k, 2)=1}}\mathop{\sum\sum}_{(n_1n_2, 2a)=1}\frac{\lambda_f(n_1)\lambda_f(n_2)}{n_1^{1/2+\beta}n_2^{1/2-\alpha}}\frac{G_{2^lk}(qn_1n_2)}{qn_1n_2}
\check{\Phi}\left(\frac{2^lkX}{16a^2qn_1n_2}\right)W_{-\alpha}\left(\frac{n_2}{U}\right).
\end{align*}
We assume that $|\gamma_1|, |\gamma_2|, |\gamma|\leq(\log X)^{-1}$. Let 
\begin{equation*}
T_{l}=\sum_{\substack{k\neq0\\(k, 2)=1}}\mathop{\sum\sum}_{(n_1n_2, 2a)=1}\frac{\lambda_f(n_1)\lambda_f(n_2)}{n_1^{1/2+\gamma_1}n_2^{1/2+\gamma_2}}\frac{G_{2^lk}(qn_1n_2)}{qn_1n_2}
\check{\Phi}\left(\frac{2^lkX}{16a^2qn_1n_2}\right)W_{\gamma_2}\left(\frac{n_2}{U}\right),
\end{equation*}
the Fourier-type transform is given by
\begin{equation*}
\check{\Phi}\left(\frac{2^lkX}{16a^2qn_1n_2}\right)=\int_{-\infty}^{+\infty}(\cos+\sin)\left(\frac{2\pi x\cdot 2^lkX}{16a^2qn_1n_2}\right)
J_{2\gamma}(x)W_{\gamma_1}\left(\frac{n_1}{xX}\right)dx.
\end{equation*}
A dyadic partition of unity using the functions $G$, $V$ as shown in \eqref{eq.G}, \eqref{eq.V} and applying the Mellin inversion for $G(n_1/N_1)$ and $G(n_2/N_2)$, we obtain
\begin{equation*}
\begin{aligned}
T_{l}&={\sum_{N_1, N_2}}^d\frac{1}{(2\pi i)^4}\int_{(c_1)}\int_{(c_2)}\int_{-\infty}^{+\infty}\int_{-\infty}^{+\infty}\widetilde{G}(it_1-u-\gamma_1)\widetilde{G}(it_2-v-\gamma_2)\\
&\quad\times N_1^{it_1-u-\gamma_1}N_2^{it_2-v-\gamma_2}\frac{g_{\gamma_1}(u)g_{\gamma_2}(v)}{uv}X^uU^v\sum_{\substack{k\neq0\\(k, 2)=1}}\mathop{\sum\sum}_{(n_1n_2, 2a)=1}
\frac{\lambda_f(n_1)\lambda_f(n_2)}{n_1^{1/2+it_1}n_2^{1/2+it_2}}\frac{G_{2^\delta k}(qn_1n_2)}{qn_1n_2}\\
&\quad\times\check{J}_{u+2\gamma}\left(\frac{2^lkX}{16a^2qn_1n_2}\right)V\left(\frac{n_1}{N_1}\right)V\left(\frac{n_2}{N_2}\right)dt_1dt_2dudv,
\end{aligned}
\end{equation*}
where the Fourier-type transform of $J_{u+2\gamma}$ is given by
\begin{equation*}
\check{J}_{u+2\gamma}\left(\frac{2^lkX}{16a^2qn_1n_2}\right)=\int_{-\infty}^{+\infty}(\cos+\sin)\left(\frac{2\pi x\cdot2^lkX}{16a^2qn_1n_2}\right)J(x)x^{u+2\gamma}dx.
\end{equation*}
Since $J_{u+2\gamma}$ is supported on $[0, \infty)$, by Mellin inversion and using (17.43.3, 17.43.4) of \cite{GR} (for detailed process, please refer to \cite[p.1104]{SY}), we have
\begin{equation*}
\check{J}_{u+2\gamma}\left(\frac{2^lkX}{16a^2qn_1n_2}\right)=\frac{1}{2\pi i}\int_{(1/2)}\widetilde{J}(u+2\gamma+1-s)\Gamma(s)(\cos+\sgn(k)\sin)\left(\frac{\pi s}{2}\right)\left(\frac{2\pi kX\cdot2^l}{16a^2qn_1n_2}\right)^{-s}ds.
\end{equation*}
Let
\begin{equation*}
\begin{aligned}
T(N_1, N_2; \theta, t_1, t_2, u, \gamma)&=\sum_{\substack{k\neq0\\(k, 2)=1}}\mathop{\sum\sum}_{(n_1n_2, 2a)=1}
\frac{\lambda_f(n_1)\lambda_f(n_2)}{n_1^{1/2+it_1}n_2^{1/2+it_2}}\frac{G_{2^\delta k}(qn_1n_2)}{qn_1n_2}\\
&\quad\times\check{J}_{u+2\gamma}\left(\frac{kX\theta}{2qn_1n_2}\right)V\left(\frac{n_1}{N_1}\right)V\left(\frac{n_2}{N_2}\right),
\end{aligned}
\end{equation*}
so that we can express $T_{l}$ as
\begin{equation*}
\begin{aligned}
T_{l}&={\sum_{N_1, N_2}}^d\frac{1}{(2\pi i)^4}\int_{(c_1)}\int_{(c_2)}\int_{-\infty}^{+\infty}\int_{-\infty}^{\infty}T(N_1, N_2; 2^l/8a^2, t_1, t_2, u, \gamma)\\
&\quad\times\widetilde{G}(it_1-u-\gamma_1)\widetilde{G}(it_2-v-\gamma_2)N_1^{it_1-u-\gamma_1}N_2^{it_2-v-\gamma_2}\frac{g_{\gamma_1}(u)g_{\gamma_2}(v)}{uv}X^uU^vdt_1dt_2dudv.
\end{aligned}
\end{equation*}
For convenience, we define
\begin{equation*}
\begin{aligned}
&\quad U(N_1, N_2; 2^l/8a^2, t_1, t_2)\\
&:=\max\{|T(N_1, N_2; 2^l/8a^2, t_1, t_2, u, \gamma)|: -4+2/\log X\leq\Re u\leq4-|\Re\gamma|,\,|\gamma|\leq(\log X)^{-1}\}.
\end{aligned}
\end{equation*}
\begin{lemma}\label{lem.6.4}
With notation as above, we have the following inequality
\begin{equation}\label{eq.6.5}
\begin{aligned}
T_{l}&\ll {\sum_{N_1, N_2}}^d\left(1+\frac{N_1}{X}\right)^{-4}\left(1+\frac{N_2}{U}\right)^{-4}\\
&\quad\times\int_{-\infty}^{+\infty}\int_{-\infty}^{+\infty}U(N_1, N_2; 2^l/8a^2, t_1, t_2)\frac{dt_1}{(1+|t_1|)^{10}}\frac{dt_2}{(1+|t_2|)^{10}}.
\end{aligned}
\end{equation}
\end{lemma}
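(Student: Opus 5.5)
For fixed dyadic $N_1,N_2$, I will bound the integral representation of $T_l$ given just before Lemma \ref{lem.6.4} by choosing the contours in $u$ and $v$ separately according to the sizes of $N_1/X$ and $N_2/U$. This mirrors the nine-case analysis in the proof of Proposition \ref{Pro3.2}. First I shift the $t_1,t_2$ integration to the real lines $\Re=0$, which I may do because $V(n_i/N_i)$ and $\widetilde G$ are entire. The factor $\widetilde{G}(it_1-u-\gamma_1)$ then decays like $(1+|t_1-\Im u|)^{-A}$ for any $A$, and $g_{\gamma_1}(u)$ decays exponentially in $|\Im u|$ by Stirling; the same holds for $t_2$ and $v$. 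The $u$- and $v$-integrals are therefore dominated by $(1+|t_1|)^{-10}(1+|t_2|)^{-10}$ times $\sup_u|T(\cdots)|$ along the chosen lines. As long as $\Re u$ stays in $[-4+2/\log X,4-|\Re\gamma|]$, this supremum is at most $U(N_1,N_2;2^l/8a^2,t_1,t_2)$.

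The choice of contours goes as follows. For $u$: if $N_1\le X$, move to $\Re u=-1+2/\log X$. There is no pole at $u=0$ to worry about, because we only need an upper bound with the prefactor $(1+N_1/X)^{-4}\asymp 1$; I would move to $\Re u=2/\log X$ instead of crossing $0$, or else note that the residue at $u=0$ is of the same shape with $\Re u=0$. The size factor $N_1^{-\Re u}X^{\Re u}$ is then $\ll 1$, up to the harmless factor $N_1^{-\Re\gamma_1}\ll 1$, which holds since $|\gamma_1|\le 1/\log X$ and $N_1\ll X^{O(1)}$; the range of larger $N_1$ is handled by the decay anyway. If $N_1>X$, move to $\Re u=4-|\Re\gamma|$, which gives $(X/N_1)^{4-|\Re\gamma|}\ll(1+N_1/X)^{-4}$ because $(N_1/X)^{|\Re\gamma|}\ll 1$ in the relevant ranges. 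For $v$ I do the same with $U$ in place of $X$, using $\Re v=2/\log X$ or $\Re v=4$. The $v$-integral does not enter $T(\cdot)$, since $T$ depends only on $u$ and $\gamma$; so for $v$ the only requirement is integrability, which is provided by $g_{\gamma_2}(v)/v$ and $\widetilde G$.

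The one delicate point is the $1/u$ and $1/v$ factors near the lines of real part $\asymp 1/\log X$, which could cost a factor of $\log X$. I would avoid this by shifting past the origin to real part $-1+2/\log X$, exactly as in Proposition \ref{Pro3.2}. In the range $N_1\le X$ this gives $(N_1/X)^{1-2/\log X}\le 1$, and the residue at $u=0$ is simply $T$ evaluated at $u=0$, with the $t_1$-integral against $\widetilde G$ absorbed into $(1+|t_1|)^{-10}$. The residue at $v=0$ behaves likewise. These residues are bounded by $U(\cdots)$ with the same weights, and summing over the cases yields \eqref{eq.6.5}.
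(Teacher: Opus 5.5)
Your strategy is the same as the paper's. You split according to $N_1\lessgtr X$ and $N_2\lessgtr U$, shift $u$ and $v$ left or right accordingly, and bound both the shifted integrals and the residues at $u=0$, $v=0$ by $U(N_1,N_2;2^l/8a^2,t_1,t_2)$. The decay of $\widetilde G$ and of $g_{\gamma_i}$ then supplies the weights $(1+|t_i|)^{-10}$. The repairs needed are small. As written, however, two of your contour choices do not give the inequality with an $X$-independent implied constant, and your justification for one of them is mistaken.

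\textbf{Right shifts.} On $\Re u=4-|\Re\gamma|$ one has $|N_1^{it_1-u-\gamma_1}X^u|=(X/N_1)^4X^{-|\Re\gamma|}N_1^{|\Re\gamma|-\Re\gamma_1}$, and the last factor can be as large as $N_1^{2/\log X}$. The dyadic $N_1$ are not confined to $N_1\le X^{O(1)}$, so this is not $\ll(1+N_1/X)^{-4}$ uniformly. Saying the large $N_1$ are ``handled by the decay anyway'' only gives the weaker weight $(X/N_1)^{4-2/\log X}$. Likewise, $\Re v=4$ leaves the factor $N_2^{-\Re\gamma_2}$, which is unbounded when $\Re\gamma_2<0$.

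The paper instead shifts to $\Re u=4-\Re\gamma_1$ and $\Re v=4-\Re\gamma_2$. Then $N_1^{-\Re(u+\gamma_1)}=N_1^{-4}$ and $N_2^{-\Re(v+\gamma_2)}=N_2^{-4}$ exactly, leaving only $X^{-\Re\gamma_1},U^{-\Re\gamma_2}\ll1$. The $u$-line can exceed $4-|\Re\gamma|$ by up to $2/\log X$, so strictly the maximum defining $U(\cdot)$ should be taken over a marginally wider strip; Lemma \ref{lem.TN1N2} holds there equally well.

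\textbf{Left shifts.} Moving to $\Re u=-1+2/\log X$ does not remove the near-singularity you identified; it only trades $1/u$ for $\Gamma(1+\gamma_1+u)$. The pole of this Gamma factor at $u=-1-\gamma_1$ lies at horizontal distance between $1/\log X$ and $3/\log X$ from that line. So $|g_{\gamma_1}(u)/u|$ reaches size $\asymp\log X$ on the line, and its integral along the line is $\asymp\log\log X$. The paper's line $\Re u=-4+2/\log X$ has the same defect near $u=-4-\gamma_1$, hidden in its bound $(1+|u+\gamma_1|)^{-20}$.

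The clean choice for $N_1\le X$ is $\Re u=-1/2$:
\begin{itemize}
\item only the pole $u=0$ is crossed;
\item since $g_{\gamma_1}(0)=1$, the residue there is $\widetilde G(it_1-\gamma_1)N_1^{it_1-\gamma_1}T(N_1,N_2;2^l/8a^2,t_1,t_2,0,\gamma)$, which is bounded by $U(\cdot)$ times the correct weight;
\item on the line, $g_{\gamma_1}(u)/u$ is bounded with exponential decay, and $|(X/N_1)^uN_1^{-\gamma_1}|=(N_1/X)^{1/2}N_1^{-\Re\gamma_1}\le e$.
\end{itemize}
Use $\Re v=-1/2$ in the same way when $N_2\le U$.

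These losses would be harmless in the later application, where a large power of $\log X$ is to spare. They do matter for the inequality exactly as stated.
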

\begin{proof} We divide this sum into four cases as follows.\\
(1) $N_1>X$, $N_2>U$. Moving the contours of the integration to $\Re u=4-\Re\gamma_1$, $\Re v=4-\Re\gamma_2$. Since
\begin{equation*}
\left(\frac{N_1}{X}\right)^{-4}\ll\left(1+\frac{N_1}{X}\right)^{-4},\quad\left(\frac{N_2}{U}\right)^{-4}\ll\left(1+\frac{N_2}{U}\right)^{-4},\quad X^{-\Re\gamma_1},U^{-\Re\gamma_2}\ll1,
\end{equation*}
this case is bounded by
\begin{equation*}
\begin{aligned}
&\ll{\sum_{\substack{N_1>X\\ N_2>U}}}^d
\left(1+\frac{N_1}{X}\right)^{-4}\left(1+\frac{N_2}{U}\right)^{-4}\\
&\quad\times\int_{(4-\Re\gamma_2)}\int_{(4-\Re\gamma_1)}\int_{-\infty}^{+\infty}\int_{-\infty}^{+\infty}U(N_1, N_2; 2^l/8a^2, t_1, t_2)\\
&\quad\frac{dt_1}{(1+|it_1-u-\gamma_1|)^{10}}\frac{dt_2}{(1+|it_2-v-\gamma_2|)^{10}}\frac{du}{(1+|u+\gamma_1|)^{20}}\frac{dv}{(1+|v+\gamma_2|)^{20}}\\
&\ll{\sum_{\substack{N_1>X\\ N_2>U}}}^d
\left(1+\frac{N_1}{X}\right)^{-4}\left(1+\frac{N_2}{U}\right)^{-4}\\
&\quad\times\int_{(4-\Re\gamma_2)}\int_{(4-\Re\gamma_1)}\int_{-\infty}^{+\infty}\int_{-\infty}^{+\infty}U(N_1, N_2; 2^l/8a^2, t_1, t_2)\\
&\quad\times\frac{dt_1}{(1+|t_1|)^{10}}\frac{dt_2}{(1+|t_2|)^{10}}\frac{du}{(1+|u+\gamma_1|)^{10}}\frac{dv}{(1+|v+\gamma_2|)^{10}}.
\end{aligned}
\end{equation*}
(2) $N_1\leq X$, $N_2>U$. Moving the contours of the integration to $\Re u=-4+2/\log X$, $\Re v=4-\Re\gamma_2$. The integrand has pole at $u=0$, and possibly at $u=-1-\gamma_1,-2-\gamma_1, -3-\gamma_1, -4-\gamma_1$ (those poles except for at $u=0$ arise from $g_{\gamma_1}(u)$-term). One may easily verify that these poles give sufficiently small bound so their contributions are negligable. Note that, in this case
\begin{equation*}
\left(\frac{N_1}{X}\right)^{4}\ll\left(1+\frac{N_1}{X}\right)^{-4},\quad\left(\frac{N_2}{U}\right)^{-4}\ll\left(1+\frac{N_2}{U}\right)^{-4},\quad N_1^{-\Re \gamma_1}, U^{-\Re\gamma_2}\ll1,
\end{equation*}
then the integration over integrations to $\Re u=-4+2/\log X$ and $\Re v=4-\Re\gamma_2$ is bounded by
\begin{align*}
&\ll\sum_{\substack{N_1\leq X\\ N_2>U}}
\left(1+\frac{N_1}{X}\right)^{-4}\left(1+\frac{N_2}{U}\right)^{-4}\\
&\quad\times\int_{(4-\Re\gamma_2)}\int_{(-4+2/\log X)}\int_{-\infty}^{+\infty}\int_{-\infty}^{+\infty}U(N_1, N_2; 2^l/8a^2, t_1, t_2)\\
&\quad\times\frac{dt_1}{(1+|it_1-u-\gamma_1|)^{10}}\frac{dt_2}{(1+|it_2-v-\gamma_2|)^{10}}\frac{du}{(1+|u+\gamma_1|)^{20}}\frac{dv}{(1+|v+\gamma_2|)^{20}}\\
&\ll{\sum_{\substack{N_1\leq X\\ N_2>U}}}^d
\left(1+\frac{N_1}{X}\right)^{-4}\left(1+\frac{N_2}{U}\right)^{-4}\\
&\quad\times\int_{(4-\Re\gamma_2)}\int_{(-4+2/\log X)}\int_{-\infty}^{+\infty}\int_{-\infty}^{+\infty}U(N_1, N_2; 2^l/8a^2, t_1, t_2)\\
&\quad\times\frac{dt_1}{(1+|t_1|)^{10}}\frac{dt_2}{(1+|t_2|)^{10}}\frac{du}{(1+|u+\gamma_1|)^{10}}\frac{dv}{(1+|v+\gamma_2|)^{10}}.
\end{align*}
(3) $N_1> X$, $N_2\leq U$. Moving the contours of the integration to $\Re u=4-\Re\gamma_1$, $\Re v=-4+2/\log X$. The integrand has pole at $v=0$, and possibly at $v=-1-\gamma_2,-2-\gamma_2, -3-\gamma_2, -4-\gamma_2$ (those poles except for at $v=0$ arise from $g_{\gamma_2}(v)$-term). One may easily verify that these poles give sufficiently small bound so their contributions are negligable. Note that, in this case
\begin{equation*}
\left(\frac{N_1}{X}\right)^{-4}\ll\left(1+\frac{N_1}{X}\right)^{-4},\quad\left(\frac{N_2}{U}\right)^{4}\ll\left(1+\frac{N_2}{U}\right)^{-4},\quad X^{-\Re\gamma_1},N_2^{-\Re\gamma_2}\ll1
\end{equation*}
then the integration over integrations to $\Re u=4-\Re\gamma_1$ and $\Re v=-4+2/\log X$ is bounded by
\begin{equation*}
\begin{aligned}
&\ll\sum_{\substack{N_1> X\\ N_2\leq U}}
\left(1+\frac{N_1}{X}\right)^{-4}\left(1+\frac{N_2}{U}\right)^{-4}\\
&\quad\times\int_{(-4+2/\log X)}\int_{(4-\Re\gamma_1)}\int_{-\infty}^{+\infty}\int_{-\infty}^{+\infty}U(N_1, N_2; 2^l/8a^2, t_1, t_2)\\
&\quad\times\frac{dt_1}{(1+|it_1-u-\gamma_1|)^{10}}\frac{dt_2}{(1+|it_2-v-\gamma_2|)^{10}}\frac{du}{(1+|u+\gamma_1|)^{20}}\frac{dv}{(1+|v+\gamma_2|)^{20}}\\
&\ll{\sum_{\substack{N_1>X\\ N_2\leq U}}}^d
\left(1+\frac{N_1}{X}\right)^{-4}\left(1+\frac{N_2}{U}\right)^{-4}\\
&\quad\times\int_{(-4+2/\log X)}\int_{(4-\Re\gamma_1)}\int_{-\infty}^{+\infty}\int_{-\infty}^{+\infty}U(N_1, N_2; 2^l/8a^2, t_1, t_2)\\
&\quad\frac{dt_1}{(1+|t_1|)^{10}}\frac{dt_2}{(1+|t_2|)^{10}}\frac{du}{(1+|u+\gamma_1|)^{10}}\frac{dv}{(1+|v+\gamma_2|)^{10}}.
\end{aligned}
\end{equation*}
(4) $N_1\leq X$, $N_2\leq U$. Moving the contours of the integration to $\Re u=-4+2/\log X$, $\Re v=-4+2/\log X$. The integrand has pole at $u=0$, $v=0$, and possibly at $u=-1-\gamma_1,-2-\gamma_1, -3-\gamma_1, -4-\gamma_1$, $v=-1-\gamma_2,-2-\gamma_2, -3-\gamma_2, -4-\gamma_2$ (those poles except for at $u=0$, $v=0$ arise from $g_{\gamma_1}(u)$, $g_{\gamma_2}(v)$-term). These poles give sufficiently small bound so their contributions are negligable.
Note that
\begin{equation*}
\left(\frac{N_1}{X}\right)^{4}\ll1\ll\left(1+\frac{N_1}{X}\right)^{-4},\quad\left(\frac{N_2}{U}\right)^{4}\ll1\ll\left(1+\frac{N_2}{U}\right)^{-4},\quad N_1^{-\Re \gamma_1},N_2^{-\Re \gamma_2}\ll1,
\end{equation*}
in a similar way, the sum over $N_1\leq X$, $N_2\leq U$ is
\begin{equation*}
\begin{aligned}
&\ll{\sum_{\substack{N_1\leq X\\ N_2\leq U}}}^d
\left(1+\frac{N_1}{X}\right)^{-4}\left(1+\frac{N_2}{U}\right)^{-4}\\
&\quad\times\int_{(-4+2/\log X)}\int_{(-4+2/\log X)}\int_{-\infty}^{+\infty}\int_{-\infty}^{+\infty}U(N_1, N_2; 2^l/8a^2, t_1, t_2)\\
&\quad\times\frac{dt_1}{(1+|t_1|)^{10}}\frac{dt_2}{(1+|t_2|)^{10}}\frac{du}{(1+|u+\gamma_1|)^{10}}\frac{dv}{(1+|v+\gamma_2|)^{10}}.
\end{aligned}
\end{equation*}
Therefore, from the above, we can obtain formula \eqref{eq.6.5}.
\end{proof}
The estimation of $T(N_1, N_2; \theta, t_1, t_2, u, \gamma)$ is shown by the following Lemma \ref{lem.TN1N2}.
\begin{lemma}\label{lem.TN1N2}
With notation as above, for $-4+2/\log X\leq\Re u\leq4-|\Re\gamma|$, we have
\begin{equation*}
T(N_1, N_2; \theta, t_1, t_2, u, \gamma)\ll\frac{qa^\varepsilon\sqrt{N_1N_2}}{\theta X}(1+|t_1|)^{2}(1+|t_2|)^{2}.
\end{equation*}
\end{lemma}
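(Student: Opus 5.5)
We will prove this by the argument of Li \cite[Lemma 6.4 / Proposition 6.2]{Li} (itself going back to Soundararajan--Young \cite{SY}): insert the Mellin representation of $\check{J}_{u+2\gamma}$ and then bound the resulting character sums by the large sieve in Lemma \ref{lem.SMNt}. Using the formula just above with $\frac{2^lkX}{16a^2qn_1n_2}$ replaced by $\frac{kX\theta}{2qn_1n_2}$, we write
\begin{equation*}
T(N_1,N_2;\theta,t_1,t_2,u,\gamma)=\frac{1}{2\pi i}\int_{(c)}\widetilde{J}(u+2\gamma+1-s)\Gamma(s)\Big(\frac{\pi X\theta}{q}\Big)^{-s}\sum_{\pm}(\cos\pm\sin)\Big(\frac{\pi s}{2}\Big)\,\mathcal{S}_\pm(s)\,ds,
\end{equation*}
where
\begin{equation*}
\mathcal{S}_\pm(s)=\sum_{\substack{\pm k>0\\(k,2)=1}}\frac{1}{|k|^{s}}\mathop{\sum\sum}_{(n_1n_2,2a)=1}\frac{\lambda_f(n_1)\lambda_f(n_2)}{n_1^{1/2+it_1-s}n_2^{1/2+it_2-s}}\frac{G_{2^\delta k}(qn_1n_2)}{qn_1n_2}V\Big(\frac{n_1}{N_1}\Big)V\Big(\frac{n_2}{N_2}\Big).
\end{equation*}
Since $\widetilde{J}$ decays rapidly in vertical strips, and $\Gamma(s)(\cos\pm\sin)(\pi s/2)$ grows at most polynomially in $|\Im s|$, the $s$-integral converges absolutely. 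Uniformity for $-4+2/\log X\le\Re u\le 4-|\Re\gamma|$ only costs constants, because $\widetilde J$ is entire.

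The core step is to evaluate the $k$-sum through the multiplicativity of $G_k$ in Lemma \ref{lem2.2}. Write $n=qn_1n_2$ and split $n=n'n''$, with $n'$ squarefree and coprime to $k$ and $n''$ composed of primes dividing $k$. For $p\nmid k$ the formula gives $G_k(p)=\left(\frac{k}{p}\right)\sqrt{p}$ and $G_k(p^\beta)=0$ for $\beta\ge2$. Hence the terms with $(n,k)=1$ reduce to
\begin{equation*}
\frac{G_k(n)}{n}=\frac{1}{\sqrt{n}}\Big(\frac{k}{n}\Big),
\end{equation*}
and the remaining terms are handled as an Euler-product correction. After extracting $k=k_1k_2^2$ with $k_1$ squarefree, the sum over $n_1,n_2$ becomes a twisted sum of the shape $\sum_{n_1}\lambda_f(n_1)\left(\frac{k_1}{n_1}\right)n_1^{-1/2-it_1+s}\cdots$. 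This is the same kind of bilinear form as in Lemma \ref{lem.SMNt}, with $m=k_1$ ranging over (essentially) fundamental discriminants.

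We then move the $s$-contour to the right, say to $\Re s=1+\varepsilon$, so that the $k$-sum converges absolutely. This gives the factor $(X\theta/q)^{-1}(N_1N_2)^{1}$ from $|k|^{-s}$ and $n^{s}$. Combined with the prefactor $1/\sqrt{n}$ and the $1/\sqrt{n_1n_2}$ normalisation, the $n$-sums then carry size about $\sqrt{N_1N_2}$. We split $k_1$ dyadically as $M\le|k_1|<2M$, apply Cauchy--Schwarz, and bound each factor by Lemma \ref{lem.SMNt}, which gives
\begin{equation*}
\Big[(1+|t_1|)^2\big(M+N_1\log(2+N_1/M)\big)\Big]^{1/2}\Big[(1+|t_2|)^2\big(M+N_2\log(2+N_2/M)\big)\Big]^{1/2}.
\end{equation*}
The factor $|k_1|^{-\Re s}$ then makes the dyadic sum over $M$ converge. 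Following Li, the choice of $\Re s$ must be adapted to the ranges $M\lessgtr N_1N_2/(X\theta)$, shifting slightly left of $1$ where $M$ is large, so that the $\log$ factors and the sum over $M$ cost only $O(1)$ rather than a power of $\log$. The sum over $k_2$, the Euler-product corrections at $p\mid a$, at $p\mid q$, and at primes dividing both $k$ and $n$ contribute $\ll q\,a^\varepsilon$, by the divisor bound and the explicit values in Lemma \ref{lem2.2}.

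The main obstacle will be this last point: obtaining the bound $\frac{\sqrt{N_1N_2}}{\theta X}$ with no logarithmic loss. This requires the sharp form of Lemma \ref{lem.SMNt}, with $\log(2+N/M)$ rather than $X^\varepsilon$, and careful control of the non-squarefree part of $k$ and of the primes where $k$ and $qn_1n_2$ share factors. There we plan to follow Li's treatment, writing the local factors as an absolutely convergent Dirichlet series in $k_2$ and using $\sum_{k_2}\tau(k_2)^{O(1)}k_2^{-2\Re s}\ll1$.
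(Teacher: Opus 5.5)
There is a genuine gap at the decisive step. You apply Lemma \ref{lem.SMNt} to the $n_i$-sums at their original length $N_i$. At $\Re s=\sigma$, a block $|k_1|\sim M$ then costs about $M^{-\sigma}(M+N_1)^{1/2}(M+N_2)^{1/2}$ up to logarithms. When $M$ is small compared with $N_1,N_2$, the $N_i$-terms dominate. Already the block $|k_1|<2$ contributes $\asymp (q/\theta X)^{\sigma}(N_1N_2)^{\sigma-1/2}\sqrt{N_1N_2}\asymp K^{\sigma}$, with $K=8qN_1N_2/(\theta X)$, whereas the lemma claims $\asymp K/\sqrt{N_1N_2}$.

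You must keep $\sigma>0$: the Mellin representation of $\check{J}$ has the pole of $\Gamma(s)$ at $s=0$, and the $k$-sum needs decay. In the range that matters for $T_l$ ($N_1\lesssim X$, $N_2\lesssim U$, $\theta\geq 1/(8Y_1^2)$) one has $K<\sqrt{N_1N_2}$. Hence $K^{\sigma}>K/\sqrt{N_1N_2}$ for every admissible $\sigma$. For fixed $\sigma$ such as $3/5$ the loss is a power of $X$; for example, take $N_1\asymp X$, $N_2\asymp U$, $\theta\asymp 1$. No adaptation of the $s$-contour rescues this.

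Your adaptation is also pointed the wrong way. Even with an optimal mean-square bound, a block contributes $M^{1-\sigma}$. So one needs $\sigma<1$ for $|k_1|\le K$ and $\sigma>1$ for $|k_1|>K$ (the paper takes $3/5$ and $6/5$), not $\sigma<1$ for large $M$.

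What is missing is the $L$-function structure in $n_1,n_2$ for each fixed $k_1$. These sums are pieces of $L(\cdot,f\otimes\chi_m)$, whose conductor is $\asymp k_1^2$. A piece much longer than $|k_1|$ must be shortened via the functional equation, not bounded by the large sieve. The paper does this as follows:
\begin{itemize}
\item It invokes Li's Lemma 2.5 to write the $(k_2,n_1,n_2)$-sum as $L(\tfrac12+\rho_1,f\otimes\chi_m)L(\tfrac12+\rho_2,f\otimes\chi_m)Y(\rho_1,\rho_2,\rho_3;k_1)$.
\item It expands $Y$ as a Dirichlet series in $r_1,r_2,r_3$ with dyadic cut-offs $R_1,R_2$.
\item It moves the $w_i$-contours to $\Re w_i=-1/2$, so that $N_i$ enters only through $N_i^{\Re s-1/2}$.
\item It controls the $k_1$-sum by a mean square $\ll\mathcal{K}_1(1+|t_1|)^{O(1)}(1+|t_2|)^{O(1)}$ that does not depend on the length of the $n$-sums. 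The bound recorded there, with $(1+|t|)^{3/2}\log^{1/2}(2+|t|)$, is of the length-uniform type of Lemma \ref{lem.6.3}, not of the form $M+N\log(2+N/M)$.
\end{itemize}

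The correction factors also cannot be handled by the divisor bound alone. $Y$ contains $1/\zeta(1+\rho_1+\rho_2)$ and $1/L(1+2\rho_i,\operatorname{sym}^2 f)$ evaluated on the line $\Re\rho_i=0$. The paper uses Li's Lemma 5.7, that is, the decay $\exp(-c_1\sqrt{\log R_1R_2})$ coming from the zero-free region, to sum over $R_1,R_2$ without logarithmic loss.
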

\begin{proof}
We apply the Mellin inversion to $G(n_1/N_1)$, $G(n_2/N_2)$, then $T(N_1, N_2; \theta, t_1, t_2, u, \gamma)$ is a finite sum of terms of the form
\begin{equation*}
\begin{aligned}
\mathbb{T}&=\frac{1}{(2\pi i)^3}\int_{(1/2)}\int_{(2)}\int_{(2)}\sum_{\substack{k \neq 0\\(k, 2)=1}}\left(\frac{2q}{2\pi\theta Xk}\right)^s\widetilde{J}(u+2\gamma+1-s) \Gamma(s)\left(\cos+\sgn(k)\sin\right)\left(\frac{\pi s}{2}\right)\\
&\quad\times\mathop{\sum\sum}_{\substack{n_1, n_2 \\ \left(n_1 n_2, 2a\right)=1}}\frac{\lambda_f\left(n_1\right) \lambda_f\left(n_2\right)}{n_1^{1/2-s+it_1+w_1} n_2^{1/2-s+it_2+w_2}}
\frac{G_{2^\delta k}\left(qn_1 n_2\right)}{qn_1 n_2}\widetilde{G}(w_1)\widetilde{G}(w_2)N_{1}^{w_1}N_{2}^{w_2}dw_1dw_2ds.
\end{aligned}
\end{equation*}
Further, we write $2^\delta k=k_1k_{2}^{2}$, where $k_1$ is squarefree and $(k_2, 2)=1$, $k_2$ is positive. Let $K=8qN_1N_2/\theta X$. Then
\begin{equation*}
\begin{aligned}
\mathbb{T}&=\frac{1}{(2\pi i)^3}\int_{(1/2)}\int_{(2)}\int_{(2)}{\sum_{|k_1|\geq1}}^*\left(\frac{2q}{2^{1-\delta}\pi\theta Xk_1}\right)^s\widetilde{J}(u+2\gamma+1-s) \Gamma(s)
\left(\cos+\sgn(k_1)\sin\right)\left(\frac{\pi s}{2}\right)\\
&\quad\times\widetilde{G}(w_1)\widetilde{G}(w_2)N_{1}^{w_1}N_{2}^{w_2}H(1/2-s+it_1+w_1, 1/2-s+it_2+w_2, s; k_1, a)dw_1dw_2ds\\
&=\frac{1}{(2\pi i)^3}\int_{(1/2)}\int_{(2)}\int_{(2)}\left({\sum_{|k_1|\leq K}}^*+{\sum_{|k_1|> K}}^*\right)\left(\frac{2q}{2^{1-\delta}\pi\theta Xk_1}\right)^s\\
&\quad\times\widetilde{J}(u+2\gamma+1-s) \Gamma(s)\left(\cos+\sgn(k_1)\sin\right)\left(\frac{\pi s}{2}\right)\widetilde{G}(w_1)\widetilde{G}(w_2)N_{1}^{w_1}N_{2}^{w_2}\\
&\quad\times H(1/2-s+it_1+w_1, 1/2-s+it_2+w_2, s; k_1, a)dw_1dw_2ds
\end{aligned}
\end{equation*}
where ${\sum}^*$ denotes the sum over squarefree integers and
\begin{equation*}
H(\rho_1, \rho_2, \rho_3; k_1, a):=\sum_{\substack{k_2\geq1\\(k_2, 2)=1}}\mathop{\sum\sum}_{\substack{n_1, n_2 \\ \left(n_1 n_2, 2a\right)=1}}\frac{\lambda_f\left(n_1\right) \lambda_f\left(n_2\right)}{n_1^{\rho_1} n_2^{\rho_2}k_{2}^{2\rho_3}}\frac{G_{k_1k_{2}^{2}}\left(qn_1 n_2\right)}{qn_1 n_2}.
\end{equation*}
It can be known from Lemma 2.5 in \cite{Li} that
\begin{equation*}
H(\rho_1, \rho_2, \rho_3; k_1, a)=L\left(1/2+\rho_1, f \otimes \chi_m\right) L\left(1/2+\rho_2, f \otimes \chi_m\right)Y\left(\rho_1, \rho_2, \rho_3; k_1\right),
\end{equation*}
for
\begin{equation*}
\begin{aligned}
&\quad Y\left(\rho_1, \rho_2, \rho_3; k_1\right)\\
&=\frac{H_2(\rho_1, \rho_2, \rho_3)}{\zeta(1+\rho_1+\rho_2) L\left(1+2\rho_1, \operatorname{sym}^2 f\right)L\left(1+\rho_1+\rho_2, \operatorname{sym}^2 f\right) L\left(1+2\rho_2, \operatorname{sym}^2 f\right)}
\end{aligned}
\end{equation*}
where $m=k_1$ if $k_1\equiv1(\bmod 4)$ and $m=4k_1$ if $k_1\equiv2, 3(\bmod 4)$, $H_2(\rho_1, \rho_2, \rho_3)$ is analytic in the region $\Re \rho_1, \Re \rho_2\geq-\delta_0/2$ and $\Re\rho_3\geq1/2+\delta_0$ for any $0<\delta_0<1/3$. Moreover, in the same region, $H_2(\rho_1, \rho_2, \rho_3)\ll \tau(a)$ where the implied constant may depend on $\delta_0$ and $f$.\par
We may write a multiple Dirichlet series for $Y$ of the form
\begin{equation*}
Y\left(\rho_1, \rho_2, \rho_3; k_1\right)=\sum_{r_1, r_2, r_3}\frac{C(r_1, r_2, r_3)}{r_{1}^{\rho_1}r_{2}^{\rho_2}r_{3}^{2\rho_3}}.
\end{equation*}
It follows from the inverse Mellin transform that
\begin{equation*}
\begin{aligned}
&\frac{1}{(2\pi i)^2}\int_{(2)}\int_{(2)}\widetilde{G}(w_1)\widetilde{G}(w_2)N_{1}^{w_1}N_{2}^{w_2}H\left(\tfrac{1}{2}-s+it_1+w_1, \tfrac{1}{2}-s+it_2+w_2, s; k_1, a\right)dw_1dw_2\\
&=\frac{1}{(2\pi i)^2}\int_{(2)}\int_{(2)}\sum_{r_1, r_2, r_3}\frac{C(r_1, r_2, r_3)}{r_{1}^{\frac{1}{2}+it_1-s}r_{2}^{\frac{1}{2}+it_2-s}r_{3}^{2s}}\mathop{\sum\sum}_{n_1, n_2}\frac{\lambda_f(n_1)\chi_{m}(n_1)}{n_{1}^{1+it_1-s}}\frac{\lambda_f(n_2)\chi_{m}(n_2)}{n_{2}^{1+it_2-s}}\\
&\quad\times\widetilde{G}(w_1)\widetilde{G}(w_2)\left(\frac{r_1n_1}{N_1}\right)^{-w_1}\left(\frac{r_2n_2}{N_2}\right)^{-w_2}dw_1dw_2\\
&=\sum_{r_1, r_2, r_3}\frac{C(r_1, r_2, r_3)}{r_{1}^{\frac{1}{2}+it_1-s}r_{2}^{\frac{1}{2}+it_2-s}r_{3}^{2s}}\mathop{\sum\sum}_{n_1, n_2}\frac{\lambda_f(n_1)\chi_{m}(n_1)}{n_{1}^{1+it_1-s}}\frac{\lambda_f(n_2)\chi_{m}(n_2)}{n_{2}^{1+it_2-s}}G\left(\frac{r_1n_1}{N_1}\right)G\left(\frac{r_2n_2}{N_2}\right).
\end{aligned}
\end{equation*}
We apply a partition of unity to the sum over $r_1$, $r_2$. Then the above formula is equal to
\begin{equation*}
\begin{aligned}
&{\sum_{R_1, R_2}}^d\sum_{r_1, r_2, r_3}\frac{C(r_1, r_2, r_3)}{r_{1}^{\frac{1}{2}+it_1-s}r_{2}^{\frac{1}{2}+it_2-s}r_{3}^{2s}}G\left(\frac{r_1}{R_1}\right)G\left(\frac{r_2}{R_2}\right)\\
&\times\mathop{\sum\sum}_{n_1, n_2}
\frac{\lambda_f(n_1)\chi_{m}(n_1)}{n_{1}^{1+it_1-s}}\frac{\lambda_f(n_2)\chi_{m}(n_2)}{n_{2}^{1+it_2-s}}G\left(\frac{r_1n_1}{N_1}\right)G\left(\frac{r_2n_2}{N_2}\right)V\left(\frac{R_1n_1}{N_1}\right)V\left(\frac{R_2n_2}{N_2}\right)
\end{aligned}
\end{equation*}
where ${\sum_{R_1, R_2}}^d$ denotes a sum over $R_{i}=2^{t_i}$ for integer $t_i\geq0$. Since $G\left(\frac{r_in_i}{N_i}\right)$ restricts $\frac{3N_i}{4r_{i}}\leq n_i\leq\frac{2N_i}{r_i}$ while $G\left(\frac{r_i}{R_i}\right)$ restricts $\frac{3R_i}{4}\leq r_i\leq 2R_i$, the above holds for any $V$ which is identically 1 on $[3/8, 8/3]$. We set
\begin{equation*}
V(x)=G(4x)+G(2x)+G(x)+G(x/2)+G(x/4)
\end{equation*}
where $G$ is our fixed function satisfying (\ref{eq.V}) so that $V$ is identically $1$ on $[1/4, 6]$.\par
We once again apply Mellin inversion to separate variables inside $G\left(\frac{r_1n_1}{N_1}\right)G\left(\frac{r_2n_2}{N_2}\right)$, then
\begin{equation*}
\begin{aligned}
\mathbb{T}&={\sum_{R_1, R_2}}^d\frac{1}{(2\pi i)^3}\int_{(1/2)}\int_{(2)}\int_{(2)}\left({\sum_{|k_1|\leq K}}^*+{\sum_{|k_1|> K}}^*\right)\left(\frac{2q}{2^{1-\delta}\pi\theta Xk_1}\right)^s
\Gamma(s)\left(\cos+\sgn(k_1)\sin\right)\left(\frac{\pi s}{2}\right) \\
&\quad\times\widetilde{J}(u+2\gamma+1-s)\sum_{r_1, r_2, r_3}\frac{C(r_1, r_2, r_3)}
{r_{1}^{\frac{1}{2}+it_1-s+w_1}r_{2}^{\frac{1}{2}+it_2-s+w_2}r_{3}^{2s}}G\left(\frac{r_1}{R_1}\right)G\left(\frac{r_2}{R_2}\right)\widetilde{G}(w_1)\widetilde{G}(w_2)\\
&\quad\times\mathop{\sum\sum}_{n_1, n_2}
\frac{\lambda_f(n_1)\chi_{m}(n_1)}{n_{1}^{1+it_1-s+w_1}}\frac{\lambda_f(n_2)\chi_{m}(n_2)}{n_{2}^{1+it_2-s+w_2}}V\left(\frac{R_1n_1}{N_1}\right)V\left(\frac{R_2n_2}{N_2}\right)N_{1}^{w_1}N_{2}^{w_2}dw_1dw_2ds.
\end{aligned}
\end{equation*}
Changing $-s+w_1$ to $w_1$ and $-s+w_2$ to $w_2$. We have that $\mathbb{T}$ is a finite sum of terms of the form
\begin{equation}\label{eq.5.14}
\begin{aligned}
&{\sum_{R_1, R_2}}^d\frac{1}{(2\pi i)^3}\int_{(\frac{1}{2})}\int_{(\frac{3}{2})}\int_{(\frac{3}{2})}\left({\sum_{|k_1|\leq K}}^*+{\sum_{|k_1|> K}}^*\right)
\left(\frac{2q}{2^{1-\delta}\pi\theta Xk_1}\right)^s \Gamma(s)\left(\cos+\sgn(k_1)\sin\right)\left(\frac{\pi s}{2}\right)\\
&\quad\times\widetilde{J}(u+2\gamma+1-s)\sum_{r_1, r_2, r_3}\frac{C(r_1, r_2, r_3)}
{r_{1}^{\frac{1}{2}+it_1+w_1}r_{2}^{\frac{1}{2}+it_2+w_2}r_{3}^{2s}}G\left(\frac{r_1}{R_1}\right)G\left(\frac{r_2}{R_2}\right)\widetilde{G}(w_1+s)\widetilde{G}(w_2+s)\\
&\quad\times\mathop{\sum\sum}_{n_1, n_2}
\frac{\lambda_f(n_1)\chi_{m}(n_1)}{n_{1}^{1+it_1+w_1}}\frac{\lambda_f(n_2)\chi_{m}(n_2)}{n_{2}^{1+it_2+w_2}}G\left(\frac{n_1}{M_1}\right)G\left(\frac{n_2}{M_2}\right)N_{1}^{w_1+s}N_{2}^{w_2+s}dw_1dw_2ds
\end{aligned}
\end{equation}
where $M_i\asymp N_i/R_i$.

Suppose $\Re s\geq3/5$ and write $w_1=-1/2+iw_{1}'$, $w_2=-1/2+iw_{2}'$ for real $w_{1}'$, $w_{2}'$. It is known from \cite[Lemma 5.7]{Li} that
\begin{equation}\label{eq,5.7}
\begin{aligned}
&\left|\sum_{r_1, r_2, r_3}\frac{C(r_1, r_2, r_3)}
{r_{1}^{\frac{1}{2}+it_1+w_1}r_{2}^{\frac{1}{2}+it_2+w_2}r_{3}^{2s}}G\left(\frac{r_1}{R_1}\right)G\left(\frac{r_2}{R_2}\right)\right|\\
&\ll a^{\varepsilon}(1+|t_1|)^{1/10}(1+|t_2|)^{1/10}(1+|w_1'|)(1+|w_2'|)\exp(-c_1\sqrt{\log(R_{1}R_{2})}),
\end{aligned}
\end{equation}
where the implied constant and $c_1>0$ may depend on $f$.

$T(N_1, N_2; \alpha, t_1, t_2, u, \gamma)$ is a finite sum of terms of the form (\ref{eq.5.14}). When $|k_1|\leq K$, moving the contour of integration in $s$, $w_1$, $w_2$ to $\Re s=3/5$, $\Re w_1=-1/2$, $\Re w_2=-1/2$. When $|k_1|> K$, moving the contour of integration in $s$, $w_1$, $w_2$ to $\Re s=6/5$, $\Re w_1=-1/2$, $\Re w_2=-1/2$. We only give the proof for $|k_1|\leq K$, the case $|k_1|> K$ is analogous to the case $|k_1|\leq K$.\par
Using that $\Gamma(s)\left(\cos+\sgn(k_1)\sin\right)\left(\frac{\pi s}{2}\right)\ll|s|^{\Re s-1/2}$ and (\ref{eq,5.7}) we have that the $|k_1|\leq K$ part of (\ref{eq.5.14}) is
\begin{equation}\label{eq.5.20}
\begin{aligned}
&\ll a^{\varepsilon}\left(\frac{q}{2^{1-\delta}\pi\theta X}\right)^{\frac{3}{5}}(N_1N_2)^{\frac{1}{10}}(1+|t_1|)^{\frac{1}{10}}(1+|t_2|)^{\frac{1}{10}}{\sum_{R_1, R_2}}^d\exp(-c_1\sqrt{\log (R_{1}R_{2})})\\
&\quad\times\int_{-\infty}^{+\infty}\int_{-\infty}^{+\infty}\frac{1}{(1+|w_1'|)^{10}(1+|w_2'|)^{10}}{\sum_{|k_1|\leq K}}^*\frac{1}{|k_1|^{3/5}}\\
&\quad\times\left|\sum_{n_1}\frac{\lambda_f(n_1)\chi_{m}(n_1)}{n_{1}^{\frac{1}{2}+it_1+iw_1'}}G\left(\frac{n_1}{M_1}\right)\sum_{n_2}\frac{\lambda_f(n_2)\chi_{m}(n_2)}{n_{2}^{\frac{1}{2}+it_2+iw_2'}}G\left(\frac{n_2}{M_2}\right)\right|dw_1'dw_2'.
\end{aligned}
\end{equation}
We split the sum in $k_1$ into dyadic intervals of the form $\mathcal{K}_1\leq|k_1|<2\mathcal{K}_1$, by Lemma \ref{lem.SMNt} and Cauchy--Schwarz inequality, we have
\begin{equation*}
\begin{aligned}
&\quad{\sum_{\mathcal{K}_1\leq|k_1|<2\mathcal{K}_1}}^*\left|\sum_{n_1}\frac{\lambda_f(n_1)\chi_{m}(n_1)}{n_{1}^{\frac{1}{2}+it_1+iw_1'}}G\left(\frac{n_1}{M_1}\right)\right|
\left|\sum_{n_2}\frac{\lambda_f(n_2)\chi_{m}(n_2)}{n_{2}^{\frac{1}{2}+it_2+iw_2'}}G\left(\frac{n_2}{M_2}\right)\right|\\
&\leq{\sum_{\mathcal{K}_1\leq|m|<8\mathcal{K}_1}}^\flat\left|\sum_{n_1}\frac{\lambda_f(n_1)\chi_{m}(n_1)}{n_{1}^{\frac{1}{2}+it_1+iw_1'}}G\left(\frac{n_1}{M_1}\right)\right|
\left|\sum_{n_2}\frac{\lambda_f(n_2)\chi_{m}(n_2)}{n_{2}^{\frac{1}{2}+it_2+iw_2'}}G\left(\frac{n_2}{M_2}\right)\right|\\
&\ll(1+|t_1+w_1'|)^{\frac{3}{2}}(1+|t_2+w_2'|)^{\frac{3}{2}}\mathcal{K}_1\log^{1/2}(2+|t_1+w_1'|)\log^{1/2}(2+|t_2+w_2'|)\\
&\ll\mathcal{K}_1(1+|t_1|)^{\frac{16}{10}}(1+|t_2|)^{\frac{16}{10}}(1+|w_1'|)^2(1+|w_2'|)^2.
\end{aligned}
\end{equation*}
An application of the Abel summation formula, together with the above estimate, yields the bound
\begin{equation*}
\begin{aligned}
&{\sum_{|k_1|\leq K}}^*\frac{1}{|k_1|^{\frac{3}{5}}}\left|\sum_{n_1}\frac{\lambda_f(n_1)\chi_{m}(n_1)}{n_{1}^{\frac{1}{2}+it_1+iw_1'}}G\left(\frac{n_1}{M_1}\right)\right|
\left|\sum_{n_2}\frac{\lambda_f(n_2)\chi_{m}(n_2)}{n_{2}^{\frac{1}{2}+it_2+iw_2'}}G\left(\frac{n_2}{M_2}\right)\right|\\
&\ll K^{\frac{2}{5}}(1+|t_1|)^{\frac{16}{10}}(1+|t_2|)^{\frac{16}{10}}(1+|w_1'|)^2(1+|w_2'|)^2.
\end{aligned}
\end{equation*}
Substituting this estimate into \eqref{eq.5.20}, we obtain a bound of
\begin{equation*}
\ll\frac{qa^\varepsilon\sqrt{N_1N_2}}{\theta X}(1+|t_1|)^{2}(1+|t_2|)^{2},
\end{equation*}
where we have estimated ${\sum_{R_1, R_2}}^d\exp(-c_1\sqrt{\log (R_{1}R_{2})})=\sum_{h_1, h_2\geq0}\exp(-c_1\sqrt{\log 2}\sqrt{h_1+h_2})=\sum_{h\geq0}(h+1)\exp(-c_1\sqrt{\log 2}\sqrt{h})\ll 1$. This completes the proof of Lemma \ref{lem.TN1N2}.
\end{proof}
According to Lemma \ref{lem.6.4} and Lemma \ref{lem.TN1N2} ($\theta=2^{l}/8a^2$), we can obtain
\begin{equation*}
T_l\ll\frac{qa^{2+\varepsilon}}{2^lX}(UX)^{1/2}.
\end{equation*}
Recall that $U=X/(\log X)^{1000}$ and $Y_1=Y_2=(\log X)^{20}$, then
\begin{align*}
&\quad\mathcal{N}(\beta, -\alpha, 8d)(k\neq0),\;\mathcal{N}(-\beta, \alpha, 8d)(k\neq0)\\
&\ll X\sum_{a\leq Y_1}\frac{1}{a^2}\sum_{l\geq0}|T_l|\ll X\sum_{a\leq Y_1}\frac{qa^{\varepsilon}}{X}(UX)^{1/2}
\ll_{q}Y_{1}^{1+\varepsilon}(UX)^{1/2}\ll_{q}X.
\end{align*}
In the same way ($\theta=2^{l}/8a^2b$), it follows that
\begin{equation*}
\begin{aligned}
&\quad\mathcal{N}(\beta, \alpha, 8d)(k\neq0),\; \mathcal{N}(-\beta, -\alpha, 8d)(k\neq0)\\
&\ll X\sum_{a\leq Y_1}\frac{1}{a^2}\sum_{\substack{b\mid q\\ b\leq Y_2}}\frac{1}{b}\frac{a^{2+\varepsilon}b}{X}(UX)^{1/2}\ll_{q}Y_{1}^{1+\varepsilon}(UX)^{1/2}\ll_{q}X.
\end{aligned}
\end{equation*}
Therefore, the off-diagonal part can be absorbed by $O_q(X)$. Theorem \ref{thm3.1} follows from combining Proposition \ref{Pro3.2} and Proposition \ref{Pro3.3}.

\section{The proofs of Theorem 1.2 and Theorem 1.3}
According to Theorem \ref{thm3.1}, we have the following result. This result is crucial to the proofs of Theorem \ref{thm1} and Theorem \ref{thm2}.
\begin{theorem}\label{thm4.1}
Let $J(x): (0, \infty)\rightarrow\mathbb{C}$ be a smooth function compactly supported in $[1/2, 2]$. For any normalized primitive form $f$ with odd level $q$ and weight $2$,
any $l_1, l_2\in\mathbb{Z}_{\geq0}$, we have
\begin{equation*}
\begin{aligned}
&{\sum_{(d, 2q)=1}}^*L^{(l_1)}\left(\tfrac{1}{2}, f\otimes\chi_{8d}\right)L^{(l_2)}\left(\tfrac{1}{2}, f\otimes\chi_{8d}\right)J\left(\frac{8d}{X}\right)\\
&=\mathcal{C}_{l_1+l_2+1}X(\log X)^{l_1+l_2+1}+O_{q, l_1, l_2}(X(\log\log X)^{4}(\log X)^{l_1+l_2}),
\end{aligned}
\end{equation*}
the leading coefficient $\mathcal{C}_{l_1+l_2+1}$ is
\begin{equation}\label{eq.xishu}
\begin{aligned}
&\quad\frac{(-2)^{l_1+l_2+1}}{2\pi^2}\frac{-1}{l_1+l_2+1}\widetilde{J}(1)L^3(1, \operatorname{sym}^2 f)Z_1^*\left(\tfrac{1}{2}, \tfrac{1}{2}\right)
+\widetilde{J}(1)L^3(1, \operatorname{sym}^2 f)(-\eta)Z_q^*\left(\tfrac{1}{2}, \tfrac{1}{2}\right)\\
&\times\frac{(-2)^{l_1+l_2+1}}{2\pi^2}\left\{\frac{(-1)^{l_2+1}-1}{2}+\frac{(-1)^{l_2}-1}{2}\right\}B(l_1+1, l_2+1)
\end{aligned}
\end{equation}
and the big-$O$ is depending on $J$, $q$, $l_1$, $l_2$. Here, $\widetilde{J}(\omega)=\int_{0}^{+\infty}J(x)x^{\omega-1}dx$ is the Mellin transform of $J$, $Z_{q'}^*(1/2, 1/2)$ is defined in (\ref{eq.z_q'}), $B(l_1+1, l_2+1)$ is the beta function.
\end{theorem}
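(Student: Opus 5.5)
The plan is to obtain Theorem \ref{thm4.1} from Theorem \ref{thm3.1} by differentiating in the shifts. Write $\mathcal{S}(\alpha,\beta)$ for the left side of \eqref{eq.LL}. Since $L(\tfrac12+\alpha, f\otimes\chi_{8d})$ is entire in $\alpha$, we have
\begin{equation*}
\partial_\alpha^{l_1}\partial_\beta^{l_2}\mathcal{S}(\alpha,\beta)\big|_{\alpha=\beta=0}={\sum_{(d,2q)=1}}^*L^{(l_1)}\left(\tfrac12, f\otimes\chi_{8d}\right)L^{(l_2)}\left(\tfrac12, f\otimes\chi_{8d}\right)J\left(\frac{8d}{X}\right).
\end{equation*}
We realize these derivatives through Cauchy's integral formula on small circles $|\alpha|=|\beta|=r$ with $r\asymp 1/\log X$, say $r=1/(2\log X)$, so that the hypothesis $|\alpha|,|\beta|\le 1/\log X$ of Theorem \ref{thm3.1} holds on the contours. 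The step that needs care is uniformity: Theorem \ref{thm3.1} has to hold uniformly in $\alpha,\beta$ on these circles, and the right side of \eqref{eq.LL} has to be holomorphic there. The latter is plausible because the poles $1/(\alpha\pm\beta)$ coming from $\zeta(1+\epsilon_1\alpha+\epsilon_2\beta)$ and from $g/(-\epsilon_1\alpha-\epsilon_2\beta)$ cancel in the sum over $\epsilon_1,\epsilon_2$ (the ratios-conjecture structure). With uniformity in hand, the error $O(X(\log\log X)^4)$ contributes $O(X(\log\log X)^4 r^{-l_1-l_2})=O(X(\log\log X)^4(\log X)^{l_1+l_2})$ after the Cauchy estimate, which is exactly the stated error.

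For the main term, we expand each piece near $\alpha=\beta=0$ on the scale $1/\log X$. Set $\alpha=a/\log X$ and $\beta=b/\log X$. Then $X^{-2\alpha}=e^{-2a}$ and $U^{-(\alpha+\beta)}=e^{-(a+b)}(1+O(\log\log X/\log X))$. The factors $\gamma_\alpha$, $\widetilde J(1-2\delta_1\alpha-\cdots)$, $L(1+\cdot,\operatorname{sym}^2f)$ and $Z^*$ are $1+O(1/\log X)$ times their values at $0$. The only singular factors are $\zeta(1+u+v)\approx 1/(u+v)$ and $g(\cdot)/(\cdot)$, each of size $\log X$. The main term is therefore $X\log X\cdot\Phi(a,b)+O(X)$, where $\Phi$ is an explicit entire combination of exponentials and rational functions.

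Differentiating $l_1$ times in $\alpha$ and $l_2$ times in $\beta$ gives $(\log X)^{l_1+l_2}\partial_a^{l_1}\partial_b^{l_2}\Phi(0,0)$. The lower-order pieces are again handled by Cauchy estimates, since they are $O(X(\log\log X))$ uniformly on the circles.

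For the leading constant we evaluate $\partial_a^{l_1}\partial_b^{l_2}\Phi(0,0)$ separately for the two families.
\begin{itemize}
\item[$\bullet$] The $\epsilon_1=\epsilon_2$ terms ($q'=1$) combine into an expression of the form $\frac{1-e^{-2(a+b)}}{a+b}$, up to the $U$-shift. Using the identity
\begin{equation*}
\frac{1-e^{-z}}{z}=\int_0^1 e^{-zt}\,dt,
\end{equation*}
their derivatives produce $\frac{(-2)^{l_1+l_2+1}}{l_1+l_2+1}$ up to sign, which gives the first term of \eqref{eq.xishu}.
\item[$\bullet$] The $\epsilon_1\ne\epsilon_2$ terms carry the factor $-\eta$ and $Z_q^*$, and combine into an expression of the form $\frac{e^{-2a}-e^{-2b}}{b-a}$. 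Writing it as
\begin{equation*}
\frac{e^{-2a}-e^{-2b}}{b-a}=2\int_0^1 e^{-2(ta+(1-t)b)}\,dt
\end{equation*}
turns the mixed derivatives into $\int_0^1 t^{l_1}(1-t)^{l_2}\,dt=B(l_1+1,l_2+1)$, with the parity factors $\frac{(-1)^{l_2+1}-1}{2}$ and $\frac{(-1)^{l_2}-1}{2}$ coming from the two orderings $(\epsilon_1,\epsilon_2)=(1,-1),(-1,1)$.
\end{itemize}
We check that the $U^{-(\cdot)}W$ terms cancel the $(\log\log X)$-sized parts and do not affect the leading constant, since $U$ and $X$ agree at the $\log X$ scale. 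Finally we verify that the $\alpha,\beta$-dependence of $L(1\pm2\alpha,\operatorname{sym}^2 f)$ only enters lower order, so the constant involves $L^3(1,\operatorname{sym}^2f)Z^*(\tfrac12,\tfrac12)$ as stated.

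The main obstacle is this bookkeeping: confirming that the singularities at $\alpha=\pm\beta$ cancel exactly, so that Cauchy's formula applies on circles of radius $\asymp1/\log X$, and tracking the signs to get precisely \eqref{eq.xishu}. The first thing I would try is to write the whole main term as a single contour integral $\frac{1}{2\pi i}\oint\frac{X^{s}\cdots}{(s+\alpha+\cdots)(s-\cdots)}\,ds$, in the style of Soundararajan--Young \cite{SY}, where holomorphy in $\alpha,\beta$ is manifest.
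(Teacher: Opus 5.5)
Your proposal is correct and follows the paper's outline: differentiate the shifted moment of Theorem~\ref{thm3.1} at $\alpha=\beta=0$, and bound the derivatives of the error term by Cauchy's formula on polydiscs of radius $\asymp 1/\log X$.

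You depart from the paper in how you extract $\mathcal{C}_{l_1+l_2+1}$. The paper proceeds as follows:
\begin{itemize}
\item it writes the $Z_{q'}$-part of the main term as $A(\alpha,\beta;J)/((\alpha+\beta)(\alpha-\beta))$;
\item it invokes Sono's combinatorial formula for mixed derivatives of such a quotient in terms of the derivatives of $A$;
\item it argues that the top power of $\log X$ comes from derivatives landing on $X^{-2\delta_1\alpha-2\delta_2\beta}$;
\item it disposes of the $W$-terms as ``handled in the same way'' and records \eqref{eq.xishu} without carrying out the computation.
\end{itemize}
You instead rescale $\alpha=a/\log X$, $\beta=b/\log X$. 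You show that, up to a holomorphic remainder of size $O(X\log\log X)$, the main term equals
\[
\frac{X\log X}{2\pi^2}\,\widetilde{J}(1)L^3(1,\operatorname{sym}^2 f)\Bigl[Z_1^*(\tfrac12,\tfrac12)\,\frac{1-e^{-2(a+b)}}{a+b}+(-\eta)Z_q^*(\tfrac12,\tfrac12)\,\frac{e^{-2a}-e^{-2b}}{b-a}\Bigr].
\]
Your integral representations then give $-(-2)^{l_1+l_2+1}/(l_1+l_2+1)$ and $-(-2)^{l_1+l_2+1}B(l_1+1,l_2+1)$ for the $(l_1,l_2)$-derivatives of the two bracketed functions, and this reproduces \eqref{eq.xishu} exactly.

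What your route buys is transparency. It shows where $1/(l_1+l_2+1)$ and the Beta function come from. It also gives an explicit reason the $U$-dependent $W$-terms do not affect the leading constant: their leading parts combine to $X^{-\alpha-\beta}\bigl((\log X)^{-As}-(\log X)^{As}\bigr)/s$, with $s=\alpha+\beta$ for $\epsilon_1=\epsilon_2$ and $s=\alpha-\beta$ for $\epsilon_1\neq\epsilon_2$, and this is $O(\log\log X)$. The paper's route in principle yields the whole polynomial in $\log X$, but it leaves the constant implicit.

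Three small corrections.
\begin{itemize}
\item On your torus $|\alpha|=|\beta|=1/(2\log X)$ the lines $\alpha=\pm\beta$ meet the contour. So the termwise claim that $1/(u+v)$ ``has size $\log X$'' is not uniform there. Instead bound the holomorphic remainder on a torus with unequal radii, e.g.\ $|\alpha|=1/(2\log X)$, $|\beta|=1/\log X$, where $|\alpha\pm\beta|\ge 1/(2\log X)$, and then apply the maximum principle or Cauchy's formula.
\item Holomorphy of the main term is immediate, not merely plausible. Each $R(\epsilon_1\alpha,\epsilon_2\beta)$ is already regular across $\epsilon_1\alpha+\epsilon_2\beta=0$, because the pole of $\zeta(1+\epsilon_1\alpha+\epsilon_2\beta)$ in $Z_{q'}$ cancels the pole of $g(\cdot)/(\cdot)$ in the $U$-term.
\item The bracket in \eqref{eq.xishu} is identically $-1$, so there is no parity bookkeeping to do. In particular it does not arise from the two orderings separately: $(1,-1)$ and $(-1,1)$ contribute the pieces $t\in[0,\tfrac12]$ and $t\in[\tfrac12,1]$ of your integral $2\int_0^1 e^{-2(ta+(1-t)b)}\,dt$, which are incomplete Beta values rather than parity multiples of $B(l_1+1,l_2+1)$.
\end{itemize}
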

\begin{proof}
By \eqref{eq.W} and (\ref{eq.z_q'}), the main terms in (\ref{eq.LL}) contain the apparent singularities $\alpha\pm\beta=0$. Since the $L$-function is entire, these singularities are necessarily cancelled.
Put
\begin{equation}\label{eq.A}
\sum_{\epsilon_1, \epsilon_2\in\{\pm1\}}
\gamma_{\alpha}^{\delta_1}\gamma_{\beta}^{\delta_2}X^{-2\delta_1\alpha-2\delta_2\beta}\widetilde{J}(1-2\delta_1\alpha-2\delta_2\beta)
Z_{q'}(\epsilon_1\alpha, \epsilon_2\beta)=\frac{A(\alpha, \beta; J)}{(\alpha+\beta)(\alpha-\beta)}.
\end{equation}
Then, since the left side of (\ref{eq.A}) is entire in a neighborhood of $\alpha=\pm\beta$, $A(\alpha, \beta; J)$ is analytic in a neighborhood of $\alpha=\pm\beta$ and divisible by $(\alpha+\beta)(\alpha-\beta)$.
Assume that the function $A(\alpha, \beta; J)=\sum_{m, n\geq0}c_{m, n}\alpha^m\beta^n$ is divisible by $(\alpha+\beta)(\alpha-\beta)$.
Then $\frac{\partial^{l_1+l_2}}{\partial\alpha^{l_1}\partial\beta^{l_2}}\frac{A(\alpha, \beta; J)}{(\alpha+\beta)(\alpha-\beta)}|_{\alpha=\beta=0}$ is given by
\begin{equation*}
\sum_{n=0}^{l_2}\sum_{k=0}^{n}\sum_{m=0}^{l_1}\left(\begin{matrix}l_2\\n\end{matrix}\right)\left(\begin{matrix}n\\k\end{matrix}\right)\left(\begin{matrix}l_1\\m\end{matrix}\right)
(-1)^{l_1+l_2-n-m}\frac{(n-k)!(l_2-n)!(l_1+l_2+2-k)!k!}{(l_2-k+1)!(l_1+l_2+2-k-m)}c_{l_1+l_2+2-k, k}.
\end{equation*}
The proof process refers to the Lemma 2.3 in \cite{Son}. Hence, we have
{\small\begin{equation}\label{eq.2.10}
\begin{aligned}
&\quad\frac{\partial^{l_1+l_2}}{\partial\alpha^{l_1}\partial\beta^{l_2}}\left\{\sum_{\epsilon_1, \epsilon_2\in\{\pm1\}}
\gamma_{\alpha}^{\delta_1}\gamma_{\beta}^{\delta_2}X^{-2\delta_1\alpha-2\delta_2\beta}\widetilde{J}(1-2\delta_1\alpha-2\delta_2\beta)
Z_{q'}(\epsilon_1\alpha, \epsilon_2\beta)\right\}_{\alpha=\beta=0}\\
&=\sum_{n=0}^{l_2}\sum_{k=0}^{n}\sum_{m=0}^{l_1}\left(\begin{matrix}l_2\\n\end{matrix}\right)\left(\begin{matrix}n\\k\end{matrix}\right)\left(\begin{matrix}l_1\\m\end{matrix}\right)
(-1)^{l_1+l_2-n-m}\frac{(n-k)!(l_2-n)!(l_1+l_2+2-k)!k!}{(l_2-k+1)!(l_1+l_2+2-k-m)}\\
&\quad\times\frac{1}{(l_1+l_2+2-k)!k!}\frac{\partial^{l_1+l_2+2}}{\partial\alpha^{l_1+l_2+2-k}\partial\beta^k}A(\alpha, \beta; J)\lvert_{\alpha=\beta=0}\\
&=\sum_{n=0}^{l_2}\sum_{k=0}^{n}\sum_{m=0}^{l_1}\left(\begin{matrix}l_2\\n\end{matrix}\right)\left(\begin{matrix}n\\k\end{matrix}\right)\left(\begin{matrix}l_1\\m\end{matrix}\right)
(-1)^{l_1+l_2-n-m}\frac{(n-k)!(l_2-n)!}{(l_2-k+1)!(l_1+l_2+2-k-m)}\\
&\quad\times\frac{\partial^{l_1+l_2+2}}{\partial\alpha^{l_1+l_2+2-k}\partial\beta^k}\left\{(\alpha^2-\beta^2)\sum_{\epsilon_1, \epsilon_2\in\{\pm1\}}
\gamma_{\alpha}^{\delta_1}\gamma_{\beta}^{\delta_2}X^{-2\delta_1\alpha-2\delta_2\beta}\widetilde{J}(1-2\delta_1\alpha-2\delta_2\beta)
Z_{q'}(\epsilon_1\alpha, \epsilon_2\beta)\right\}_{\alpha=\beta=0}.
\end{aligned}
\end{equation}}
We see that $\frac{\partial^{i+j}}{\partial\alpha^i\partial\beta^j}X^{-2\delta_1\alpha-2\delta_2\beta}$ is asymptotically equal a polynomial of $\log X$ of degree $i+j$. Further, it should be mentioned that when we apply the Leibniz rule to compute the derivatives in the right hand side of (\ref{eq.2.10}), it follows that
\begin{equation*}
\left[(\alpha^2-\beta^2)\sum_{\epsilon_1, \epsilon_2\in\{\pm1\}}
\gamma_{\alpha}^{\delta_1}\gamma_{\beta}^{\delta_2}\widetilde{J}(1-2\delta_1\alpha-2\delta_2\beta)
Z_{q'}(\epsilon_1\alpha, \epsilon_2\beta)\frac{\partial^{l_1+l_2+2}}{\partial\alpha^{l_1+l_2+2-k}\partial\beta^k}X^{-2\delta_1\alpha-2\delta_2\beta}\right]_{\alpha=\beta=0}=0,
\end{equation*}
because $(\alpha^2-\beta^2)Z_{q'}(\epsilon_1\alpha, \epsilon_2\beta)$ is divisible by at least one of $\alpha+\beta$ and $\alpha-\beta$. Similarly, the $W_{q', \epsilon_2\beta}$-term in (\ref{eq.LL}) can be handled in the same way. Hence by (\ref{eq.LL}), (\ref{eq.2.10}) and the above reason, it follows that
\begin{equation*}
\begin{aligned}
&{\sum_{(d, 2q)=1}}^*L^{(l_1)}\left(\tfrac{1}{2}, f\otimes\chi_{8d}\right)L^{(l_2)}\left(\tfrac{1}{2}, f\otimes\chi_{8d}\right)J\left(\frac{8d}{X}\right)\\
&=\mathcal{C}_{l_1+l_2+1}X(\log X)^{l_1+l_2+1}+O_{q, l_1, l_2}(X(\log\log X)^{4}(\log X)^{l_1+l_2}),
\end{aligned}
\end{equation*}
where $\mathcal{C}_{l_1+l_2+1}$ is a constant which is dependent of $J$. We can calculate $\mathcal{C}_{l_1+l_2+1}$ as shown in formula (\ref{eq.xishu}).
Note that the error term in Theorem \ref{thm3.1} is holomorphic on the disc centred at $(0, 0)$ with radius $\ll\frac{1}{\log X}$. Hence the size of the derivative of the error term is $O_{q, l_1, l_2}(X(\log\log X)^{4}(\log X)^{l_1+l_2})$ by Cauchy's integral formula.
\end{proof}
We prove Theorem \ref{thm1} and Theorem \ref{thm2} by Theorem \ref{thm4.1}. Theorem \ref{thm2} is a direct consequence of Theorem \ref{thm4.1}. For any integer $i\geq0$, if $f$ is a normalized primitive form of odd level $q$ and weight $2$, then Theorem \ref{thm4.1} and the work of Shen\cite{SQ} directly yield
\begin{equation}\label{eq.3.2}
\#\{d \leq X | \text{$d$ is a fundamental discriminant with $(d, q)=1$, }L^{(i)}(1/2, f\otimes\chi_d)\neq0\}\gg\frac{X}{\log X}.
\end{equation}
We now give the proof of Theorem \ref{thm1}.
\begin{proof} By the work of Shen\cite{SQ}, we have
\begin{equation}\label{EQ.L}
\begin{aligned}
&\quad{\sum_{(d, 2q)=1}}^*L\left(\tfrac{1}{2}+\alpha, f\otimes\chi_{8d}\right)J\left(\frac{8d}{X}\right)\\
&=\frac{X}{2\pi^2}\widetilde{J}(1)L(1+2\alpha, \operatorname{sym}^2 f)Z_1\left(\alpha\right)\\
&\quad+(-\eta)\frac{\gamma_\alpha X^{1-2\alpha}}{2\pi^2}\widetilde{J}(1-2\alpha)L(1-2\alpha, \operatorname{sym}^2 f)Z_q\left(-\alpha\right)+O_{q}(X^{\frac{1}{2}+\varepsilon}),
\end{aligned}
\end{equation}
where $q'=1, q$, the big-$O$ is depending on $\varepsilon$, $q$ and $J$. The symbol $\gamma_\alpha$ is shown in Theorem \ref{thm3.1}, and $Z_{q'}\left(\pm\alpha\right)$ is as follow
\begin{equation*}
L(1\pm2\alpha, \operatorname{sym}^2 f)Z_{q'}\left(\pm\alpha\right)
=\sum_{\substack{(n, 2)=1\\ q'n=\square}}\frac{\lambda_f(n)}{n^{\frac{1}{2}\pm\alpha}}\prod_{p\mid qn}\frac{p}{p+1},
\end{equation*}
where $Z_{q'}(\pm\alpha)$ is analytic and absolutely convergent in the region $\Re(\pm\alpha)\geq-1/4+\varepsilon$.
By taking the $i$-th derivative with respect to $\alpha$ and then letting $\alpha\rightarrow0$, we obtian
\begin{equation}\label{eq.1.5}
\begin{aligned}
&\quad{\sum_{(d, 2q)=1}}^*L^{(i)}\left(\tfrac{1}{2}, f\otimes\chi_{8d}\right)J\left(\frac{8d}{X}\right)\\
&=\frac{(-2)^i\times(-\eta)}{2\pi^2}\widetilde{J}(1)L(1, \operatorname{sym}^2 f)Z_{q}(0)X\log^i X+\sum_{j=0}^{i-1}C_{q}^jX(\log X)^j+O_{i, q}(X^{\frac{1}{2}+\varepsilon})
\end{aligned}
\end{equation}
for some constants $C_{q}^j$. Let $l_1=l_2=i$, according to Theorem \ref{thm4.1}, we have
\begin{equation}\label{eq.1.4}
{\sum_{(d, 2q)=1}}^*L^{(i)}\left(\tfrac{1}{2}, f\otimes\chi_{8d}\right)^2J\left(\frac{8d}{X}\right)=\mathcal{C}_{2i+1}X(\log X)^{2i+1}+O_{i}(X(\log\log X)^{4}(\log X)^{2i}).
\end{equation}
A similar method works for all discriminants $d$ as well.
If $f$ corresponds to an elliptic curve $E$, applying Cauchy--Schwarz inequality to (\ref{eq.1.5}) and (\ref{eq.1.4}), we have
\begin{equation*}
\left(\sum_{d\leq X}\left|L^{(i)}\left(E^{(d)}, 1\right)\right|\right)^2\leq\sum_{d\leq X}\left|L^{(i)}\left(E^{(d)}, 1\right)\right|^2
\sum_{\substack{d\leq X\\L^{(i)}\left(E^{(d)}, 1\right)\neq0}}1,
\end{equation*}
thus the proof of Theorem \ref{thm1} is completed.
\end{proof}

\bigskip

\noindent Tong Wei, {\it Research Center for Mathematics and Interdisciplinary Sciences, Shandong University, Qingdao, Shandong, China.}

\smallskip
\noindent {\it E-mail:} 202421344@mail.sdu.edu.cn

\bigskip

\noindent Shuai Zhai, {\it Mathematical Research Center, Shandong University, Jinan, Shandong, China.}

\smallskip
\noindent {\it E-mail:} zhai@sdu.edu.cn

\end{document}